%% file: gs_soc_spin1.tex
\documentclass[preprint,3p]{elsarticle}
\usepackage{stmaryrd}
\SetSymbolFont{stmry}{bold}{U}{stmry}{m}{n}
\usepackage{amsmath,bbm,mathbbol}
\usepackage{amssymb,amsfonts,amsthm}
\usepackage{mathrsfs,mathtools}
\usepackage{tabularx,lmodern}
\usepackage{color,soul,enumitem}
\usepackage{epsfig,epstopdf}
\usepackage{graphicx}
\usepackage{algorithm}
\usepackage{algorithmicx}
\usepackage{algpseudocode}
\usepackage{flushend}
\usepackage{verbatim}
\usepackage{caption}
\usepackage{subfigure}
\usepackage{arydshln}
\usepackage[level]{datetime}
\usepackage{float}
\usepackage{booktabs}
\usepackage{natbib}
\usepackage{relsize}
\usepackage{ulem}
\usepackage{setspace}
\usepackage{appendix}
\usepackage{bm}

\usepackage{hyperref} 
\hypersetup{
  	colorlinks=true,
  	citecolor=blue,
  	linkcolor=red,
  	urlcolor=green}

\renewcommand{\Re}{\text{Re}}
\renewcommand{\Im}{\text{Im}}

\allowdisplaybreaks[4]

\makeatletter

\newcommand{\Rmnum}[1]{\expandafter\@slowromancap\romannumeral #1@}
\makeatother

\makeatletter
\@addtoreset{equation}{section}
\makeatother

\begin{document}

\include{command}

\title{Mathematical and numerical study on the ground states of rotating spin-orbit coupled spin-1 Bose-Einstein condensates}

\author[muc]{Jing Wang}
\ead{wjing@muc.edu.cn}

\author[tju]{Wei Yang}
\ead{weiyanghn@tju.edu.cn}

\author[hunnu]{Yongjun Yuan}
\ead{yyj1983@hunnu.edu.cn}

\author[tju]{Yong Zhang\corref{cor1}}
\ead{Zhang\_Yong@tju.edu.cn}

\address[muc]{School of Science, Minzu University of China, Beijing, 100081, China}
\address[tju]{Center for Applied Mathematics and KL-AAGDM, Tianjin University, Tianjin, 300072, China}
\address[hunnu]{LCSM (MOE), School of Mathematics and Statistics, Hunan Normal University, Changsha, Hunan 410081, China}
\cortext[cor1]{Corresponding author.}

\begin{abstract}
	In this article, we study mathematically and numerically the ground states
	of three-component rotating spin-orbit coupled (SOC) spin-1 Bose-Einstein condensates modeled by the coupled Gross–Pitaevskii equations (CGPEs). 
	Firstly, we rigorously prove existence result of the ground state and derive some analytical properties, including the virial identity and negativity of SOC energy. 
	Secondly, we propose an efficient and accurate preconditioned nonlinear conjugate gradient (PCG) algorithm to compute the ground states. 
	We truncate the whole space into a bounded rectangular domain and readily apply the Fourier spectral method to approximate the wave function. 
	The PCG method is successfully adapted with appropriate modifications to the adaptive step size control strategy for the one-parameter energy minimization problem and to the choice of preconditioners, achieving great performance in terms of accuracy and efficiency.
	Lastly, we carry out extensive numerical experiments to verify the existence and property results of the ground states, confirm the spatial spectral accuracy by traversing the most commonly-used initial guesses for each component thanks to its great efficiency, which is also attributed to a utilization of cascadic multigrid and discrete Fast Fourier Transform (FFT). 
	Moreover, we investigate the effects of local interaction, rotation and spin-orbit coupling and external trapping potential on the ground state, and unveil some interesting physical phenomena, such as giant vortex and U-shape vortex line.
\end{abstract}

\begin{keyword}
	spin-1 Bose-Einstein condensate, spin-orbit coupling, ground state, Fourier spectral method, preconditioned nonlinear conjugate gradient method
\end{keyword}
  	
\maketitle
\tableofcontents

 \section{Introduction}
 Since its first experimental realizations in 1995 with a dilute alkali-metal gas \cite{bec-davis}, the Bose-Einstein condensate (BEC) has stimulated great excitement in the physical community and attracted significant interest in atomic and molecular \cite{molecular} as well as condensed matter physics. 
 Particularly, under a rotational frame, the rotating BECs \cite{vortexlattice,vortexnucleat} were created, which possess quantized vortices and are related to superfluid properties. 
 In fact, bulk superfluids are distinguished from normal fluids by their ability to support dissipationless flow. 
 In earlier BEC experiments, the atoms were confined in a magnetic trap \cite{bec-davis}, and thus the spin degrees of freedom were frozen.
 Nevertheless, recently developed optical trapping techniques have enabled to release the spin internal degrees of freedom, opening up a new research area of quantum many-body systems named spinor BEC \cite{spinor}, noticing that it is called a spin-$\mathcal{F}$ $(\mathcal{F} \in \mathbb{N})$ BEC if its spin quantum number is $\mathcal{F}$. 
 Meanwhile, the spin-orbit coupling (SOC) is actually the interaction between the spin and motion of a particle. 
 It has been found as the key to understand many fundamental physical phenomena in quantum systems, such as the topological insulator \cite{Hall}, and quantum spin Hall effects \cite{topology}. 
 Recently, the SOC was successfully induced in neutral atomic Bose-Einstein condensates by dressing two atomic spin states with a pair of lasers \cite{SOBEC}. 
 These experiments triggered a strong activity in the area of spin-orbit-coupled cold atoms and a number of exciting phenomena have been discovered. 
 Actually, the rotating SOC spin-1 BECs under different trapping potentials have been investigated \cite{rotSOCvortex2,rotSOCvortex}.
 
 Within the mean-field regime, at temperature lower than the critical temperature $T_c$, a spin-$\mathcal{F}$ condensate can be mathematically described by coupled Gross-Pitaevskii equations (CGPEs) which consist of $2\mathcal{F} + 1$ equations \cite{spinor,spinorRep}. 
 The macroscopic wave function $\Psi(\mathbf{x},t) = (\psi_1(\mathbf{x},t),\psi_0(\mathbf{x},t),\psi_{-1}(\mathbf{x},t))^\top$ of the rotating SOC spin-1 BEC system obeys the following d-dimensional ($d = 2$ or $3$) dimensionless CGPEs \cite{dy-soc-spin1,rotSOC}:
 \begin{align}
 	\label{GPE1}
 	i\partial_t \psi_1(\mathbf{x},t) &= \left[\mathcal{H}_0 + c_1 \left(\rho_0 + \rho_1 - \rho_{-1}\right)\right] \psi_1 + c_1 \bar{\psi}_{-1}\psi_0^2 - \gamma L_0\psi_0, \\ 
 	\label{GPE2}
 	i\partial_t \psi_0(\mathbf{x},t) &= \left[\mathcal{H}_0 + c_1 \left(\rho_1 + \rho_{-1}\right)\right] \psi_0 + 2 c_1 \psi_{-1} \bar{\psi}_0 \psi_1 - \gamma \left(L_0\psi_{-1}+L_1\psi_1\right), \\
 	\label{GPE3}
 	i\partial_t \psi_{-1}(\mathbf{x},t) &= \left[\mathcal{H}_0 + c_1 \left(\rho_0 + \rho_{-1} - \rho_1\right)\right] \psi_{-1} + c_1 \psi_0^2 \bar{\psi}_{1} - \gamma L_1\psi_0,
 \end{align}
 where $t$ is the time variable, $\mathbf{x} = (x, y)^\top$ if $d = 2$ and $\mathbf{x} = (x, y, z)^\top$ if $d = 3$. 
 The single-particle Hamiltonian is given as $\mathcal{H}_0 = -\frac{1}{2}\nabla^2 + V(\mathbf{x}) + c_0 \rho - \Omega L_z$ where $\nabla$ is the gradient operator with respect to spatial variables. 
 The mean-field interaction strength $c_0$ is positive for repulsive interaction and negative for attractive interaction.
 The spin-exchange interaction strength $c_1$ is positive for antiferromagnetic interaction and negative for ferromagnetic interaction. 
 $\Omega$ and $\gamma$ denote the rotation speed and spin-orbit coupling strength, respectively. 
 The density of the $\ell$-th $(\ell=1,0,-1)$ component is $\rho_{\ell}=|\psi_{\ell}|^2$, and $\rho=\rho_{-1}+\rho_0+\rho_1$ is the total particle density. 
 $L_z=-i\left(x \partial_y - y \partial_x \right)$ is the rotation operator, i.e., the $z$-component of the angular momentum. 
 $V(\mathbf{x})$ is the real-valued external trapping potential whose shape is determined by the type of system under investigation. 
 For instance, if a harmonic potential is considered, it takes the following form:
 \begin{equation}\label{harmonic}
 	V\left(\mathbf{x}\right) = \frac{1}{2}\left\{\begin{aligned}
 		&\gamma_x^2 x^2 + \gamma_y^2 y^2, \quad &d = 2,\\
 		&\gamma_x^2 x^2 + \gamma_y^2 y^2 + \gamma_z^2 z^2, &d=3,
 	\end{aligned}
 	\right.
 \end{equation}
 where $\gamma_x$, $\gamma_y$ and $\gamma_z$ are the dimensionless trapping frequencies in $x$-, $y$- and $z$-directions, respectively. 
 $L_0 = i \partial_x + \partial_y$ and $L_1 = i \partial_x - \partial_y$ are the spin-orbit coupling operators. 
 $\overline{\psi}_{\ell}$ is the complex conjugate of $\psi_{\ell}$.
 
 Introducing the spin-1 matrices $\mathbf{f} = (f_x,f_y,f_z)^\top$ as
 \[
 f_x=\frac{1}{\sqrt{2}}\begin{pmatrix}
 	0 & 1 & 0\\
 	1 & 0 & 1\\
 	0 & 1 & 0 
 \end{pmatrix}, \qquad
 f_y=\frac{i}{\sqrt{2}}\begin{pmatrix}
 	0 & -1 & 0\\ 
 	1 & 0  & -1\\
 	0 & 1  & 0 
 \end{pmatrix}, \qquad 
 f_z=\begin{pmatrix}
 	1 & 0 & 0\\
 	0 & 0 & 0\\
 	0 & 0 & -1 
 \end{pmatrix},
 \]
 the spin vector, defined as $\mathbf{F} = (F_x,F_y,F_z)^\top := (\Psi^\dagger f_x \Psi,\Psi^\dagger f_y \Psi,\Psi^\dagger f_z \Psi)^\top \in \mathbb{R}^3$ with $\Psi^\dagger$ being the conjugate transpose of complex-valued vector $\Psi:=(\psi_1,\psi_0,\psi_{-1})^\top$, is given explicitly as 
 \begin{align*}
 	&F_x=\frac{1}{\sqrt{2}}\left[\bar{\psi}_1\psi_0 + \bar{\psi}_0\left(\psi_1 + \psi_{-1}\right) + \bar{\psi}_{-1}\psi_0\right], \\
 	&F_y=\frac{i}{\sqrt{2}}\left[-\bar{\psi}_1\psi_0+\bar{\psi}_0\left(\psi_1-\psi_{-1}\right)+\bar{\psi}_{-1} \psi_0\right], \\
 	&F_z=\left|\psi_1\right|^2-\left|\psi_{-1}\right|^2.
 \end{align*}
 The spin vector $\mathbf{F}$ is real-valued thanks to the fact that all spin-1 matrices are Hermitian.
 The CGPEs \eqref{GPE1}-\eqref{GPE3} can be written in a compact form as
 \begin{equation}
 	i \partial_t \Psi = \left[\mathcal{H}_0 + c_1 \mathbf{F} \cdot \mathbf{f} - \gamma ~\mathbf{S} \right] \Psi,
 \end{equation}
 where
 \begin{eqnarray*}
 	\textbf{F}\cdot \textbf{f} =
 	\begin{psmallmatrix}
 		F_z 					& \frac{1}{\sqrt{2}}F_{-} & 0    \\
 		\frac{1}{\sqrt{2}}F_{+}	& 0						  & \frac{1}{\sqrt{2}}F_{-} \\
 		0						& \frac{1}{\sqrt{2}}F_{+} & -F_z
 	\end{psmallmatrix}, 
 	\qquad
 	\mathbf{S} =
 	\begin{pmatrix}
 		0   & L_0 & 0   \\
 		L_1 & 0   & L_0 \\
 		0   & L_1 & 0
 	\end{pmatrix},\quad \mbox{with} \quad F_{\pm} = F_x \pm i F_y.
 \end{eqnarray*}
 
 Two important invariants of \eqref{GPE1}-\eqref{GPE3} are the normalization (or mass) of the wave function
 \begin{equation}\label{mass}
 	\mathcal{N}\left(\Psi\right) = \Vert \Psi\left(\cdot, t\right) \Vert^2 := \int_{\mathbb{R}^d} \sum\limits_{\ell=-1}^1 \vert \psi_{\ell}(\mathbf{x}, t) \vert^2 \ \mathrm{d} \mathbf{x} \equiv 1, \quad t \geq 0,
 \end{equation}
 and the energy per particle
 \begin{align}\label{energy}
 	\mathcal{E}\left(\Psi(\cdot,t)\right) &:= \int_{\mathbb{R}^d}\left[\sum_{\ell=-1}^1 \bar{\psi}_{\ell}
 	\left(-\frac{1}{2}\nabla^2 + V(\mathbf{x}) - \Omega L_z \right)\psi_{\ell} + \frac{c_0}{2} \rho^2 + \frac{c_1}{2}\vert \mathbf{F} \vert^2 - \gamma \Psi^\dagger \mathbf{S} \Psi \right] \mathrm{d} \mathbf{x} \nonumber \\
 	&=\mathcal{E}\left(\Psi(\cdot,0)\right). 
 \end{align}
 The ground state of the rotating SOC spin-1 BEC \eqref{GPE1}-\eqref{GPE3} is defined as the minimizer of the following nonconvex minimization problem
 \begin{equation}\label{groundstate}
 	\Phi_{\rm g} = \arg\min_{\Phi \in \mathcal{M}} \mathcal{E}(\Phi),
 \end{equation}
 with
 \begin{equation}\label{sphere}
 	\mathcal{M} = \left\{\Phi = (\phi_1, \phi_0,\phi_{-1})^\top \mid \mathcal{N}(\Phi) = 1, ~ \mathcal{E}(\Phi)<\infty\right\}.
 \end{equation}

 There have been extensive mathematical and numerical studies on the ground states of single-component BECs. The gradient flow with discrete normalization (GFDN) method, also known as the imaginary time evolution method \cite{ITE,ITE_jcp}, was first proposed in \cite{GFDN_2004} to compute the ground states. 
 Later, some improved gradient flow methods were proposed, e.g., Danaila et al. proposed a new Sobolev gradient method for direct minimization of the Gross-Pitaevskii energy with rotation \cite{DK2010},
 and Liu et al. designed the gradient flow with Lagrange multiplier method \cite{GFLM}. 
 These gradient-flow type methods are effective in computing the ground states of non-rotating and slow-rotating BECs, 
 but do not perform ideally for fast-rotating BECs as efficiency is concerned.
 Recently, based on optimization theory on Riemannian manifolds \cite{matrixmanifold}, Danaila et al. combined Riemannian optimization theory and Sobolev gradients to propose the Riemannian steepest descent method and conjugate gradient method \cite{RCG}. Notably, Antoine et al. proposed a preconditioned nonlinear conjugate gradient (PCG) method \cite{pcg,BEC2HPC,action,pRCG} on the unit sphere manifold. 
 This method exhibits extremely high efficiency, even for rapidly rotating BECs, and has been extended to rotating dipolar BECs \cite{pcg-dbec}. 
 In addition, Chen et al. introduced an efficient second-order damped flow method \cite{secondflow} 
 by adding a second-order time derivative to the gradient flow, which performs exceptionally well in computing the ground states of single-component rotating BECs.

 For the spinor BECs, there has been some literature on either mathematical or numerical studies,
 and we refer the readers to \cite{spinorBEC} for a comprehensive review. However, so far as we know, there is little literature devoted to the ground state of spinor BECs with SOC term.
 Along the numerical front, Bao et al. applied the GFDN method and introduced a third normalization condition \cite{spinorBEC,spin1charact,spin1_GFDN}. 
 Recently, Cai et al. extended the gradient flow with Lagrange multiplier to general spinor cases \cite{spinGFLM}. 
 For SOC spin-1 BEC without rotation, the ground states have been numerically studied in \cite{FORTRESS,SOCspin1}.

 Recently, there is a growing interest in studying the properties of rotating SOC spin-1 BECs \cite{rotsoc3,rotsoc2,rotSOC,rotsoc4}. 
 So far as we know, there are quite few mathematical and numerical studies on the ground states of spin-1 BECs 
 with both the rotation and SOC terms.
 Two major challenges arise therein. 
 First, the presence of rotation term narrows the so-called fundamental gap, i.e., the energy difference between the ground state and the first excited state,
 especially for fast rotating BECs, and it causes a significant efficiency degradation or even accuracy saturation for gradient-flow type methods. 
 Second, for both fast-rotating and strong SOC BEC, the ground states exhibit many fine structures, 
 which shall require very high accuracy and a reasonable efficiency at the same time in numerical simulations. 
 To this end, we adapt the PCG method to develop an efficient and accurate numerical scheme for computing the ground states of rotating SOC spin-1 BECs.

 \
 
 The primary objectives of this paper are threefold:

 \begin{itemize}[itemsep=0.5em, topsep=0.3em, parsep=0.3em]
 	\item[(1)] to prove the existence and derive some properties of the ground states, including the virial identity and negativity of SOC energy.
 	\item[(2)] to adapt PCG method by integrating the Fourier spectral method, optimizing an adaptive step size control strategy, developing good preconditioning techniques and utilizing the cascadic multigrid to improve the efficiency.
 	\item[(3)] to study the impacts of local interactions, rotation, spin-orbit coupling and external trapping potential on ground states in 2D and 3D, and to explore novel ground state patterns and structures.
 \end{itemize}
 
 The rest of the paper is organized as follows. In Section \ref{exist-prop}, we establish existence results and some properties of the ground states. 
 In Section \ref{numericalmethod}, we present an accurate and efficient preconditioned nonlinear conjugate gradient method, its multigrid accelerated version, as well as three different preconditioners and common stopping criteria. 
 In Section \ref{result}, we test the performance of our algorithm and apply it to investigate the ground state patterns and vortex structures in 2D and 3D under different physical parameters. Finally, we draw some conclusions in Section \ref{conclude}.

 \section{Existence and properties of the ground states}\label{exist-prop}
 In this section, we establish the existence of ground state of rotating SOC spin-1 BECs and show some properties of the ground states.
 
 \subsection{Existence of the ground states}\label{existence}
 The ground state of rotating SOC spin-1 BEC is defined as the minimizer such that 
 \begin{equation}\label{minimizer}
 	\mathcal{E}(\Phi_{\rm g})=\min_{\Phi\in\mathcal{M} } \mathcal{E}(\Phi)
 \end{equation}
 with 
 \[
 \mathcal{M}=\{(\phi_1,\phi_0,\phi_{-1})^\top|\phi_{j}\in H^1(\mathbb{R}^d)\cap L^2_{V}(\mathbb{R}^d),~j=-1,0,1, ~\mathcal{N}(\Phi)=1\},
 \]
 where $L^2_{V}(\mathbb{R}^d) := \{ \phi(\mathbf{x}) | \int_{\mathbb{R}^d} V(\mathbf{x}) \vert \phi(\mathbf{x}) \vert^2 \mathrm{d} \mathbf{x} < \infty \}$ is a weighted $L^2$ space.
 We define $ \mathbf{x}^{\perp} =(-y,x)$ for $d=2$ and $\mathbf{x}^\perp=(-y,x,0) $ for $d=3$, 
 then $L_z\phi=-i\mathbf{x}^{\perp}\cdot\nabla\phi$.
 Now we split the energy functional into three parts
 \[
 \mathcal{E}(\Phi)=\mathcal{E}_1(\Phi)+\mathcal{E}_2(\Phi)+\mathcal{E}_3(\Phi),
 \]
 where
 \begin{align}
 	\mathcal{E}_1(\Phi) &= \int_{\mathbb{R}^d} \sum_{j=-1}^{1} \left(\frac{1}{2}\vert \nabla \phi_j \vert^2 + V(\mathbf{x}) \vert \phi_j \vert^2 +i\Omega \bar{\phi}_j \mathbf{x}^{\perp}\cdot\nabla \phi_j \right) \mathrm{d} \mathbf{x}, \nonumber\\	
 	\mathcal{E}_2(\Phi)	&= \int_{\mathbb{R}^d} \frac{c_0}{2} \rho^2 + \frac{c_1}{2} \left[2(\Re((\bar\phi_1 + \bar\phi_{-1})\phi_0))^2 + 2(\Im(\phi_0(\bar\phi_{-1} - \bar\phi_1)))^2 + (|\phi_1|^2 - |\phi_{-1}|^2)^2\right] \mathrm{d} \mathbf{x}, \nonumber\\	
 	\mathcal{E}_3(\Phi)	&= -\gamma\int_{\mathbb{R}^d}\left(\bar{\phi}_1 L_0\phi_0+\bar{\phi}_0(L_0\phi_{-1}+L_1\phi_1)+\bar{\phi}_{-1}L_1\phi_0\right ) \mathrm{d}\mathbf{x}, \nonumber
 \end{align}	
 and $\Re (f)$ and $\Im (f)$ represent the real part and the imaginary part of $f$, respectively. Since $L_z$ is self-adjoint, we rewrite $\mathcal{E}_1$ as
 \begin{align}\label{E1}
 	\mathcal{E}_1(\Phi)&=\frac{1}{2} \int_{\mathbb{R}^d}\sum_{j=-1}^{1} \left(\vert \nabla \phi_j - i\Omega\mathbf{x}^{\perp}\phi_j\vert^2 + (2V(\mathbf{x})-\Omega^2|\mathbf{x}^\perp| ^2)\vert \phi_j \vert^2\right) \ \mathrm{d} \mathbf{x}.
 \end{align}
 For $d=2$, we denote by
 \[
 C_b=\inf_{0\neq \phi\in H^1(\mathbb{R}^2)}\frac{\|\nabla\phi\|_{2}^2\|\phi\|_2^2}{\|\phi\|_4^4}.
 \]
 We also have 
 \begin{equation}\label{GN}
 	\begin{aligned}
 		&\int_{\mathbb{R}^2}(|\phi_1|^2+|\phi_0|^2+|\phi_{-1}|^2)^2 \ \mathrm d\mathbf{x} \\
 		&\leq \frac{1}{C_b}\int_{\mathbb{R}^2}(|\nabla\phi_1|^2+|\nabla\phi_0|^2+|\nabla\phi_{-1}|^2)\ \mathrm d\mathbf{x}\int_{\mathbb{R}^2}(|\phi_1|^2+|\phi_0|^2+|\phi_{-1}|^2)\ \mathrm d\mathbf{x}
 	\end{aligned}
 \end{equation}
 by Lemma A.2 in \cite{gs-two-exist}.
 
 \begin{thm}
 	Suppose \begin{equation}\label{omega}
 		|\Omega| < \min\{\gamma_x,\gamma_y\}.
 	\end{equation}
 	Then there exists a ground state of \eqref{minimizer} if
 	
 	$(1)~ d=2$
 	\begin{equation}\label{beta1}
 		c_0>-C_b, \quad c_0+c_1>-C_b,
 	\end{equation}
 	
 	$(2)~ d=3$
 	\begin{equation}\label{beta2}
 		c_0,c_1\geq 0 ~~\text{or}~~c_1\leq 0,~c_0+c_1\geq 0.
 	\end{equation} 
 \end{thm}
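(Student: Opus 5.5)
The plan is the direct method of the calculus of variations: show that $\mathcal{E}$ is bounded below and coercive on $\mathcal{M}$, extract a weakly convergent minimizing sequence, recover the constraint through a compact embedding, and pass to the limit using weak lower semicontinuity.

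\emph{Coercivity.} Write $\nabla_A\phi:=\nabla\phi-i\Omega\mathbf{x}^\perp\phi$, so that \eqref{E1} holds.
\begin{itemize}
\item By \eqref{harmonic} and \eqref{omega}, there is $\delta>0$ with $2V(\mathbf{x})-\Omega^2|\mathbf{x}^\perp|^2\ge \delta|\mathbf{x}|^2$. In 3D this uses $\gamma_z>0$ for the $z$-term.
\item For the interaction part, note that $|\mathbf{F}|^2\le\rho^2$ pointwise; this is a standard spin-1 inequality that I would verify from the explicit formulas for $F_x,F_y,F_z$. Hence
\[
\mathcal{E}_2(\Phi)\ge \tfrac12\min\{c_0,c_0+c_1\}\int\rho^2 .
\]
In $d=3$, under \eqref{beta2}, this gives $\mathcal{E}_2\ge0$.
\item In $d=2$, set $\beta:=\max\{0,-\min(c_0,c_0+c_1)\}$, so $\beta<C_b$ by \eqref{beta1}. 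Apply \eqref{GN} to $(|\phi_1|,|\phi_0|,|\phi_{-1}|)$ together with the diamagnetic inequality $|\nabla|\phi||\le|\nabla_A\phi|$ and $\mathcal{N}(\Phi)=1$. This gives
\[
\mathcal{E}_2\ge -\frac{\beta}{2C_b}\sum_j\|\nabla_A\phi_j\|_2^2 .
\]
\item For the SOC part, $|L_0\phi|,|L_1\phi|\le\sqrt2|\nabla\phi|$, so Cauchy--Schwarz gives $|\mathcal{E}_3|\le C\gamma\sum_j\|\nabla\phi_j\|_2$. Writing $\nabla\phi=\nabla_A\phi+i\Omega\mathbf{x}^\perp\phi$ and applying Young's inequality, for any $\varepsilon>0$,
\[
|\mathcal{E}_3|\le \varepsilon\sum_j\big(\|\nabla_A\phi_j\|_2^2+\|\mathbf{x}\phi_j\|_2^2\big)+C_\varepsilon .
\]
\end{itemize}
Choosing $\varepsilon$ small compared with $\tfrac12(1-\beta/C_b)$ and $\delta$, I obtain
\[
\mathcal{E}(\Phi)\ge c\sum_j\big(\|\nabla_A\phi_j\|_2^2+\|\mathbf{x}\phi_j\|_2^2\big)-C .
\]
Since $\|\nabla\phi\|^2\le 2\|\nabla_A\phi\|^2+2\Omega^2\|\mathbf{x}\phi\|^2$, the energy controls the full $H^1\cap L^2_V$ norm. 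I expect this to be the main obstacle: the 2D case is critical, and the SOC and rotation terms have to be absorbed without consuming the margin $C_b-\beta$. The diamagnetic inequality is what lets \eqref{GN} be used with the magnetic gradient.

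\emph{Compactness and limit.} Take a minimizing sequence $\Phi^n\in\mathcal{M}$. By the bound above it is bounded in $(H^1\cap L^2_V)^3$. Since $V\to\infty$, the embedding $H^1\cap L^2_V\hookrightarrow L^2$ is compact. Passing to a subsequence, $\Phi^n\rightharpoonup\Phi^\infty$ weakly in $H^1\cap L^2_V$ and strongly in $L^2$, so $\mathcal{N}(\Phi^\infty)=1$. Interpolating between the $L^2$ convergence and the $H^1$ bound (Gagliardo--Nirenberg in 2D, $H^1\subset L^6$ in 3D) gives strong convergence in $L^4$. Consequently $\int\rho^2$ and $\int|\mathbf{F}|^2$ converge, since both are quartic expressions continuous on $L^4$, so $\mathcal{E}_2(\Phi^n)\to\mathcal{E}_2(\Phi^\infty)$ regardless of the signs of $c_0,c_1$.

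The quadratic part $Q:=\mathcal{E}_1+\mathcal{E}_3$ is a Hermitian quadratic form, using self-adjointness of $L_z$ and $\mathbf{S}$. By the coercivity estimate, $Q+C\mathcal{N}$ is a nonnegative and continuous form on the energy space. It is therefore convex and weakly lower semicontinuous. Since $\mathcal{N}(\Phi^n)\to1=\mathcal{N}(\Phi^\infty)$, it follows that $\liminf Q(\Phi^n)\ge Q(\Phi^\infty)$. Hence
\[
\mathcal{E}(\Phi^\infty)\le\liminf_{n}\mathcal{E}(\Phi^n)=\inf_{\mathcal{M}}\mathcal{E},
\]
and $\Phi^\infty\in\mathcal{M}$ is a ground state.
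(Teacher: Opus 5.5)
Your proposal is correct, and its architecture is the paper's. Coercivity comes from the diamagnetic inequality plus the vector Gagliardo--Nirenberg bound \eqref{GN} in 2D, and from $\mathcal{E}_2\ge 0$ in 3D. The SOC term is absorbed by Young's inequality. Then a bounded minimizing sequence, a compact embedding that keeps $\mathcal{N}=1$ and gives $L^4$ convergence, and lower semicontinuity finish the argument. Two local choices differ.

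\textbf{The interaction term.} You control it with the pointwise bound $|\mathbf{F}|^2\le\rho^2$, which does hold: $F_x^2+F_y^2=2|\bar\phi_1\phi_0+\bar\phi_0\phi_{-1}|^2\le 2|\phi_0|^2(|\phi_1|+|\phi_{-1}|)^2$, and $\rho^2-2|\phi_0|^2(|\phi_1|+|\phi_{-1}|)^2-(|\phi_1|^2-|\phi_{-1}|^2)^2=(|\phi_0|^2-2|\phi_1||\phi_{-1}|)^2\ge 0$. This treats both signs of $c_1$ at once. For $c_1<0$ the paper instead replaces $F_x^2+F_y^2$ by $2|\phi_0|^2(|\phi_1|^2+|\phi_{-1}|^2)$, which is not an upper bound: for $\phi_1=\phi_0=\phi_{-1}$ real it gives $4\phi_0^4$ against the true $8\phi_0^4$. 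The inequality the paper actually needs, $\mathcal{E}_2\ge\frac{c_0+c_1}{2}\int\rho^2$, is true exactly because of your estimate. The paper's 3D assertion $\mathcal{E}_2\ge 0$ also relies on it tacitly. On this point your version is the sound one.

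\textbf{The limit of the quadratic part.} The paper keeps $\mathcal{E}_3$ separate and shows it converges, pairing $L_i\phi_j^n\rightharpoonup L_i\phi_j^\infty$ weakly with $\phi_k^n\to\phi_k^\infty$ strongly in $L^2$; it uses lower semicontinuity only for $\mathcal{E}_1$. Your convexity argument for $\mathcal{E}_1+\mathcal{E}_3$ also works. However, nonnegativity of $\mathcal{E}_1+\mathcal{E}_3+C\mathcal{N}$ on the whole space does not follow from the full-energy estimate you state on $\mathcal{M}$. It should come from your $\mathcal{E}_1$ and $\mathcal{E}_3$ bounds in homogeneous form, using $|\mathcal{E}_3(\Phi)|\le C|\gamma|\,\|\Phi\|_2\|\nabla\Phi\|_2$. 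The paper's weak--strong pairing is the more economical step here, since it needs no global lower bound on the quadratic part.
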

 \begin{proof}
 	(1) $d=2$.	Since the case $c_1\geq 0$ is an easy adaption of the following argument, we will focus on the case $c_1<0$. There are positive numbers $a^*$ and $a$ such that
 	\[
 	1>a^*>\frac{a}{C_b},\quad c_0+c_1>-a>-C_b.
 	\]
 	Then by Diamagnetic inequality \cite{analysis}, we have
 	\[
 	\int_{\mathbb{R}^2} \vert \nabla \phi_j - i\Omega\mathbf{x}^{\perp}\phi_j\vert^2\ \mathrm{d} \mathbf{x}\geq \int_{\mathbb{R}^2} \vert \nabla |\phi_j|\vert^2 \ \mathrm{d} \mathbf{x}, ~~j=-1,0,1.
 	\]
 	Therefore 
 	\begin{align}
 		\mathcal{E}_1(\Phi)&=\frac{1}{2} \int_{\mathbb{R}^2}\sum_{j=-1}^{1} \vert \nabla \phi_j - i\Omega\mathbf{x}^{\perp}\phi_j\vert^2 \ \mathrm{d} \mathbf{x} +\int_{\mathbb R^2}\sum_{j=-1}^{1} \left(V(\mathbf{x})-\frac{\Omega^2}{2}|\mathbf{x}^\perp| ^2\right)\vert \phi_j \vert^2  \ \mathrm{d} \mathbf{x} \nonumber\\
 		&\geq\frac{1-a^*}{2} \int_{\mathbb R^2}\sum_{j=-1}^{1} \vert \nabla \phi_j - i\Omega\mathbf{x}^{\perp}\phi_j\vert^2 \ \mathrm{d} \mathbf{x} + \int_{\mathbb R^2}\sum_{j=-1}^{1} \left(V(\mathbf{x})-\frac{\Omega^2}{2}|\mathbf{x}^\perp| ^2\right)\vert \phi_j \vert^2  \ \mathrm{d} \mathbf{x}\nonumber\\
 		&+ \frac{a^*}{2}\int_{\mathbb R^2}(|\nabla|\phi_{-1}||^2+|\nabla|\phi_0||^2+|\nabla|\phi_1||^2) \ \mathrm{d} \mathbf{x}.
 		\nonumber
 	\end{align}
 	By Gagliardo-Nirenberg inequality \eqref{GN}, for any $\Phi\in \mathcal{M} $ we have
 	\begin{align}
 		\mathcal{E}_2(\Phi)	&\geq \frac{c_0}{2}\int_{\mathbb R^2}(|\phi_{-1}|^2+|\phi_0|^2+|\phi_1|^2)^2\ \mathrm d\mathbf{x} \nonumber\\
 		&+\frac{c_1}{2}\int_{\mathbb R^2}(2|\phi_0|^2(|\phi_{-1}|^2+|\phi_1|^2)+(|\phi_{1}|^2-|\phi_{-1}|^2)^2)\ \mathrm d\mathbf{x}\nonumber\\
 		&=\frac{c_0+c_1}{2} \int_{\mathbb R^2}(|\phi_{-1}|^2+|\phi_0|^2+|\phi_1|^2)^2 \ \mathrm d\mathbf{x} - \frac{c_1}{2} \int_{\mathbb R^2}|\phi_0|^4 \ \mathrm d\mathbf{x}- 2 c_1 \int_{\mathbb R^2}|\phi_{-1}|^2|\phi_{1}|^2 \ \mathrm d\mathbf{x} \nonumber\\
 		&\geq -\frac{a}{2} \int_{\mathbb R^2} (|\phi_{-1}|^2 + |\phi_0|^2 + |\phi_1|^2)^2 \mathrm d\mathbf{x}\geq -\frac{a}{2C_b} \int_{\mathbb R^2} (|\nabla|\phi_{-1}||^2 + |\nabla|\phi_0||^2 + |\nabla|\phi_1||^2)\ \mathrm{d} \mathbf{x}. \nonumber
 	\end{align}
 	Adding them together yields
 	\[
 	\mathcal{E}_1(\Phi)+\mathcal{E}_2(\Phi)\geq \frac{1-a^*}{2} \int_{\mathbb R^2} \sum_{j=-1}^{1} \vert \nabla \phi_j - i\Omega\mathbf{x}^{\perp}\phi_j\vert^2 \ \mathrm{d} \mathbf{x} + \int_{\mathbb R^2} \sum_{j=-1}^{1} \left(V(\mathbf{x}) - \frac{\Omega^2}{2}|\mathbf{x}^\perp|^2\right) \vert \phi_j \vert^2  \ \mathrm{d} \mathbf{x}.
 	\]
 	Therefore, by Cauchy inequality, there exists a positive constant $c$ depending only on $\Omega,k, c_0,c_1$ such that 
 	\[
 	\mathcal{E}_1(\Phi)+	\mathcal{E}_2(\Phi)\geq c(\|\nabla\Phi\|_2^2+\|\Phi\|_{L_V^2}^2).
 	\]
 	
 	For $\mathcal{E}_3(\Phi)$, we note 
 	\[
 	|\bar{\phi}_1 L_0\phi_0+\bar{\phi}_0(L_0\phi_{-1}+L_1\phi_1)+\bar{\phi}_{-1}L_1\phi_0 |\leq 2|\nabla \phi_0|(|\phi_1|+|\phi_{-1}|)+2|\phi_0|(|\nabla \phi_1|+|\nabla \phi_{-1}|).
 	\]
 	Hence, by Cauchy inequality, we obtain $\forall ~\varepsilon>0$
 	\[
 	|\mathcal{E}_3(\Phi)|\leq \varepsilon \|\nabla\Phi\|_2^2+C_{\varepsilon, |\gamma|} \|\Phi\|_2^2.
 	\]
 	Letting $\varepsilon=\frac{c}{2}$ we have 
 	\[
 	\mathcal{E}(\Phi)\geq \frac{c}{2}(\|\nabla\Phi\|_2^2+\|\Phi\|_{L_V^2}^2)- C \|\Phi\|_2^2.
 	\]
 	Therefore $\mathcal{E}$ is bounded below on $\mathcal{M}$ and we define
 	\[
 	m:=\inf_{\Phi\in\mathcal{M} }	\mathcal{E}(\Phi)>-\infty.
 	\]
 	
 	Now taking a minimizing sequence $\{\Phi^n=(\phi_{-1}^n,\phi_0^n,\phi_1^n)^\top\}_{n=1}^\infty\subset \mathcal{M}$ satisfying $\mathcal{E}(\Phi^n)\rightarrow m$, then there exists a constant $C>0$
 	\[
 	\|\Phi^n\|_2^2+	\|\nabla\Phi^n\|_2^2+\|\Phi^n\|_{L_V^2}^2\leq C, ~n\geq 1.
 	\]
 	That is to say, $\{\Phi^n\}_{n=1}^\infty$ is bounded in $H^1(\mathbb R^2)\cap L_V^2(\mathbb R^2)$. Therefore up to a subsequence we assume 
 	\[
 	\phi^n_j\rightharpoonup\phi^\infty_j ~\text{in}~ H^1(\mathbb R^2)\cap L_V^2(\mathbb R^2), ~	\phi^n_j\rightarrow \phi^\infty_j, ~\text{a.e.}~ j=-1,0,1.
 	\] 
 	Using \eqref{E1} and Fatou's lemma, we have 
 	\[
 	\liminf_{n\rightarrow\infty}\mathcal{E}_1(\Phi^n)\geq \mathcal{E}_1(\Phi^\infty).
 	\]
 	To deal with $\mathcal{E}_2$, by the well-known compactness theorem \cite{gs-two-exist}
 	\[
 	H^1(\mathbb R^2)\cap L_V^2(\mathbb R^2)\hookrightarrow L^p(\mathbb R^2), ~~2\leq p< \infty,
 	\]
 	we have 
 	\[
 	\phi^n_j\rightarrow\phi^\infty_j~\text{in}~ L^4(\mathbb R^2)\cap L^2(\mathbb R^2), ~~ j=-1,0,1.
 	\]
 	Hence 
 	\[
 	\lim_{n\rightarrow\infty}\mathcal{E}_2(\Phi^n)=\mathcal{E}_2(\Phi^\infty),\quad  \|\Phi^\infty\|_2^2=1.
 	\]
 	For $\mathcal{E}_3$, we note 
 	\[
 	L_i\phi_j^n\rightharpoonup	L_i\phi_j^\infty, \phi_j^n\rightarrow \phi_j^\infty~\text{in}~L^2(\mathbb R^2), ~i=0,1, ~j=-1,0,1.
 	\]
 	Then 
 	\[
 	\lim_{n\rightarrow\infty}\mathcal{E}_3(\Phi^n)=\mathcal{E}_3(\Phi^\infty).
 	\]
 	Hence we have 
 	\[
 	\liminf_{n\rightarrow\infty}\mathcal{E}(\Phi^n)\geq \mathcal{E}(\Phi^\infty),~\|\Phi^\infty\|_2^2=1.
 	\]
 	That is to say $\Phi^\infty=(\phi_{-1}^\infty,\phi_0^\infty,\phi_1^\infty)^\top$ is a minimizer.
 	
 	(2) $d=3$. 	By Cauchy inequality, there exists a positive constant $c$ depending only on $\Omega,k $ such that 
 	\[
 	\mathcal{E}_1(\Phi)\geq c(\|\nabla\Phi\|_2^2+\|\Phi\|_{L_V^2}^2).
 	\]
 	Under the condition \eqref{beta2}, it always holds $\mathcal{E}_2(\Phi)\geq 0$. The rest of the proof is the same as the case $d=2$ except using 
 	the well-known compactness theorem
 	\[
 	H^1(\mathbb R^3)\cap L_V^2(\mathbb R^3)\hookrightarrow L^p(\mathbb R^3), ~~2\leq p< 6.
 	\]
 	For the sake of brevity, we omit details.
 \end{proof}

 \subsection{Properties of the ground states}\label{properties}
 The Euler-Lagrange equations associated to minimization problem \eqref{groundstate} read as follows
 \begin{align}
 	\label{eigen1}
 	&\mu \phi_1(\mathbf{x}) = \left[\mathcal{H}_0 + c_1 \left(\rho_0 + \rho_1 - \rho_{-1}\right) \right] \phi_1 + c_1 \bar{\phi}_{-1}\phi_0^2 - \gamma L_0\phi_0:= H_1(\Phi), \\ 
 	\label{eigen2}
 	&\mu \phi_0(\mathbf{x}) = \left[\mathcal{H}_0 + c_1 \left(\rho_1 + \rho_{-1}\right) \right] \phi_0 + 2 c_1  \phi_{-1} \bar{\phi}_0 \phi_1 - \gamma \left(L_0\phi_{-1} + L_1\phi_1\right):= H_0(\Phi),\\
 	\label{eigen3}
 	&\mu \phi_{-1}(\mathbf{x}) = \left[\mathcal{H}_0 + c_1 \left(\rho_0 + \rho_{-1} - \rho_1\right) \right] \phi_{-1} + c_1 \phi_0^2 \bar{\phi}_{1} - \gamma L_1\phi_0 := H_{-1}(\Phi),
 \end{align}
 where $\Phi = \left(\phi_1, \phi_0, \phi_{-1} \right)^\top \in \mathcal{M}$, and $\mu$ is the Lagrange multiplier 
 (or chemical potential) of the CGPEs \eqref{GPE1}-\eqref{GPE3}, which can be computed as 
 \begin{equation}\label{muphi}
 	\mu(\Phi) = \int_{\mathbb{R}^d} \sum_{\ell=-1}^1 H_{\ell}(\Phi)\bar{\phi}_{\ell}\ \mathrm{d} \mathbf{x}.
 \end{equation}
 In fact, \eqref{eigen1}-\eqref{eigen3} is a nonlinear eigenvalue problem subject to a normalization constraint \eqref{mass}. It can also be derived from the CGPEs \eqref{GPE1}-\eqref{GPE3} by plugging in $\psi_{\ell}(\mathbf{x},t)=e^{-i\mu t}\phi_{\ell}(\mathbf{x})$, 
 and Eqn.~\eqref{eigen1}-\eqref{eigen3} is often referred to as the time-independent CGPEs. 
 In physics literature, eigenfunctions with higher energy than the ground state are called excited states. 
 
 Note $\mathcal{E}\left(\kappa ~ \Phi_{\rm g}\right) = \mathcal{E}\left(\Phi_{\rm g}\right)$ for all $\kappa \in \mathbb{C}$ with $\vert \kappa \vert = 1$.
 If $\Phi^1$ and $\Phi^2$ are two ground states differing only by a phase factor, that is, $\Phi^1 = \kappa~ \Phi^2$, 
 then the phase factor $\kappa$ is determined by
 \begin{equation}\label{phase-factor}
 	\kappa = \langle \Phi^1, \Phi^2 \rangle / \langle \Phi^2, \Phi^2 \rangle,
 \end{equation}
 where the inner product of two vector functions is defined as
 \begin{equation*}
 	\langle \mathbf{v},\mathbf{u} \rangle := \int_{\mathbb{R}^d} \sum \limits_{\ell=-1}^1 v_{\ell}(\mathbf{x}) \overline{u_{\ell}(\mathbf{x})} \ \mathrm{d} \mathbf{x}, ~\text{with} ~ \mathbf{v} = (v_1,v_0,v_{-1})^\top, ~\mathbf{u} = (u_1,u_0,u_{-1})^\top.
 \end{equation*}
 
 First, similar to \cite{spinorBEC,SOCspin1}, we have the following virial theorem for the ground states.
 \begin{thm}[\textbf{Virial identity}]\label{thm-virial}
 	Suppose that $\Phi_{\rm g} = \left(\phi_1^{\rm g}, \phi_0^{\rm g}, \phi_{-1}^{\rm g}\right)^\top \in \mathcal{M}$ is the ground state solution of the rotating spin-orbit coupled spin-1 BEC \eqref{GPE1}-\eqref{GPE3}. 
 	When $V(\mathbf{x})$ is chosen as a harmonic potential, we have
 	\begin{equation}\label{virial}
 		2 \mathcal{E}_{\rm kin}\left(\Phi_{\rm g}\right) - 2 \mathcal{E}_{\rm pot}\left(\Phi_{\rm g}\right) + d\ \mathcal{E}_{\rm spin}\left(\Phi_{\rm g}\right) 
 		+ \mathcal{E}_{\rm soc}\left(\Phi_{\rm g}\right) = 0,\qquad d=2,3,
 	\end{equation}
 	where
 	\begin{align*}
 		\mathcal{E}_{\rm kin}\left(\Phi\right) = \frac{1}{2}\int_{\mathbb{R}^d}\sum\limits_{\ell=-1}^1 |\nabla \phi_{\ell}|^2 \ \mathrm{d} \mathbf{x},& \qquad
 		\mathcal{E}_{\rm pot}\left(\Phi\right) = 
 		\int_{\mathbb{R}^d} \sum\limits_{\ell=-1}^1 V|\phi_{\ell}|^2 \ \mathrm{d} \mathbf{x}, \\
 		\mathcal{E}_{\rm spin}\left(\Phi\right) = \int_{\mathbb{R}^d} \frac{c_0}{2} \rho^2 + \frac{c_1}{2}\vert \mathbf{F} \vert^2 \ \mathrm{d} \mathbf{x},& \qquad
 		\mathcal{E}_{\rm rot}\left(\Phi\right) = -\Omega \int_{\mathbb{R}^d} \sum\limits_{\ell=-1}^1 \bar{\phi}_{\ell} L_z\phi_{\ell} \ \mathrm{d} \mathbf{x}, \\
 		\mathcal{E}_{\rm soc}\left(\Phi\right) = -\gamma& \int_{\mathbb{R}^d} \Phi^\dagger \mathbf{S} \Phi \ \mathrm{d} \mathbf{x}.
 	\end{align*}
 \end{thm}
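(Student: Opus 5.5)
The plan is the standard scaling argument behind virial identities. Since the ground state minimizes $\mathcal{E}$ on $\mathcal{M}$, we compare $\Phi_{\rm g}$ with a one-parameter family of mass-preserving dilations. For $\lambda>0$ we set
\[
\Phi^\lambda(\mathbf{x}) := \lambda^{d/2}\,\Phi_{\rm g}(\lambda\mathbf{x}),
\]
that is, $\phi_\ell^\lambda(\mathbf{x})=\lambda^{d/2}\phi_\ell^{\rm g}(\lambda\mathbf{x})$ for $\ell=1,0,-1$. A change of variables gives $\mathcal{N}(\Phi^\lambda)=\mathcal{N}(\Phi_{\rm g})=1$. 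Because $V$ is a harmonic (quadratic) potential, $\Phi^\lambda$ stays in $H^1\cap L^2_V$, so $\Phi^\lambda\in\mathcal{M}$ for every $\lambda>0$. Hence $f(\lambda):=\mathcal{E}(\Phi^\lambda)$ attains its minimum at $\lambda=1$, and $f'(1)=0$ provided $f$ is differentiable.

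Next we compute how each part of the energy scales, substituting $\mathbf{y}=\lambda\mathbf{x}$ in every integral.
\begin{itemize}
\item Kinetic term: $\nabla\phi^\lambda_\ell(\mathbf{x})=\lambda^{d/2+1}(\nabla\phi^{\rm g}_\ell)(\lambda\mathbf{x})$, so $\mathcal{E}_{\rm kin}(\Phi^\lambda)=\lambda^2\mathcal{E}_{\rm kin}(\Phi_{\rm g})$.
\item Potential term: $V$ is homogeneous of degree $2$, so $V(\mathbf{y}/\lambda)=\lambda^{-2}V(\mathbf{y})$ and $\mathcal{E}_{\rm pot}(\Phi^\lambda)=\lambda^{-2}\mathcal{E}_{\rm pot}(\Phi_{\rm g})$.
\item Interaction term: $\rho^2$ and $|\mathbf{F}|^2$ are quartic in $\Phi$, so $\mathcal{E}_{\rm spin}(\Phi^\lambda)=\lambda^{d}\mathcal{E}_{\rm spin}(\Phi_{\rm g})$.
\item Rotation term: $L_z=-i(x\partial_y-y\partial_x)$ is invariant under dilations, since the factor $\lambda^{-1}$ from $\mathbf{x}^\perp$ cancels the factor $\lambda$ from $\nabla$. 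Hence $\mathcal{E}_{\rm rot}(\Phi^\lambda)=\mathcal{E}_{\rm rot}(\Phi_{\rm g})$.
\item SOC term: $L_0,L_1$ are first-order operators with constant coefficients, so each entry of $\Phi^\dagger\mathbf{S}\Phi$ picks up exactly one factor $\lambda$, and $\mathcal{E}_{\rm soc}(\Phi^\lambda)=\lambda\,\mathcal{E}_{\rm soc}(\Phi_{\rm g})$.
\end{itemize}
Therefore
\[
f(\lambda)=\lambda^2\mathcal{E}_{\rm kin}+\lambda^{-2}\mathcal{E}_{\rm pot}+\lambda^d\mathcal{E}_{\rm spin}+\mathcal{E}_{\rm rot}+\lambda\,\mathcal{E}_{\rm soc},
\]
with all energies evaluated at $\Phi_{\rm g}$.

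This $f$ is an explicit smooth function of $\lambda>0$, and every coefficient is finite because $\Phi_{\rm g}\in\mathcal{M}$ (the SOC and rotation terms are controlled by Cauchy--Schwarz, as in the existence proof). Differentiating at $\lambda=1$ and using $f'(1)=0$ gives
\[
2\mathcal{E}_{\rm kin}-2\mathcal{E}_{\rm pot}+d\,\mathcal{E}_{\rm spin}+\mathcal{E}_{\rm soc}=0,
\]
which is \eqref{virial}.

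The main point needing care is the bookkeeping of the rotation and SOC terms. I would check the invariance of $\mathcal{E}_{\rm rot}$ directly from $\mathbf{x}^\perp\cdot\nabla$. For $\mathcal{E}_{\rm soc}$, I would confirm the single factor $\lambda$ using the explicit form $L_0=i\partial_x+\partial_y$, $L_1=i\partial_x-\partial_y$. For an anisotropic harmonic trap, the potential term still scales exactly as $\lambda^{-2}$, since each summand $\gamma_x^2x^2$, etc.\ is homogeneous of degree $2$; so no isotropy assumption is needed.
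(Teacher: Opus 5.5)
Your proposal is correct and is essentially the paper's proof: the paper uses the same mass-preserving dilation, parametrized as $(1+\varepsilon)^{d/2}\phi_\ell^{\rm g}((1+\varepsilon)\mathbf{x})$ instead of $\lambda^{d/2}\phi_\ell^{\rm g}(\lambda\mathbf{x})$, obtains the same scaling law for each energy part, and sets the derivative to zero at the minimum. Your extra checks (that $\Phi^\lambda$ stays in $\mathcal{M}$, that $L_z$ is dilation-invariant, and that anisotropic harmonic traps are covered) fill in details the paper leaves implicit.
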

 
 \begin{proof}
 	Let us introduce the functions 
 	$\phi_{\ell}^{\varepsilon} \left(\mathbf{x}\right) = \left(1 + \varepsilon\right)^{d/2} \phi_{\ell}^{\rm g} \left( \left(1+\varepsilon \right) \mathbf{x}\right)$. The energy $\mathcal{I}(\varepsilon) := \mathcal{E}\left(\phi_1^{\varepsilon},\phi_0^{\varepsilon},\phi_{-1}^{\varepsilon}\right)$ can be computed as 
 	\begin{equation}\label{E_epsilon}
 		\begin{aligned}
 			\mathcal{I}(\varepsilon) = (1+\varepsilon)^2 \mathcal{E}_{\rm kin}\left(\Phi_{\rm g}\right) +& \frac{1}{(1+\varepsilon)^2} \mathcal{E}_{\rm pot}\left(\Phi_{\rm g}\right) + (1+\varepsilon)^d \mathcal{E}_{\rm spin}\left(\Phi_{\rm g}\right) \\
 			+& \mathcal{E}_{\rm rot}\left(\Phi_{\rm g}\right) + (1+\varepsilon)\mathcal{E}_{\rm soc}\left(\Phi_{\rm g}\right).
 		\end{aligned}
 	\end{equation}
 	Since $\Phi_{\rm g} = \left(\phi_1^{\rm g}, \phi_0^{\rm g}, \phi_{-1}^{\rm g}\right)^\top$ is a ground state solution,
 	the energy function $\mathcal{I}(\varepsilon)$ attains a minimum at $\varepsilon = 0$, and it implies directly that $\frac{\mathrm{d} \mathcal{I}(\varepsilon)}{\mathrm{d} \varepsilon} |_{\varepsilon=0} =0.$
 	Differentiating \eqref{E_epsilon} and evaluating the resulted function at $\varepsilon =0$, we obtain the virial identity \eqref{virial}.
 \end{proof}
 
 Then, we derive the negativity of SOC energy for the ground states of the BEC under a radially symmetric trapping potential.
 
 \begin{thm}[\textbf{Negativity of the SOC energy}]
 	Suppose $\Phi_{\rm g}\left(\mathbf{x}\right) \in \mathcal{M}$ is the ground state solution of rotating spin-orbit coupled spin-1 BEC \eqref{GPE1}-\eqref{GPE3}. 
 	For radially symmetric trapping potential, i.e., $V(\mathbf{x}) = V(|\mathbf{x}|)$, we have 
 	$$\mathcal{E}_{\rm soc}\left(\Phi_{\rm g}\right) \leq 0.$$
 	Furthermore, if $\mathcal{E}_{\rm soc}\left(\Phi_{\rm g}\right) = 0$, the rotated ground state function, $\Phi^{\theta}(\bx) = \Phi_{\rm g}(\mathcal R(\theta)\bx)$ 
 	where the rotation matrix $\mathcal R(\theta)$ is defined as 
 	\begin{equation*}
 		\mathcal R(\theta) = \begin{pmatrix}
 			\cos\theta & -\sin\theta \\
 			\sin\theta & \cos\theta
 		\end{pmatrix}, ~d = 2,
 		\qquad \mathcal R(\theta) = \begin{psmallmatrix}
 			\cos\theta & -\sin\theta & 0 \\
 			\sin\theta & \cos\theta & 0 \\
 			0 & 0 & 1
 		\end{psmallmatrix}, ~d = 3,
 	\end{equation*}
 	is also a ground state. For the case $\mathcal{E}_{\rm soc}\left(\Phi_{\rm g}\right) < 0$, 
 	any rotated function $\Phi^{\theta}$ with $\theta \in (0,2\pi)$ is not any more a ground state.
 \end{thm}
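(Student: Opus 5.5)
The approach is to compare the ground state with its spatial rotations $\Phi^\theta(\mathbf{x})=\Phi_{\rm g}(\mathcal R(\theta)\mathbf{x})$. The key point is that every part of $\mathcal{E}$ except $\mathcal{E}_{\rm soc}$ is invariant under such rotations, while $\mathcal{E}_{\rm soc}$ only picks up a phase $e^{\pm i\theta}$. Minimality at $\theta=0$ then forces the sign of $\mathcal{E}_{\rm soc}$.

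\emph{Step 1: invariance of the remaining energy terms.} Clearly $\mathcal{N}(\Phi^\theta)=\mathcal{N}(\Phi_{\rm g})=1$, so $\Phi^\theta\in\mathcal{M}$. The terms $\mathcal{E}_{\rm kin}$ and $\mathcal{E}_{\rm spin}$ are rotation invariant, since the Laplacian is rotation invariant and $\rho$, $|\mathbf{F}|^2$ are pointwise functions of $\Phi$. The term $\mathcal{E}_{\rm pot}$ is invariant because $V$ is radial. The term $\mathcal{E}_{\rm rot}$ is invariant because $L_z$ generates rotations about the $z$-axis and hence commutes with $\Phi\mapsto\Phi(\mathcal R(\theta)\cdot)$. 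Consequently
\[
\mathcal{E}(\Phi^\theta)-\mathcal{E}(\Phi_{\rm g})=\mathcal{E}_{\rm soc}(\Phi^\theta)-\mathcal{E}_{\rm soc}(\Phi_{\rm g}).
\]

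\emph{Step 2: transformation of the SOC term (main computation).} In the variable $x+iy$, write $L_0=i(\partial_x-i\partial_y)=2i\partial_{z}$ and $L_1=i(\partial_x+i\partial_y)=2i\partial_{\bar z}$ (here $z$ denotes the complex variable $x+iy$). Under $\mathbf{x}\mapsto\mathcal R(\theta)\mathbf{x}$, which in 3D acts only on $(x,y)$, the chain rule gives $L_0(f\circ\mathcal R(\theta))=e^{i\theta}(L_0f)\circ\mathcal R(\theta)$ and $L_1(f\circ\mathcal R(\theta))=e^{-i\theta}(L_1f)\circ\mathcal R(\theta)$. I will double-check the sign of $\theta$, but the argument does not depend on it. Next I integrate by parts: $i\partial_x$ is self-adjoint and $\partial_y$ is skew-adjoint, so $L_1^*=L_0$. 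Hence $\int\bar\phi_0L_1\phi_1=\overline{\int\bar\phi_1L_0\phi_0}$ and $\int\bar\phi_{-1}L_1\phi_0=\overline{\int\bar\phi_0L_0\phi_{-1}}$. Setting
\[
A:=\int_{\mathbb{R}^d}\left(\bar\phi^{\rm g}_1L_0\phi^{\rm g}_0+\bar\phi^{\rm g}_0L_0\phi^{\rm g}_{-1}\right)\mathrm{d}\mathbf{x},
\]
and changing variables (the Jacobian of $\mathcal R(\theta)$ equals $1$), this yields
\[
\mathcal{E}_{\rm soc}(\Phi^\theta)=-2\gamma\,\Re\!\left(e^{i\theta}A\right),\qquad \mathcal{E}_{\rm soc}(\Phi_{\rm g})=-2\gamma\,\Re A .
\]

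\emph{Step 3: conclusion.} Write $\gamma A=|\gamma A|e^{i\alpha}$. Since $\Phi_{\rm g}$ is a minimizer, $\theta\mapsto-2|\gamma A|\cos(\theta+\alpha)$ is minimized at $\theta=0$. If $\gamma A\neq0$ this forces $\cos\alpha=1$, so $\mathcal{E}_{\rm soc}(\Phi_{\rm g})=-2|\gamma A|<0$. In that case $\mathcal{E}(\Phi^\theta)-\mathcal{E}(\Phi_{\rm g})=2|\gamma A|(1-\cos\theta)>0$ for every $\theta\in(0,2\pi)$, so no nontrivial rotation is a ground state. If $\gamma A=0$, then $\mathcal{E}_{\rm soc}(\Phi_{\rm g})=0$ and $\mathcal{E}(\Phi^\theta)=\mathcal{E}(\Phi_{\rm g})$ for all $\theta$, so every $\Phi^\theta$ is also a ground state. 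In all cases $\mathcal{E}_{\rm soc}(\Phi_{\rm g})\le0$. Conversely, $\mathcal{E}_{\rm soc}(\Phi_{\rm g})=0$ forces $\gamma A=0$, since we have shown $\mathcal{E}_{\rm soc}(\Phi_{\rm g})=-2|\gamma A|$. This gives both dichotomy statements of the theorem.

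The main obstacle is Step 2: tracking the exact phase picked up by $L_0$ and $L_1$ under rotation, and justifying $L_1^*=L_0$ for $H^1$ functions. The latter holds by density of $C_c^\infty$ in $H^1$.
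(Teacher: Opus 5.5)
Your proof is correct and follows the same route as the paper: rotate the ground state, observe that only $\mathcal{E}_{\rm soc}$ changes, and use minimality at $\theta=0$. Your explicit computation $\mathcal{E}_{\rm soc}(\Phi^\theta)=-2\gamma\Re(e^{i\theta}A)$ is more careful than the paper's proof, which writes $\cos\theta\,\mathcal{E}_{\rm soc}(\Phi_{\rm g})$ directly and so omits the term $2\gamma\sin\theta\,\Im A$; that term vanishes only because $\theta=0$ is a minimizer, which is exactly what your Step 3 establishes.
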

 
 \begin{proof} 
 	The energy $\mathcal{J}(\theta) := \mathcal{E}\left(\Phi^{\theta}(\bx)\right)$ of the rotated ground state function is given as follows
 	\begin{equation*}
 		\mathcal{J}(\theta) = \mathcal{E}_{\rm kin}\left(\Phi_{\rm g}\right) + \mathcal{E}_{\rm pot}\left(\Phi_{\rm g}\right) + \mathcal{E}_{\rm spin}\left(\Phi_{\rm g}\right) + \mathcal{E}_{\rm rot}\left(\Phi_{\rm g}\right) + \cos\left(\theta\right) \mathcal{E}_{\rm soc}\left(\Phi_{\rm g}\right).
 	\end{equation*}
 	Since $\Phi_{\rm g}$ is a ground state solution, the energy function $\mathcal{J}(\theta)$ attains a minimum at $\theta = 0$. Therefore, we can prove that $\mathcal{E}_{\rm soc}\left(\Phi_{\rm g}\right) \leq 0.$ 
 	
 	If $\mathcal{E}_{\rm soc}\left(\Phi_{\rm g}\right) = 0$, then $\mathcal{J}(\theta) \equiv \mathcal{E}\left(\Phi_{\rm g}\right)$, so for any $\theta \in (0,2\pi)$, $\Phi^{\theta}$ is also a ground state. If $\mathcal{E}_{\rm soc}\left(\Phi_{\rm g}\right) < 0$, then $\mathcal{J}(\theta) > \mathcal{E}\left(\Phi_{\rm g}\right)$ for any $\theta \in (0,2\pi)$, that is, $\Phi^{\theta}$ is not a ground state.
 \end{proof}

 \section{Numerical method}\label{numericalmethod}
 In this section, we propose an efficient and accurate numerical method to compute the ground states of rotating SOC spin-1 BECs. 
 
 \subsection{Preconditioned nonlinear conjugate gradient method}\label{pcgmethod}
 In this subsection, we adapt the PCG method to compute the ground states of rotating SOC spin-1 BECs. 
 Let $\Phi^n=\left(\phi_1^n,\phi_0^n,\phi_{-1}^n\right)^\top$ be the $n$-th $(n \geq 0)$ wave function, $D^n=\left(d_1^n,d_0^n,d_{-1}^n\right)^\top$ and $\tau_n$ 
 be the $n$-th descent direction and step size, respectively. 
 Generally, the subsequent wave function $\Phi^{n+1}$ is updated as
 \begin{equation}\label{update}
 	\Phi^{\left(\#\right)} = \Phi^n + \tau_n D^n,\quad
 	\Phi^{n+1} = \Phi^{\left(\#\right)}/\Vert \Phi^{\left(\#\right)} \Vert. 
 \end{equation}
 Actually, the above formula can be reformulated as a linear combination of $\Phi^n$ and the descent direction $D^n$, that is,
 \begin{equation}\label{formula-pcg}
 	\Phi^{n+1} = \cos\left(\theta_n\right) \Phi^n + \sin\left(\theta_n\right) \hat{P}^n, \quad \hat{P}^n:= P^n/\Vert P^n\Vert, 
 \end{equation}
 where $P^n=\left(p_1^n,p_0^n,p_{-1}^n\right)^\top$ is the projection of $D^n$ onto the tangent subspace of the manifold $\mathcal{M}$ at $\Phi^n$, i.e.,
 \begin{equation}\label{formula-project}
 	P^n = D^n - \Re \langle D^n,\Phi^n \rangle \Phi^n.
 \end{equation}
 It is easy to check that $\Re \langle P^n,\Phi^n \rangle = 0$ and $\Phi^{n+1}\in\mathcal{M}$. Now we take the descent direction $D^n = \left(d_1^n,d_0^n,d_{-1}^n\right)^\top$ 
 as the following nonlinear conjugate gradient direction with preconditioner
 \begin{equation}
 	D^n=\begin{cases}
 		-\mathscr{P} r^n,& \quad n = 0, \\
 		-\mathscr{P} r^n + \beta^{n} P^{n-1},& \quad n \geq 1.
 	\end{cases}
 \end{equation}
 Here, $\mathscr{P}$ is a symmetric positive definite preconditioner, which will be discussed later in subsection \ref{precondition},
 the residual 
 $$r^n = \left(r_1^n,r_0^n,r_{-1}^n\right)^\top:=\left(H_1(\Phi^n) - \mu_n \phi_1^n,H_0(\Phi^n) - \mu_n \phi_0^n,H_{-1}(\Phi^n) - \mu_n \phi_{-1}^n\right)^\top$$ 
 is computed according to \eqref{eigen1}-\eqref{eigen3} and the ``momentum" term $\beta^n$ can be computed using the Polak-Ribi\`ere-Polyak formula:
 \begin{equation*}
 	\beta^n = \max \left\{ \frac{ \Re \langle r^n-r^{n-1},\mathscr{P} r^n \rangle} {\Re \langle r^{n-1},\mathscr{P} r^{n-1} \rangle}, 0 \right\}.
 \end{equation*}
 
 A detailed step-by-step algorithm description is summarized in the following algorithm.
 
 \begin{algorithm}[htbp] 
 	\caption{Preconditioned nonlinear conjugate gradient (PCG) method.}
 	\label{algorithm1}
 	\begin{algorithmic}
 		\State $n = 0$, given initial guess $\Phi^0$
 		\While{not converged}
 		\State $\mu_n = \mu(\Phi^n)$
 		\State $r_{\ell}^n = H_{\ell}(\Phi^n) - \mu_n \phi_{\ell}^n \ (\ell=1,0,-1)$
 		\If{$n = 0$}
 		\State $D^n = -\mathscr{P} r^n$
 		\Else
 		\State $\beta^n = \Re \langle r^n-r^{n-1},\mathscr{P} r^n \rangle / \Re \langle r^{n-1},\mathscr{P} r^{n-1} \rangle$
 		\State $\beta^n = \max\left(\beta^n,0\right), \ D^n = -\mathscr{P} r^n + \beta^n P^{n-1}$
 		\EndIf
 		\State $P^n = D^n - \Re \langle D^n,\Phi^n \rangle \Phi^n$
 		\State $\hat{P}^n = P^n \big/ \Vert P^n \Vert$
 		\State $\theta_n = \mathop{\arg\min}\limits_{\theta > 0} \mathcal{E}\left(\cos\left(\theta\right) \Phi^n + \sin\left(\theta\right) \hat{P}^n \right)$
 		\State $\Phi^{n+1} = \cos(\theta_n) \Phi^n + \sin(\theta_n) \hat{P}^n$
 		\State $n = n + 1$
 		\EndWhile
 	\end{algorithmic}
 \end{algorithm}

 To find the optimal step size $\theta_n$, we can implement the line search to solve the following one-dimensional minimization problem
 \begin{equation}\label{theta}
 	\theta_n = \mathop{\arg\min}\limits_{\theta > 0}  ~ \mathcal{E}\left(\cos\left(\theta\right) \Phi^n + \sin\left(\theta\right) \hat{P}^n\right).
 \end{equation}
 Since $\mathcal{E}(\theta) := \mathcal{E}\left(\cos\left(\theta\right) \Phi^n + \sin\left(\theta\right) \hat{P}^n\right)$ is not a quadratic function,
 it generally requires many evaluations of the energy functional at given variables $\theta$ with the minimization algorithm, for example, Brent's method \cite{Brent1,Brent2},
 which is often time-consuming, especially in high space dimensions.
 Fortunately, we can explicitly rewrite $\mathcal{E}(\theta)$ in terms of $\sin\left(\theta\right)$ and $\cos\left(\theta\right)$, 
 and all associated coefficients can be pre-computed with FFT efficiently. 
 As a result, equation \eqref{theta} is reduced to a standard one-dimensional problem with explicit expressions, and one can easily apply a minimization algorithm to obtain the minimum efficiently.\\
 
 Alternatively, we can obtain a simple approximation of $\theta_n$ 
 by first expanding $\mathcal{E}(\theta)$ up to second order of $\theta$ since it usually gets close to zero as the solution approaches the ground state 
 and then solving the approximation polynomial problem. That is to say, by a direct calculation, we have
 \begin{equation}\label{approeng}
 	\begin{aligned}
 		\mathcal{E}(\theta)
 		\approx \ a_n\, \theta^2 + b_n\, \theta + c_n:= \ f_n \left(\theta\right),\quad  ~\theta \sim 0 
 	\end{aligned}
 \end{equation}
 with coefficients
 \begin{align}
 	a_n =& -\mu(\Phi^n) + \frac{1}{\Vert P^n \Vert^2} \int_{\mathbb{R}^d} \sum_{\ell=-1}^1 H_\ell(P^n) \bar{p}_\ell^n \ \mathrm{d}\mathbf{x} \label{an} \\
 	& + \frac{2}{\Vert P^n \Vert^2} \int_{\mathbb{R}^d} c_0 \left( \Re \left( (\Phi^n)^\dagger P^n \right) \right)^2 + c_1 \sum_\alpha \left[ \Re \left( \tilde{F}_\alpha(\Phi^n, P^n) \right) \right]^2 \ \mathrm{d}\mathbf{x}, \notag \\
 	b_n =& \frac{2}{\| P^n \|} \Re \sum_{\ell=-1}^1 \int_{\mathbb{R}^d} H_\ell(\Phi^n) \bar{p}_\ell^n \, \mathrm{d}\mathbf{x}, \qquad c_n = \mathcal{E}(\Phi^n), \label{bn}
 \end{align}
 where $\tilde{F}_{\alpha}$ $(\alpha = x, y,z)$ are the following conjugate bilinear forms 
 \begin{align*}
 	\tilde{F}_x(\mathbf{v},\mathbf{u}) =& \frac{1}{\sqrt{2}}\left[\bar{u}_1 v_0 + \bar{u}_0 (v_1 + v_{-1}) + \bar{u}_{-1} v_0\right], \\
 	\tilde{F}_y(\mathbf{v},\mathbf{u}) =& \frac{i}{\sqrt{2}}\left[-\bar{u}_1 v_0 + \bar{u}_0 (v_1 - v_{-1}) + \bar{u}_{-1} v_0\right], \\
 	\tilde{F}_z(\mathbf{v},\mathbf{u}) =& \bar{u}_1 v_1 - \bar{u}_{-1} v_{-1}, ~\text{with}~ \mathbf{v} = (v_1,v_0,v_{-1})^\top, \mathbf{u} = (u_1,u_0,u_{-1})^\top.
 \end{align*}
 
 Unlike the general steepest descent algorithm, the preconditioned conjugate gradient direction $P^n$ might not always be a descent direction, that is to say,
 the energy does not necessarily decrease at each iteration even with a very small step size.
 To guarantee the energy decreasing property at each iteration, we adopt the following remedies.
 We first compute the coefficients $a_n$ and $b_n$ of Eqn.~\eqref{approeng}. 
 If $b_n > 0$ or $b_n = 0~\&\&~a_n \geq 0$, indicating that $P^n$ can not decrease $f_n\left(\theta\right)$, we switch to the preconditioned steepest descent (PSD) direction, i.e., set $\beta^n = 0$. With this choice, noticing that $\mathscr{P}$ is positive definite and $\langle \Phi^n, r^n \rangle = 0$, we can easily check that
 \begin{equation*}
 	b_n = \frac{2 \Re \langle P^n, r^n + \mu_n \Phi^n \rangle}{\Vert P^n \Vert} = \frac{2 \Re \langle D^n - \Re \langle D^n, \Phi^n \rangle \Phi^n, r^n + \mu_n \Phi^n \rangle}{\Vert P^n \Vert} 
 	= -\frac{2 \Re \langle \mathscr{P} r^n, r^n \rangle}{\Vert P^n \Vert} < 0,
 \end{equation*} 
 and it automatically falls into other cases. If $a_n > 0~\&~b_n < 0$, $f_n\left(\theta\right)$ attains a minimum within the feasible region and we choose $\theta_n=-\frac{b_n}{2a_n}$ as the iteration step size. For case $a_n<0 ~\&~ b_n <0$, $f_n\left(\theta\right)$ decreases monotonically for positive $\theta$, and we choose a prescribed small positive number $\theta_{\rm trial}$ as the initial step and accept it as the optimal step if the energy decreases, otherwise, we halve it until acceptance.
 The details are summarized in Algorithm \ref{algorithm2}.
 
 \begin{algorithm}[htbp] 
 	\caption{Line search and backtracking algorithm.}\label{algorithm2}
 	\begin{algorithmic}
 		\State Given a trial step size $\theta_{\rm trial}$ and a backtracking factor $\sigma \in (0,1)$.
 		\State \textbf{Step 1:} Compute $a_n$ and $b_n$ according to \eqref{an}-\eqref{bn}.
 		\State \textbf{Step 2:} Determine the step size $\theta_n$:
 		\If{$(b_n > 0)$ or $(b_n = 0 ~\& ~a_n \geq 0)$}
 		\State set $\beta^n = 0$ and go to \textbf{Step 1}
 		\ElsIf{$(a_n > 0~\&~b_n < 0)$}
 		\State $\theta_n = -\frac{b_n}{2 a_n}$
 		\Else
 		\State $\theta_n = \theta_{\rm trial}$
 		\EndIf
 		\State \textbf{Step 3:} If the energy decreases, then stop; otherwise, go to \textbf{Step 4}.
 		\State \textbf{Step 4:} Update $\theta_n := \sigma \theta_n$ and go to \textbf{Step 3}.
 	\end{algorithmic}
 \end{algorithm}

 \subsection{Spatial discretization by Fourier spectral method}\label{discretize}
 In the presence of the external potential $V(\mathbf{x})$, the wave function is smooth and decays exponentially at the far field. Therefore, we can reasonably truncate the whole space to a large enough bounded rectangular domain $\mathcal{D}:=[-L,L]^d$, and impose periodic boundary conditions. Then we can readily apply the Fourier spectral method \cite{ShenBook,NickBook} to discretize the wave function $\Phi(\mathbf{x})=\left(\phi_1(\mathbf{x}),\phi_0(\mathbf{x}),\phi_{-1}(\mathbf{x})\right)^\top$.
 For simplicity of presentation, we shall illustrate the numerical scheme in 2D.
 
 We first discretize the computational domain $\mathcal{D}=[-L,L]^2$ with uniform grid size $h = 2L/N$, $N \in 2\mathbb{Z}^{+}$,
 then we define the physical, Fourier index and grid point sets as follows
 \begin{equation*}
 	\begin{aligned}
 		\mathcal{O}_{N} =& \left\{(m, n) \in \mathbb{N}^2 \mid 0 \leq m \leq N-1,~ 0 \leq n \leq N-1 \right\},\\
 		\Lambda_{N} =& \left\{(p, q) \in \mathbb{Z}^2 \mid -N / 2 \leq p \leq N / 2-1,~ -N / 2 \leq q \leq N / 2-1\right\},\\
 		\mathcal{Q}_{N} =& \left\{(x_m, y_n) := (-L+m h, -L+n h)\mid (m, n) \in \mathcal{O}_{N}\right\}.
 	\end{aligned}
 \end{equation*}
 and the basis of Fourier series
 \begin{equation*}
 	T_{p q}(x, y)=e^{i v_p^x\left(x+L\right)} e^{i v_q^y\left(y+L\right)},\quad
 	v_p^x=\pi p/L, ~ v_q^y=\pi q/L, \quad (p, q) \in \Lambda_{N}.
 \end{equation*}
 
 For the $\ell$-th component $\phi_{\ell}$ of the wave function, the Fourier spectral approximations of $\phi_{\ell}$, $\Delta \phi_{\ell}$, $L_z \phi_{\ell}$, $L_0 \phi_{\ell}$ and $L_1 \phi_{\ell}$ read as
 \begin{align*}
 	&\left(\phi_{\ell} \right)_{m n} \approx (\widetilde{\phi_{\ell}})_{m n} =\sum_{(p,q)\in \Lambda_{N}} \widehat{(\widetilde{\boldsymbol{\phi_{\ell}}})}_{p q} ~T_{p q}\left(x_m, y_n\right), \quad(m, n) \in \mathcal{O}_{N}, \\
 	&\left(\Delta \phi_{\ell} \right)_{m n} \approx(\llbracket \Delta \rrbracket \widetilde{\phi_{\ell}})_{m n} 	 =-\sum_{(p,q)\in \Lambda_{N}}\left(\left(v_p^x\right)^2+\left(v_q^y\right)^2\right) \widehat{(\widetilde{\boldsymbol{\phi_{\ell}}})}_{p q} ~ T_{p q}\left(x_m, y_n\right),\\
 	&\left(L_z \phi_{\ell} \right)_{m n} \approx\left(\llbracket L_z \rrbracket 	\widetilde{\phi_{\ell}}\right)_{m n} =\sum_{(p,q)\in \Lambda_{N}}\left(x_m v_q^y-y_n v_p^x\right) \widehat{(\widetilde{\boldsymbol{\phi_{\ell}}})}_{p q} ~ T_{p q}\left(x_m, y_n\right), \\
 	&\left(L_0 \phi_{\ell} \right)_{m n} \approx\left(\llbracket L_0 \rrbracket 	\widetilde{\phi_{\ell}}\right)_{m n} =\sum_{(p,q)\in \Lambda_{N}}\left(- v_p^x + i v_q^y \right) \widehat{(\widetilde{\boldsymbol{\phi_{\ell}}})}_{p q} ~T_{p q}\left(x_m, y_n\right), \\
 	&\left(L_1 \phi_{\ell} \right)_{m n} \approx\left(\llbracket L_1 \rrbracket 	\widetilde{\phi_{\ell}}\right)_{m n} =\sum_{(p,q)\in \Lambda_{N}}-\left(v_p^x + i v_q^y\right) \widehat{(\widetilde{\boldsymbol{\phi_{\ell}}})}_{p q} ~T_{p q}\left(x_m, y_n\right),
 \end{align*}
 where the Fourier coefficients $\widehat{(\widetilde{\boldsymbol{\phi_{\ell}}})}_{p q}$, defined below 
 \begin{equation*}
 	\begin{aligned}
 		\widehat{(\widetilde{\boldsymbol{\phi_{\ell}}})}_{p q} :=& \sum_{(m,n)\in\mathcal{O}_N}\mkern-12mu \phi_\ell(x_m,y_n) ~ \overline{T_{pq}(x_m,y_n)} \\
 		=& \sum_{(m,n)\in\mathcal{O}_N} \mkern-12mu \phi_\ell(x_m,y_n) ~ e^{-i v_p^x\left(x_m + L\right)} e^{-iv_q^y\left(y_n + L\right)}, ~(p,q)\in \Lambda_N,
 	\end{aligned}
 \end{equation*}
 can be accelerated by the discrete Fast Fourier transform (FFT) within $O(N^2\log N)$ float operations.

 \subsection{The multigrid PCG method}\label{multigrid}
 It is known that computation on a very fine fixed mesh leads to a large-size optimization problem and shall consume a significant amount of time, especially for high-dimensional cases. 
 Moreover, it is important to provide appropriate initial guesses so as to achieve better convergence. 
 Better initial guesses could be chosen as the commonly-used initial guesses or generated by interpolating numerical ground state obtained on coarser mesh grid.
 Here, in this article, we adopt the cascadic multigrid method, similar to that for single component BECs in \cite{pcg-dbec}, to accelerate the convergence, and shall name it as cascadic multigrid PCG (CM-PCG) method. 
 
 For sake of presentation simplicity, we define hierarchical index and grid point sets as follows
 \begin{equation*}
 	\begin{aligned}
 		\mathcal{O}_{N}^p =& \left\{(m, n) \in \mathbb{N}^2 \mid 0 \leq m \leq N_p-1,~ 0 \leq n \leq N_p-1 \right\},\quad \text{ with } \quad  N_p = 2^p, \\[0.3em]
 		\mathcal{Q}_{N}^p =& \left\{(x_m, y_n) := (-L+m h, -L+n h)\mid (m, n) \in \mathcal{O}_{N}^p\right\}.
 	\end{aligned}
 \end{equation*} The approach is as follows: 
 \begin{enumerate}
 	\item[a)] Start from initial guess $\Phi_0^{p}$ on mesh grid $\mathcal{Q}_{N}^p$ to obtain ground state $\Phi_{\rm g}^p$.
 	\item[b)] Refine the mesh $\mathcal{Q}_{N}^p$ by halving mesh size to get finer mesh $\mathcal{Q}_{N}^{p+1}$.
 	\item[c)] Interpolate $\Phi_{\rm g}^p$ on $\mathcal{Q}_{N}^{p+1}$ to obtain new initial guess $\Phi_0^{p+1}$.
 	\item[d)] Apply the PCG with $\Phi_0^{p+1}$ to get $\Phi_{\rm g}^{p+1}$, and repeat the process until the finest grid.
 \end{enumerate}
 A detailed algorithm description is summarized in Algorithm \ref{algorithm3}.
 
 \begin{algorithm}[h]
 	\caption{The CM-PCG method.}
 	\label{algorithm3}
 	\begin{algorithmic}
 		\State Given initial grid size $N_p$, initial mesh $\mathcal{Q}_{N}^{p}$ and initial guess $\Phi_0^p$.
 		\While{not reach the finest mesh}
 		\State Implement Algorithm \ref{algorithm1} to compute the ground state at level $p$: $\Phi_{\rm g}^p$
 		\State Interpolate $\Phi_{\rm g}^p$ at the refined mesh $\mathcal{Q}_{N}^{p+1}$, denoted as $\tilde{\Phi}_{\rm g}^p$
 		\State Set $\Phi_0^{p+1} = \tilde{\Phi}_{\rm g}^p$
 		\State $p = p + 1$
 		\EndWhile
 	\end{algorithmic}
 \end{algorithm}
 
 Notably, a ground state obtained at the coarser level usually provides a better initial guess for the refined level than the commonly-used initial guess, which is numerically confirmed in Example \ref{efficiency}. The multigrid strategy proved to be more efficient than directly computing on the fixed fine mesh. In addition, the cascadic multigrid technique can be extended to help accelerate other type of numerical methods, such as GFDN \cite{GFDN_2004}, the second-order flow \cite{secondflow}, etc.

 \subsection{Stopping criteria and Preconditioners}\label{precondition}
 There are three types of stopping criteria. A commonly
 used one is based on the wave function difference
 \begin{equation}\label{stop1}
 	\Phi_{\rm err}^{\infty} := \max \limits_{-1 \leq \ell \leq 1} \Vert \phi_{\ell}^{n+1} - \phi_{\ell}^n \Vert_{\infty} < \varepsilon_{\rm tol}.
 \end{equation}
 The second is based on the symmetric-covariant residual 
 \begin{equation}\label{stop2}
 	r_{\rm err}^{n,\infty} := \max \limits_{-1 \leq \ell \leq 1} \Vert r_{\ell}^n \Vert_{\infty} < \varepsilon_{\rm tol},
 \end{equation}
 and the third one is based on the symmetry-invariant energy difference 
 \begin{equation}\label{stop3}
 	\mathcal{E}_{\rm err}^n := \vert \mathcal{E}(\Phi^{n+1}) - \mathcal{E}(\Phi^n) \vert < \varepsilon_{\rm tol}.
 \end{equation}
 Stopping criterion \eqref{stop1} is commonly adopted in the gradient-flow type methods. However, it could be problematic if the minima are not isolated but form a continuum due to symmetries (for instance, complex phase). Criterion \eqref{stop3} converges faster than the other two for the same tolerance $\varepsilon_{\rm tol}$. Extensive numerical results show that it takes relatively long time to reach \eqref{stop1} or \eqref{stop2}, especially for large rotation speed and/or strong SOC interaction. \\
 
 A good preconditioner $\mathscr{P}$ is both necessary and important in order to accelerate the iteration by selecting appropriate descent direction near the minimum. Generally speaking, the preconditioner should approximate the inverse of Hessian matrix as close as possible. Similar to \cite{pcg}, we choose the following three preconditioners.
 
 \textbf{Kinetic energy preconditioner}. The first preconditioner relates only to the kinetic energy term:
 \begin{equation}\label{PreCondKin}
 	\mathscr{P}_{\Delta} = \textbf{diag} \left( \left(\alpha_{\Delta}^{1} - \frac{\Delta}{2}\right)^{-1}, \left(\alpha_{\Delta}^{0} - \frac{\Delta}{2}\right)^{-1}, \left(\alpha_{\Delta}^{-1} - \frac{\Delta}{2}\right)^{-1} \right),
 \end{equation}
 where $\alpha_{\Delta}^{\ell}$ $(\ell=1,0,-1)$ is a positive shifting constant so as to make the operator $\left(\alpha_{\Delta}^{\ell} - \frac{\Delta}{2}\right)^{-1}$ invertible. 
 When we choose $\alpha_{\Delta}^{\ell}=\frac{1}{2}$, the preconditioned descent direction $\mathscr{P}_{\Delta} r$ is equivalent to the Sobolev gradient of the Lagrangian \cite{Sobolevgrad}. Numerically, we found that the following choice
 \begin{equation}\label{PreCnstAlpha}
 	\alpha_{\Delta}^{\ell} = \int_{\mathbb{R}^d} \left( \frac{1}{2} \vert \nabla \phi_{\ell}^n \vert^2 + V \vert \phi_{\ell}^n \vert^2 + c_0 \rho^n \vert \phi_{\ell}^n \vert^2 \right) \mathrm{d} \mathbf{x} > 0,
 \end{equation} 
 is quite efficient.
 On a fixed domain with preconditioner \eqref{PreCondKin}, the iteration number is independent of mesh size but deteriorates for large domain size $L$ and/or spin-independent interaction strength $c_0$. 
 
 \textbf{Potential energy preconditioner}. The second one is related to the potential energy:
 \begin{equation}\label{PreCondPot}
 	\mathscr{P}_{V} = \textbf{diag} \left( \left(\alpha_{V}^{1} + V + c_0 \rho^n \right)^{-1}, \left(\alpha_{V}^{0} + V + c_0 \rho^n \right)^{-1}, \left(\alpha_{V}^{-1} + V + c_0 \rho^n \right)^{-1} \right).
 \end{equation}
 Similarly, $\alpha_{V}^{\ell}$ is a positive shifting constant to obtain an invertible operator. 
 We found that it is efficient to choose $\alpha_{V}^{\ell}=\alpha_{\Delta}^{\ell}$ as detailed above in Eqn.~\eqref{PreCnstAlpha}. Different from preconditioner \eqref{PreCondKin}, when the above preconditioner \eqref{PreCondPot} is applied, the performance of the preconditioner, measured in iteration number, deteriorates as the mesh size $h$ decreases but becomes stable as either the domain size $L$ or spin-independent interaction strength $c_0$ increases.
 
 \textbf{Combined energy preconditioner}. The third one is a combination of the previous two preconditioners:
 \begin{equation}\label{PreCondMix}
 	\mathscr{P} = \mathscr{P}_{C} = \mathscr{P}_{V}^{1/2} \mathscr{P}_{\Delta} \mathscr{P}_{V}^{1/2}.
 \end{equation}
 It achieves a stable performance independent of domain size $L$ and mesh size $h$. The output of operator $\mathscr{P}_{V}^{1/2}$ acting on function $\Phi(\mathbf{x})$ reads as 
 \begin{equation*}
 	\mathscr{P}_{V}^{1/2}  \Phi(\mathbf{x}) = \left(\phi_1(\mathbf{x})/ \sqrt{\alpha_{V}^{1} + V + c_0 \rho^n}, \phi_0(\mathbf{x})/ \sqrt{\alpha_{V}^{0} + V + c_0 \rho^n}, \phi_{-1}(\mathbf{x})/ \sqrt{\alpha_{V}^{-1} + V + c_0 \rho^n} \right),
 \end{equation*}
 and the implementation of $\mathscr{P}_{\Delta}$ requires solving a linear system of constant coefficients in frequency space.
 
 As shown in literature \cite{pcg,pcg-dbec,action}, the combined energy preconditioner \eqref{PreCondMix} outperforms the other two, especially for the fast rotation and strong spin-orbit coupling cases, 
 which is indispensable for the small mesh size and large domain size.
 Therefore, we always adopt the last preconditioner \eqref{PreCondMix} in our computation.

 \section{Numerical results}\label{result}
 In this section, we present numerical results to confirm the spatial accuracy and efficiency, and investigate the effects of local interaction, rotation, and spin-orbit coupling on the ground states. We also show some interesting phenomena of 3D ground states. To compute the ground states, there are ten commonly-used initial guesses for the single-component BEC \cite{pcg,pRCG} listed as follows
 \begin{align*}
 	&(a)\, \phi_{a}(\mathbf{x})=\frac{1}{\pi^{d/4}} e^{-\frac{\vert \mathbf{x} \vert^2}{2}}, \quad
 	(b)\, \phi_{b}(\mathbf{x})=(x+i y) \phi_{a}(\mathbf{x}), \quad
 	(\bar{b})\, \phi_{\bar{b}}(\mathbf{x})=\bar{\phi}_{b}(\mathbf{x}), \\
 	&(c)\, \phi_c(\mathbf{x})=\frac{\phi_a(\mathbf{x})+\phi_b(\mathbf{x})}{\left\|\phi_a(\mathbf{x})+\phi_b(\mathbf{x})\right\|}, \quad
 	(\bar{c})\, \phi_{\bar{c}}(\mathbf{x})=\bar{\phi}_c(\mathbf{x}), \\
 	&(d)\, \phi_d(\mathbf{x})=\frac{(1-\Omega) \phi_a(\mathbf{x})+\Omega \phi_b(\mathbf{x})}{\left\|(1-\Omega) \phi_a(\mathbf{x})+\Omega \phi_b(\mathbf{x})\right\|}, \quad
 	(\bar{d})\, \phi_{\bar{d}}(\mathbf{x})=\bar{\phi}_d(\mathbf{x}), \\
 	&(e)\, \phi_e(\mathbf{x})=\frac{\Omega \phi_a(\mathbf{x})+(1-\Omega) \phi_b(\mathbf{x})}{\left\|\Omega \phi_a(\mathbf{x})+(1-\Omega) \phi_b(\mathbf{x})\right\|}, \quad
 	(\bar{e})\, \phi_{\bar{e}}(\mathbf{x})=\bar{\phi}_e(\mathbf{x}), \quad (f)\ \phi_f(\mathbf{x})=\frac{\phi_{\rm g}^{\text{TF}}(\mathbf{x})}{\left\|\phi_{\rm g}^{\text{TF}}(\mathbf{x})\right\|},
 \end{align*}
 where
 \begin{equation*}
 	\phi_{\rm g}^{\text{TF}}(\mathbf{x})= \begin{cases}
 		\sqrt{(\mu_{\rm g}^{\text{TF}} - V(\mathbf{x}))/c_0}, & V(\mathbf{x}) < \mu_{\rm g}^{\text{TF}}, \\
 		0, & \text{otherwise},
 	\end{cases}
 \end{equation*}
 with \begin{equation*}
 	\mu_{\rm g}^{\text{TF}} = \frac{1}{2} \begin{cases}
 		(4 c_0 \gamma_x \gamma_y / \pi)^{1/2}, & d = 2, \\
 		(15 c_0 \gamma_x \gamma_y \gamma_z / 4\pi)^{2/5}, & d = 3.
 	\end{cases}
 \end{equation*}
 
 In Example \ref{accuracy}, \ref{Effect-local}-\ref{3d-result}, to compute the ground states, the initial guesses can be selected as 
 $\Phi^0 = (\phi_1^0, \phi_0^0,\phi_{-1}^0)^\top/\sqrt{3}$ where $\phi_1^0$, $\phi_0^0$ and $\phi_{-1}^0$ 
 are chosen independently from the above 10 types, resulting in 1000 combinations. Each of these initial data leads to a numerical stationary state by our numerical methods. And, the one with the lowest energy is viewed as the numerical ground state. 
 
 In the following experiments, unless stated otherwise, we choose the harmonic potential \eqref{harmonic} as the trapping potential with $\gamma_x = \gamma_y = 1$ for 2D and $\gamma_x = \gamma_y =\gamma_z = 1$ for 3D. 
 The computational domain $\mathcal D = [-L,L]^2$ is partitioned with the uniform mesh size in each direction. Unless otherwise specified, we adopt criterion \eqref{stop3} as the default stopping condition and fix $\varepsilon_{\rm tol} = 10^{-14}$. The algorithms were implemented in FORTRAN and run on a 3.00GH Intel(R) Xeon(R) Gold 6248R CPU with 36 MB cache in Ubuntu GNU/Linux.

 \begin{exmp}\label{accuracy}
 	Here, we test the spatial accuracy using the following three cases: 
 	\begin{flalign*}
 		&\textbf{ Case 1. } V\left(\mathbf{x}\right) = \frac{1}{2} \left(x^2 + y^2\right), c_0 = 100, c_1 = 1, \Omega = 0.1, \gamma = 0.3 \text{ in 2D}. && \\ 
 		&\textbf{ Case 2. } V\left(\mathbf{x}\right) = \frac{1}{2} \left(x^2 + y^2\right), c_0 = 100, c_1 = 1, \Omega = 0.3, \gamma = 0.3 \text{ in 2D}. && \\
 		&\textbf{ Case 3. } V\left(\mathbf{x}\right) = \frac{1}{2} \left(x^2 + y^2 + z^2 \right), c_0 = 100, c_1 = 1, \Omega = 0.1, \gamma = 0.3 \text{ in 3D}. && 
 	\end{flalign*}	
 \end{exmp}
 
 We choose domain $\mathcal{D} = [-16,16]^d$. Let $\Phi_{\rm g} = \left(\phi_1^{\rm g},\phi_0^{\rm g},\phi_{-1}^{\rm g}\right)^\top$ be the numerical ``exact" ground state obtained by Algorithm \ref{algorithm3} using 5-level cascadic multigrid with finest mesh being $h = \frac{1}{16}$, and $\Phi_{\rm g}^{h}$ be the numerical solution obtained with mesh size $h$. Denote the energy and chemical potential as $\mathcal{E}_{\rm g} := \mathcal{E}\left(\Phi_{\rm g}\right)$ and $\mu_{\rm g} := \mu\left(\Phi_{\rm g}\right)$. We use the following relative maximum norm to measure the error of the wave function:
 $$E_h := \left\Vert \bar{\kappa} \Phi_{\rm g}^h - \Phi_{\rm g} \right\Vert_{\infty}/\left\Vert \Phi_{\rm g} \right\Vert_{\infty},$$
 where $\kappa = \langle \Phi_{\rm g}^h, \Phi_{\rm g} \rangle/\langle \Phi_{\rm g}, \Phi_{\rm g} \rangle$ is derived from \eqref{phase-factor}. Additionally, we compute the residual of virial identity \eqref{virial} as
 $$
 I_h := 2\mathcal{E}_{\rm kin}\left(\Phi_{\rm g}^h\right) - 2 \mathcal{E}_{\rm pot}\left(\Phi_{\rm g}^h\right) + d\ \mathcal{E}_{\rm spin}\left(\Phi_{\rm g}^h\right) + \mathcal{E}_{\rm soc}\left(\Phi_{\rm g}^h\right),\qquad d=2,3.
 $$
 In this example, to compute numerical solutions with mesh size $h = 2^{-p}$ $(p = 0, 1, 2, 3)$, we always start with coarsest mesh $h=1$ and end up with finest mesh $h = 2^{-p}$, and the stopping criterion \eqref{stop2} is set with $\varepsilon_{\rm tol} = 10^{-12}$.
 
 \begin{table}[h]
 	\centering
 	\tabcolsep=0.7cm
 	\renewcommand\arraystretch{1.4}
 	\caption{Spatial discretization errors for \textbf{Case 1} (upper), \textbf{Case 2} (middle) and \textbf{Case 3} (lower) in \textbf{Example \ref{accuracy}}.}
 	\begin{tabular}{cccccc} 
 		\hline
 		Mesh size & $h=1$ & $h=\frac{1}{2}$ & $h=\frac{1}{4}$ & $h=\frac{1}{8}$ \\
 		\hline
 		$E_h$ &3.1010E-03&1.6363E-05&3.9117E-09&3.0701E-12 \\
 		$\left\vert \mathcal{E}_{\rm g} - \mathcal{E}\left(\Phi_{\rm g}^{h}\right) \right\vert$ &9.0058E-04&6.0350E-08&2.3181E-13&3.9524E-14 \\
 		$\left\vert \mu_{\rm g} - \mu\left(\Phi_{\rm g}^{h}\right) \right\vert$ &6.7070E-04&3.1782E-06&4.9827E-13&5.5955E-14 \\
 		$I_h$&1.2630E-02&1.0721E-05&7.6072E-13&8.0491E-16 \\
 		\hline
 		$E_h$ &6.5353E-03&1.9980E-05&4.5145E-10&1.0198E-11 \\
 		$\left\vert \mathcal{E}_{\rm g} - \mathcal{E}\left(\Phi_{\rm g}^{h}\right) \right\vert$ &7.9070E-04&2.4096E-07&7.1054E-15&9.7700E-15 \\
 		$\left\vert \mu_{\rm g} - \mu\left(\Phi_{\rm g}^{h}\right) \right\vert$ &2.1377E-03 &2.2019E-06&5.7732E-14&1.5099E-14 \\
 		$I_h$&1.7800E-02&3.1102E-06&3.6637E-14&7.7715E-15 \\
 		\hline
 		$E_h$ &4.8538E-03&3.3376E-05&4.0351E-08&9.9374E-10 \\
 		$\left\vert \mathcal{E}_{\rm g} - \mathcal{E}\left(\Phi_{\rm g}^{h}\right) \right\vert$ &4.2872E-04&1.5452E-07&1.4540E-11&1.3480E-11 \\
 		$\left\vert \mu_{\rm g} - \mu\left(\Phi_{\rm g}^{h}\right) \right\vert$ &3.0351E-03&4.6866E-08&9.1300E-12&7.5100E-12 \\
 		$I_h$&2.0030E-02&4.0704E-06&1.7062E-10&4.2660E-11 \\
 		\hline 
 	\end{tabular}
 	\label{table_acc}
 \end{table}
 
 Tab.~\ref{table_acc} lists the spatial errors of the ground state for \textbf{Case 1}, \textbf{Case 2} in 2D and \textbf{Case 3} in 3D.
 For brevity, we do not show the ground state here. In fact, both the ground states of \textbf{Case 1} and \textbf{Case 2} have vortices, while there is no vortex in \textbf{Case 3}.
 From Tab.~\ref{table_acc}, we can conclude that: our method is spectrally accurate for the ground state, regardless of space dimension and whether vortices exist or not.

 \begin{exmp}\label{perf-comp}
 	In this example, we compare our algorithm with existing numerical methods. Specifically, we consider the \textbf{Case 2} in Example \ref{accuracy}, and we apply Algorithm \ref{algorithm1} and the known projected gradient flow (PGF) method \cite{PGF} to solve the ground state.
 \end{exmp}
 
 \begin{figure}[ht]
 	\centering
 	\includegraphics[scale=0.4]{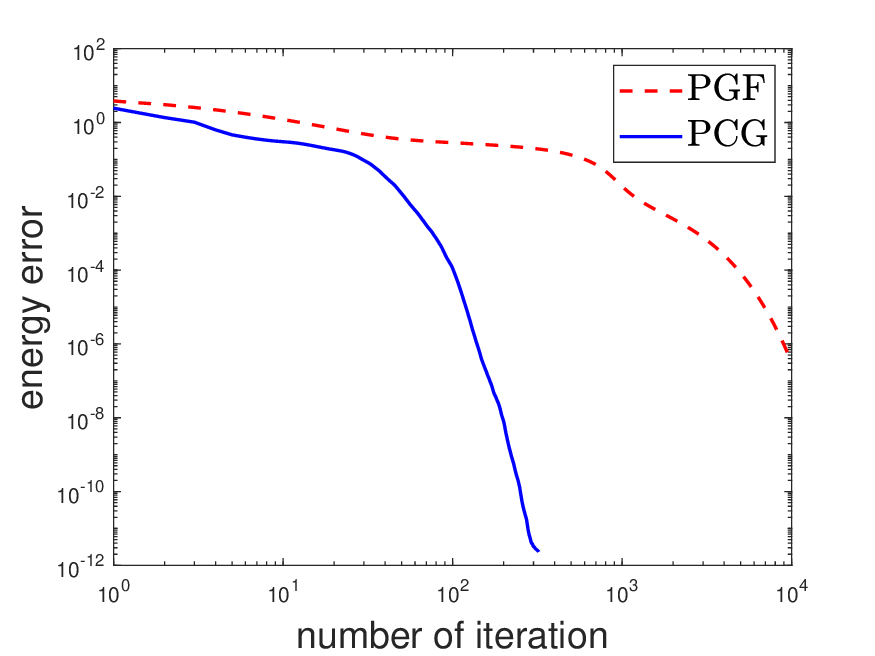}
 	\hspace{-0.22cm}
 	\includegraphics[scale=0.4]{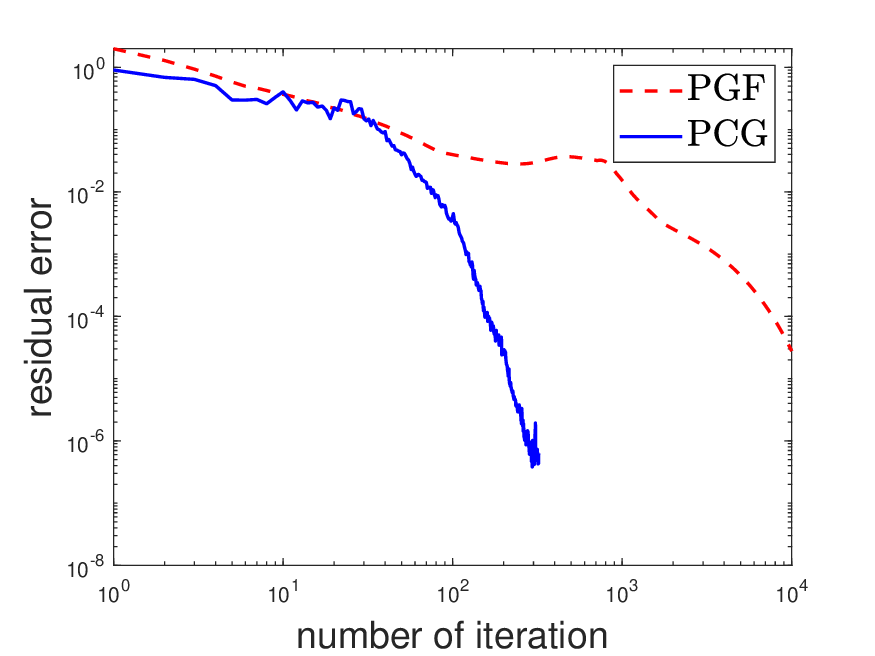}
 	\hspace{-0.22cm}
 	\caption{Residual error and the energy error against the number of iterations for PCG and PGF for 2D BEC in \textbf{Example \ref{perf-comp}}.}
 	\label{fig-comparison}
 \end{figure}
 
 We take computational domain $\mathcal{D}=[-12,12]^2$ and the grid number $N = 128$. In the PGF method, we apply backward/forward Euler Fourier spectral scheme with time step $\Delta t = 0.1$ and the same stopping criterion as in Algorithm \ref{algorithm1}. The residual error and the energy error $\vert \mathcal{E}(\Phi^n) - \mathcal{E}_{\rm g} \vert$ against the number of iterations are plotted in Fig.~\ref{fig-comparison} for the two methods, where $\mathcal{E}_{\rm g}$ is obtained as in Example \ref{accuracy}. From Fig.~\ref{fig-comparison}, it can be seen that our algorithm converges far more rapidly than the PGF method, and the former reaches convergence in 323 iterations (CPU time: 1.998 s) and the latter in 18168 iterations (CPU time: 78.608 s).

 \begin{exmp}\label{efficiency}
 	Here, we investigate the effect of different initial guesses and compare the performance of PCG (Algorithm \ref{algorithm1}) and CM-PCG (Algorithm \ref{algorithm3}). We specify the following parameters: $\gamma = 0.3$, $c_0 = 50$ and $c_1 = 0.5$. 
 \end{exmp}
 
 We take computational domain $\mathcal{D}=[-12,12]^2$. For PCG method, we set the mesh fixed with the grid number $N = 256$. For CM-PCG, we set the coarsest and finest meshes with grid number $N = 64$ and $N = 256$, respectively. The stopping criterion \eqref{stop3} is applied for both Algorithm \ref{algorithm1} and Algorithm \ref{algorithm3} (for all meshes) with $\varepsilon_{\rm tol} = 10^{-14}$. 
 
 \begin{table}[h]
 	\centering
 	\renewcommand\arraystretch{1.4}
 	\caption{PCG with fixed mesh ($N = 256$) and CM-PCG ($N = 64$ to $N = 256$): converged energies and the CPU time (in seconds) for the solution with the lowest energy (underlined) in \textbf{Example \ref{efficiency}}.}
 	\label{table_cpu}
 	\begin{tabular}{cccccccccccc}
 		\hline
 		\multicolumn{12}{c}{\textbf{PCG with fixed mesh}} \\
 		\hline
 		$\Omega$ & (a) & (b) & ($\bar{b}$) & (c) & ($\bar{c}$) & (d) & ($\bar{d}$) & (e) & ($\bar{e}$) & (f) & CPU \\
 		\hline
 		0.6 & 2.2851 & \uline{2.2833} & 2.2851 & 2.2833 & 2.2833 & 2.2833 & 2.2833 & 2.2833 & 2.2833 & 2.2851 & 1058 \\
 		0.7 & 2.0437 & 2.0437 & 2.0437 & 2.0437 & 2.0437 & 2.0437 & 2.0437 & \uline{2.0437} & 2.0437 & 2.0538 & 5740 \\
 		0.8 & 1.6806 & 1.6806 & \uline{1.6806} & 1.6806 & 1.6806 & 1.6806 & 1.6808 & 1.6808 & 1.6808 & 1.6806 & 1964 \\
 		0.9 & 1.0520 & 1.0507 & 1.0520 & 1.0507 & 1.0492 & 1.0493 & 1.0492 & \uline{1.0492} & 1.0492 & 1.0507 & 11090 \\
 		\hline
 		\multicolumn{12}{c}{\textbf{CM-PCG}} \\
 		\hline
 		$\Omega$ & (a) & (b) & ($\bar{b}$) & (c) & ($\bar{c}$) & (d) & ($\bar{d}$) & (e) & ($\bar{e}$) & (f) & CPU \\
 		\hline
 		0.6 & 2.2851 & \uline{2.2833} & 2.2851 & 2.2833 & 2.2833 & 2.2833 & 2.2833 & 2.2833 & 2.2833 & 2.2833 & 7 \\
 		0.7 & 2.0437 & 2.0437 & 2.0437 & 2.0437 & 2.0437 & 2.0437 & 2.0437 & 2.0437 & 2.0437 & \uline{2.0437} & 116 \\
 		0.8 & 1.6806 & 1.6806 & 1.6808 & 1.6806 & 1.6806 & \uline{1.6806} & 1.6806 & 1.6808 & 1.6806 & 1.6806 & 46 \\
 		0.9 & 1.0492 & 1.0492 & \uline{1.0492} & 1.0516 & 1.0492 & 1.0492 & 1.0492 & 1.0492 & 1.0492 & 1.0492 & 1340 \\
 		\hline
 	\end{tabular}
 \end{table} 
 For the sake of simplicity in presentation here, we only present 10 types of initial guesses, where each component is identical. Tab.~\ref{table_cpu} lists the energies obtained by the PCG with a fixed mesh and the CM-PCG for different initial guesses $(a)$-$(f)$ and rotational frequency $\Omega$. The stationary states $\Phi_{\rm g}(\mathbf{x})$ with the lowest energies are underlined and the corresponding CPU time is listed in the last column. From this table and additional numerical results not shown here for brevity, we conclude that: different initial guesses affect the final converged stationary states and some may lead to local minima. The CM-PCG approach is more efficient than the PCG with a fixed mesh, and showcases a remarkable speedup ratio of about 8 to 150 times.

 \begin{exmp}\label{Effect-local}
 	In this example, we consider the effects of spin-independent and spin-exchange interactions. We set $\Omega = 0.4$ and $\gamma = 0.8$. 
 	The effects of local interactions are studied in the following six cases:\\
 	\begin{tabular}{ll}
 		\textbf{ Case 1. } $c_0 = 100$ and $c_1 = 1$. & \textbf{ Case 2. } $c_0 = 200$ and $c_1 = 1$. \\
 		\textbf{ Case 3. } $c_0 = 600$ and $c_1 = 1$. & \textbf{ Case 4. } $c_0 = 200$ and $c_1 = -20$. \\
 		\textbf{ Case 5. } $c_0 = 200$ and $c_1 = 10$. & \textbf{ Case 6. } $c_0 = 200$ and $c_1 = 30$. \\
 	\end{tabular}
 \end{exmp}
 
 \begin{figure}[h]
 	\centering
 	\includegraphics[scale=0.35]{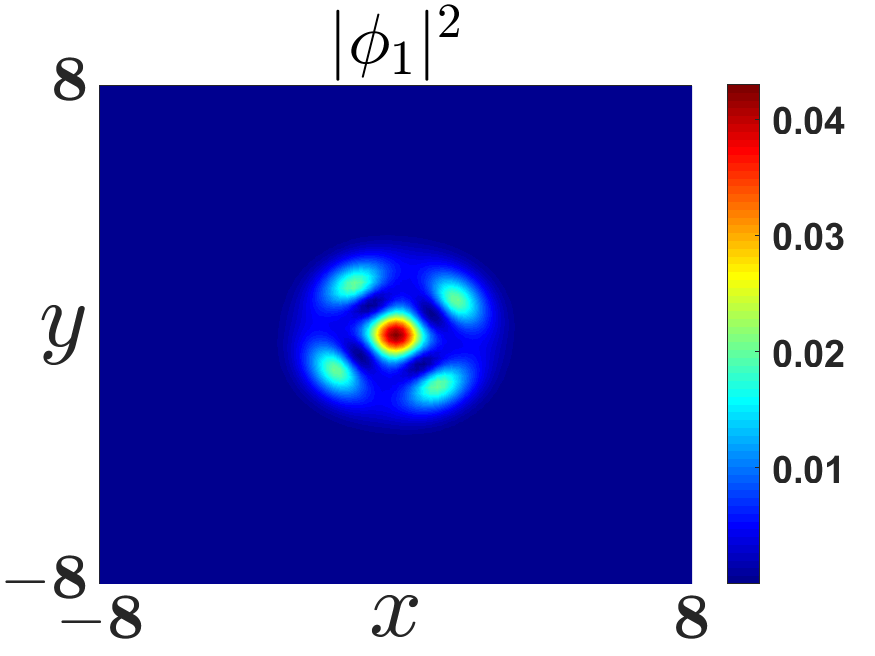}
 	\hspace{-0.22cm}
 	\includegraphics[scale=0.35]{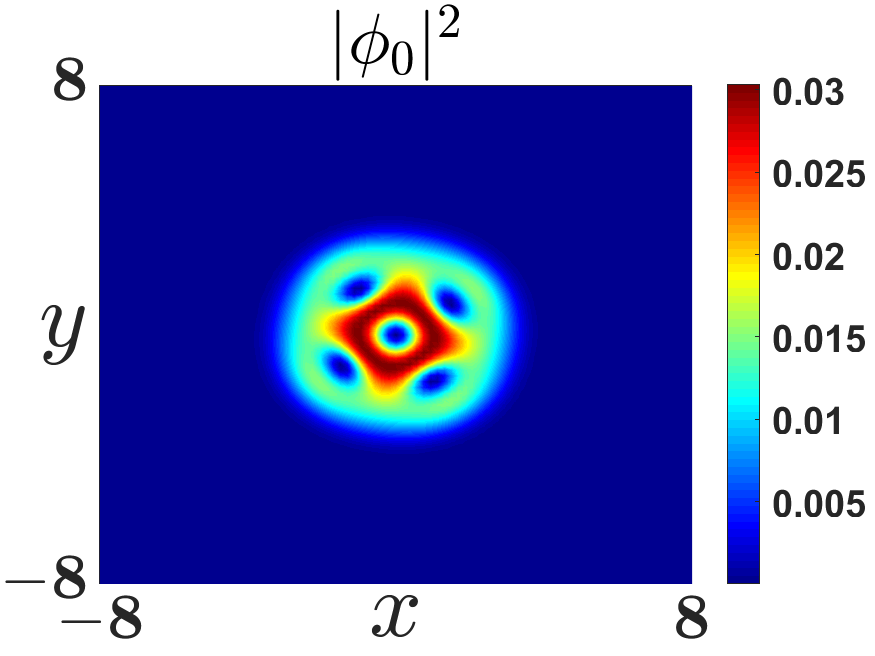}
 	\hspace{-0.22cm}
 	\includegraphics[scale=0.35]{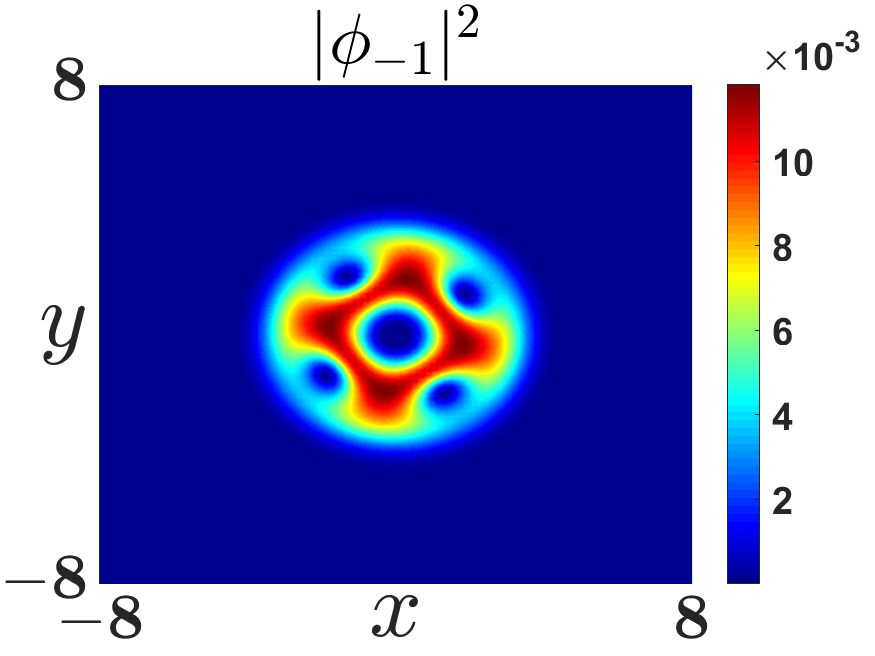}
 	\\
 	\centering
 	\includegraphics[scale=0.35]{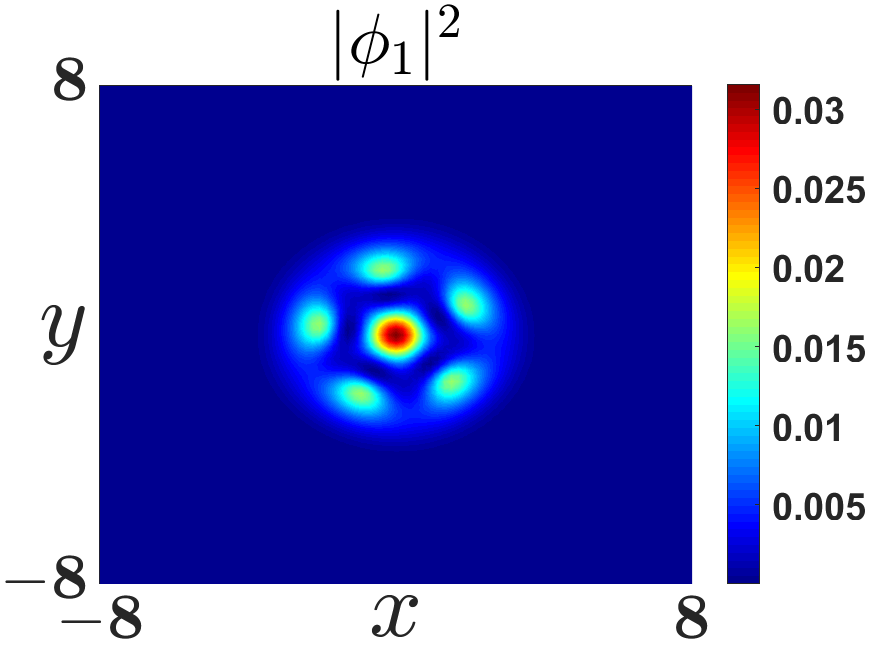}
 	\hspace{-0.22cm}
 	\includegraphics[scale=0.35]{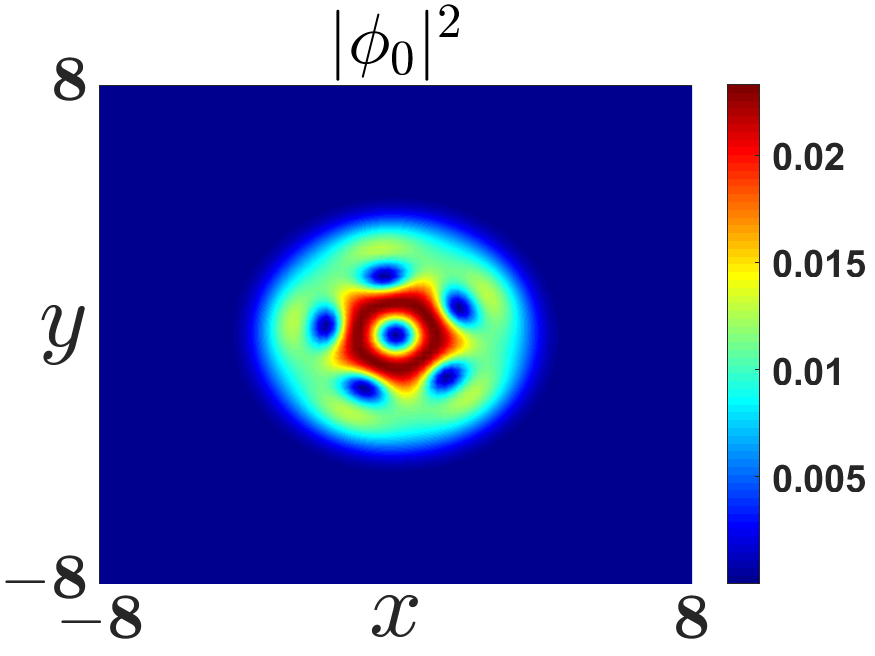}
 	\hspace{-0.22cm}
 	\includegraphics[scale=0.35]{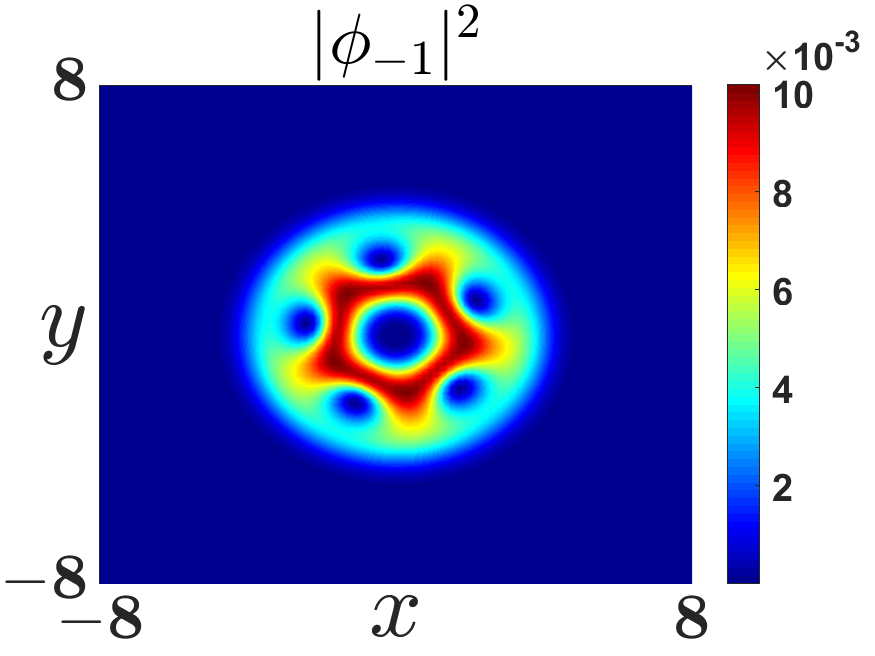}
 	\\
 	\centering
 	\includegraphics[scale=0.35]{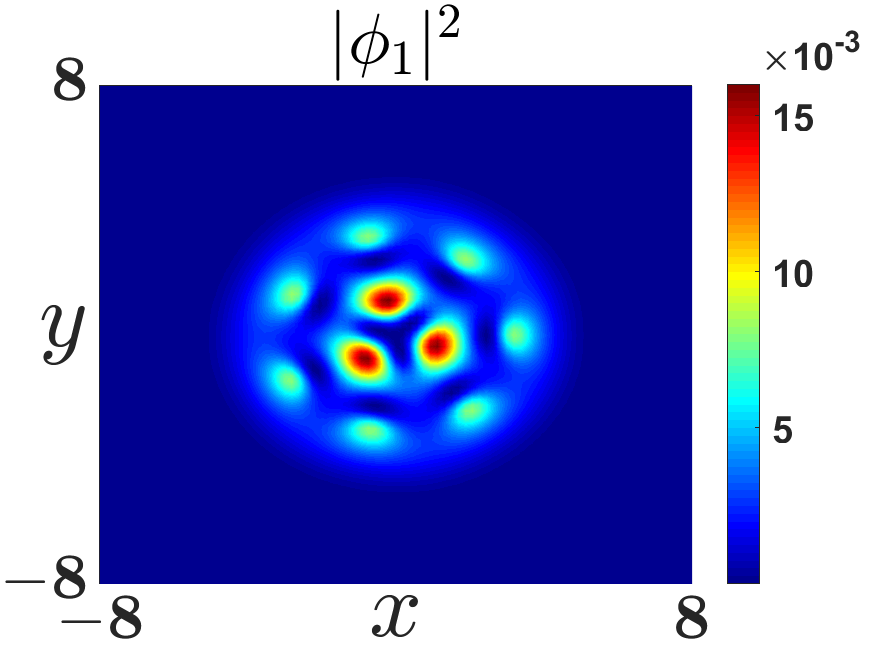}
 	\hspace{-0.22cm}
 	\includegraphics[scale=0.35]{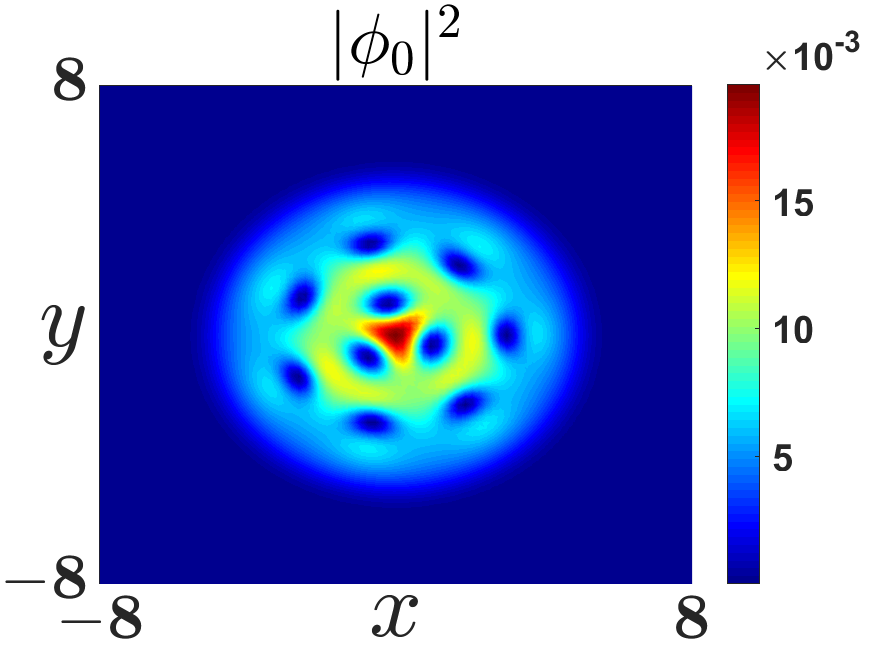}
 	\hspace{-0.22cm}
 	\includegraphics[scale=0.35]{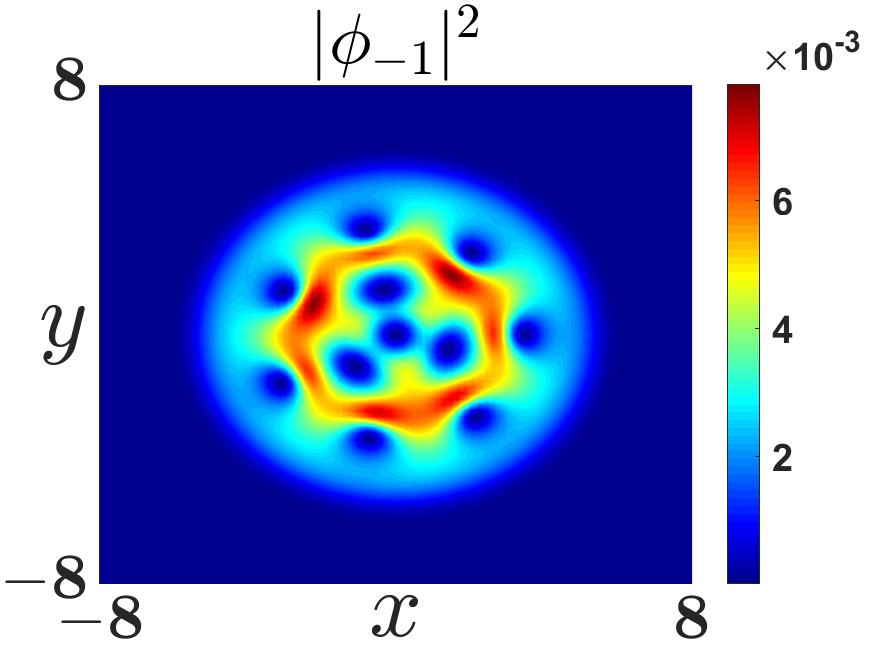}
 	\\
 	\caption{Contour plots of the densities for \textbf{Case 1, 2, 3} (from top to bottom, $c_0$=100,200,600) in \textbf{Example \ref{Effect-local}} to investigate the effect of spin-independent interaction.}
 	\label{fig_int_c0}
 \end{figure}\vspace{18pt}
 
 \begin{figure}[h]
 	\centering
 	\includegraphics[scale=0.35]{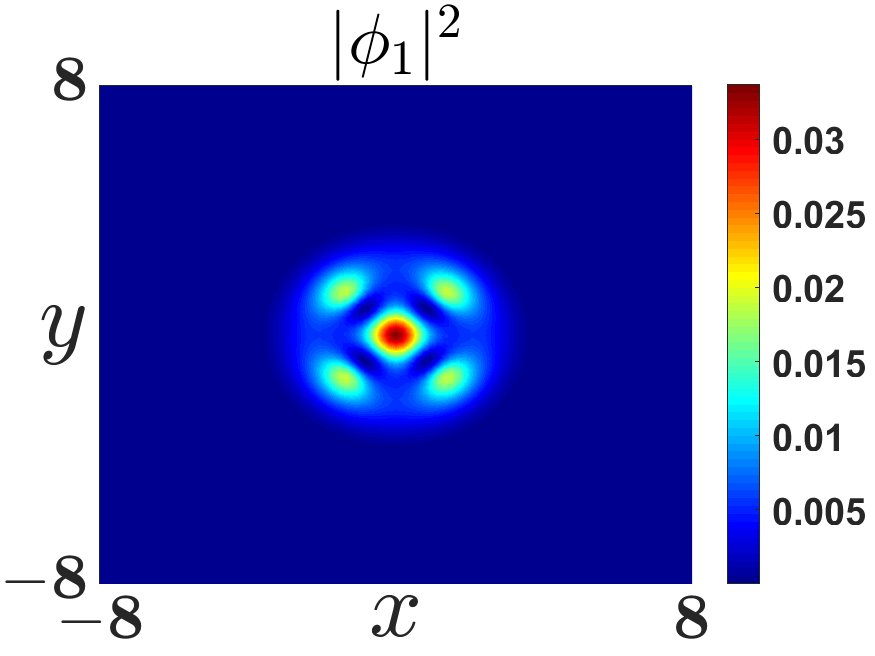}
 	\hspace{-0.22cm}
 	\includegraphics[scale=0.35]{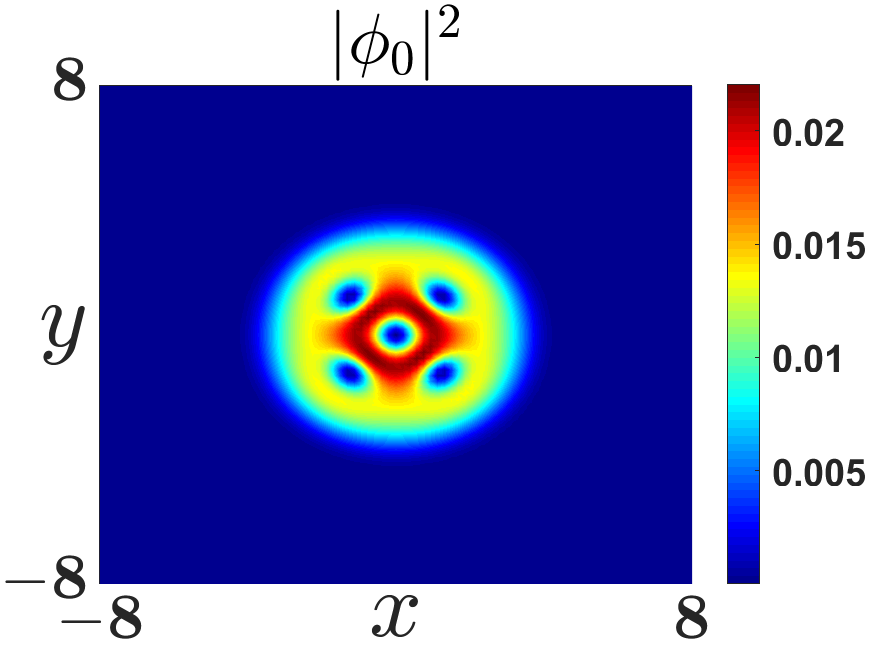}
 	\hspace{-0.22cm}
 	\includegraphics[scale=0.35]{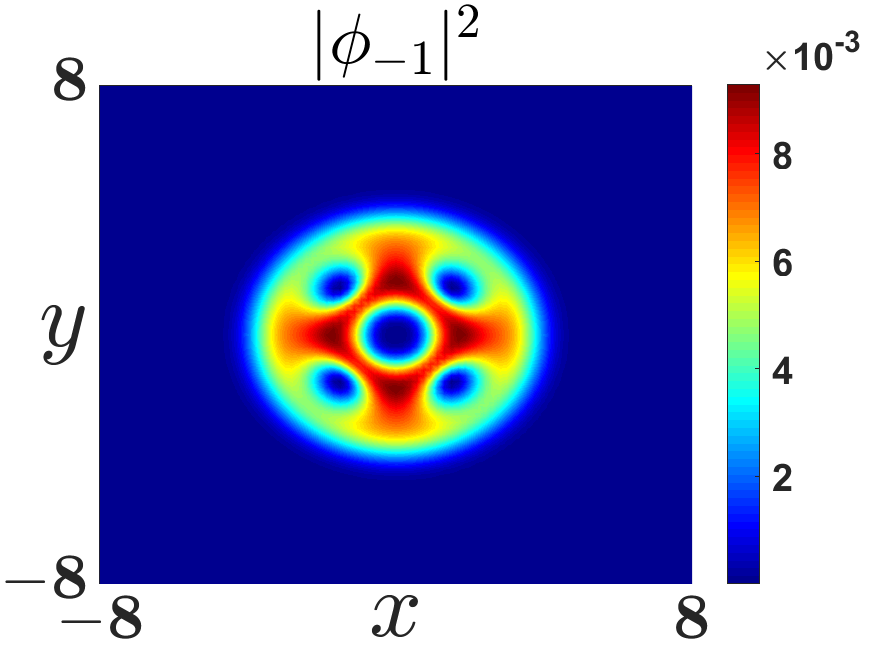}
 	\\
 	\centering
 	\includegraphics[scale=0.35]{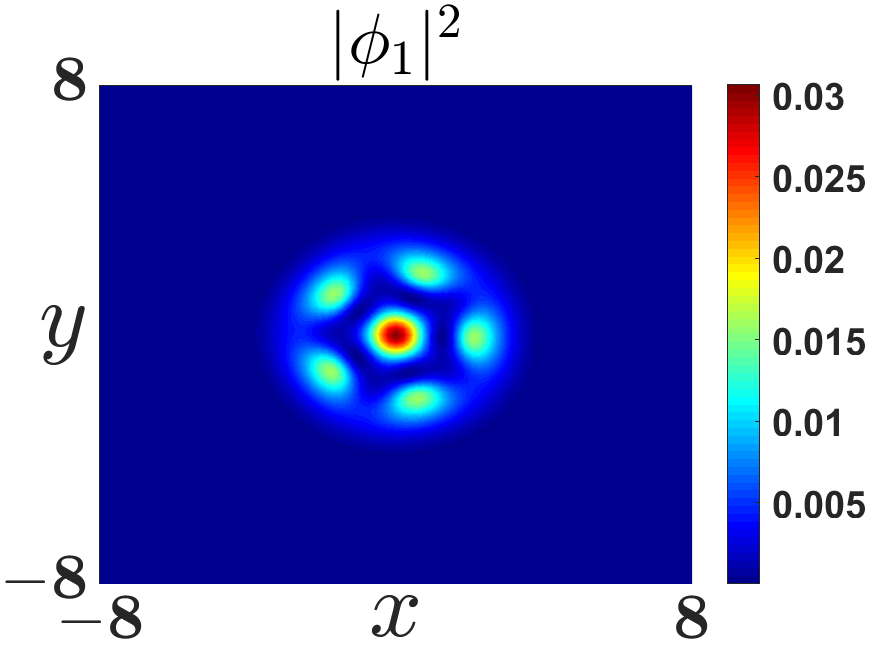}
 	\hspace{-0.22cm}
 	\includegraphics[scale=0.35]{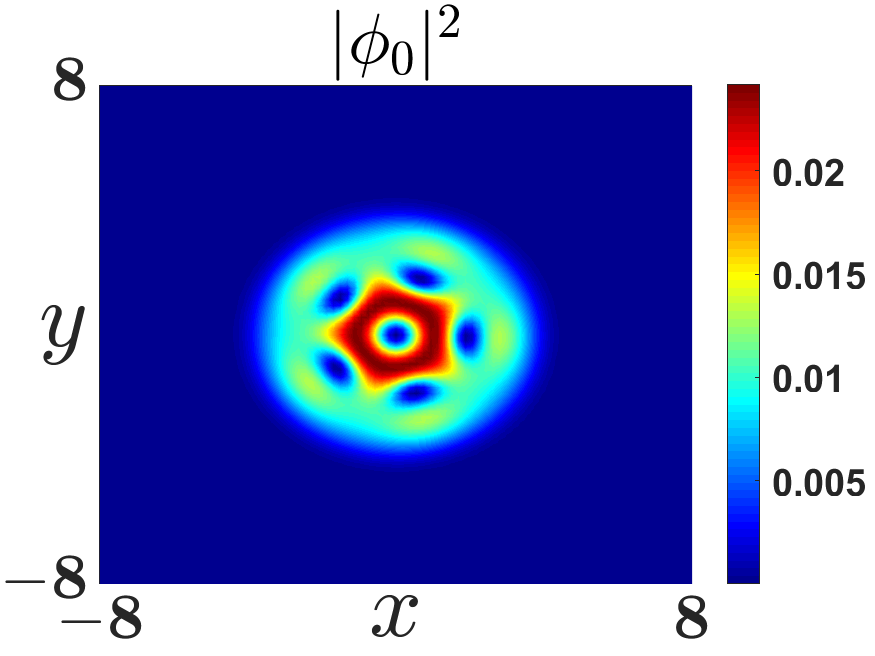}
 	\hspace{-0.22cm}
 	\includegraphics[scale=0.35]{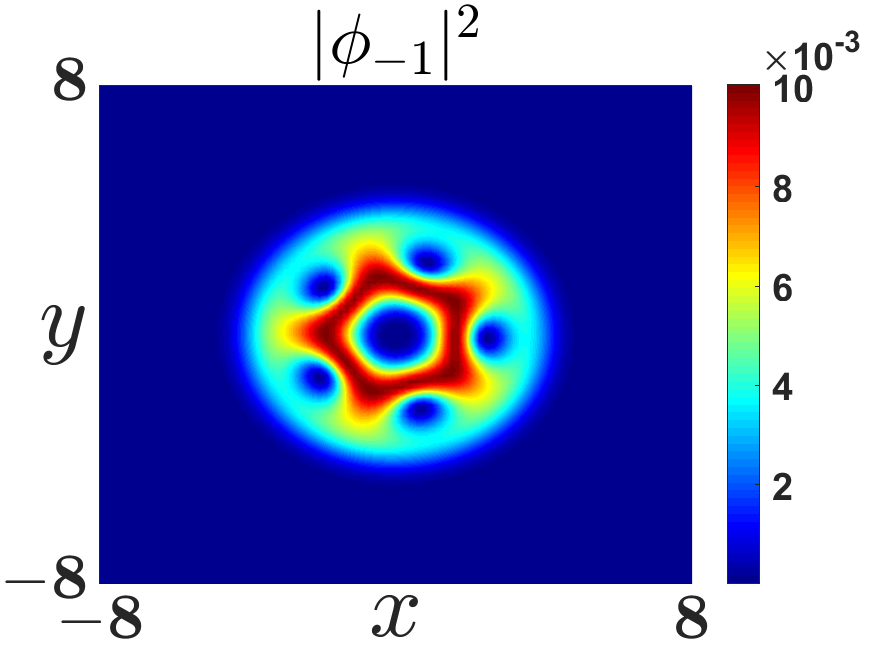}
 	\\
 	\centering
 	\includegraphics[scale=0.35]{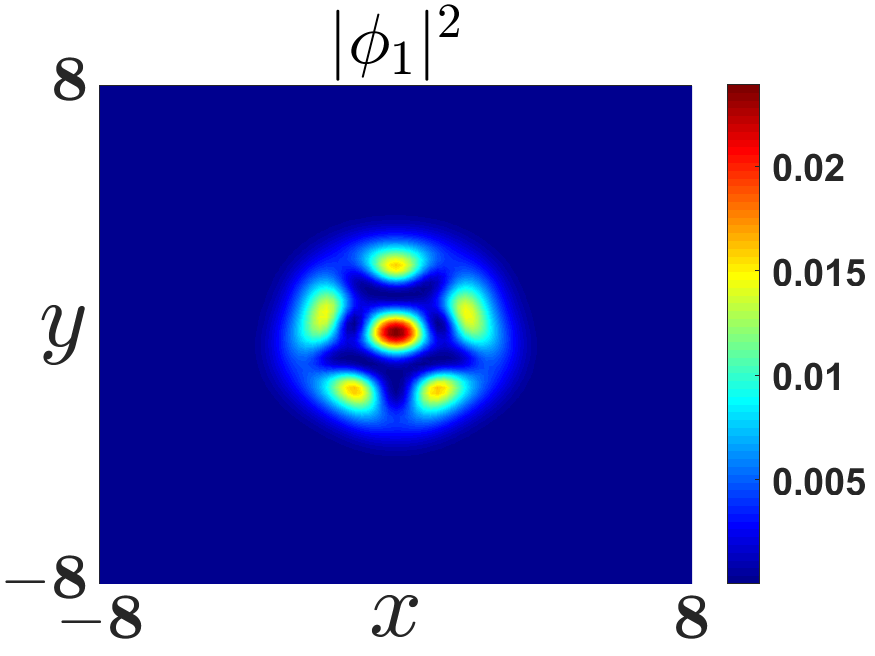}
 	\hspace{-0.22cm}
 	\includegraphics[scale=0.35]{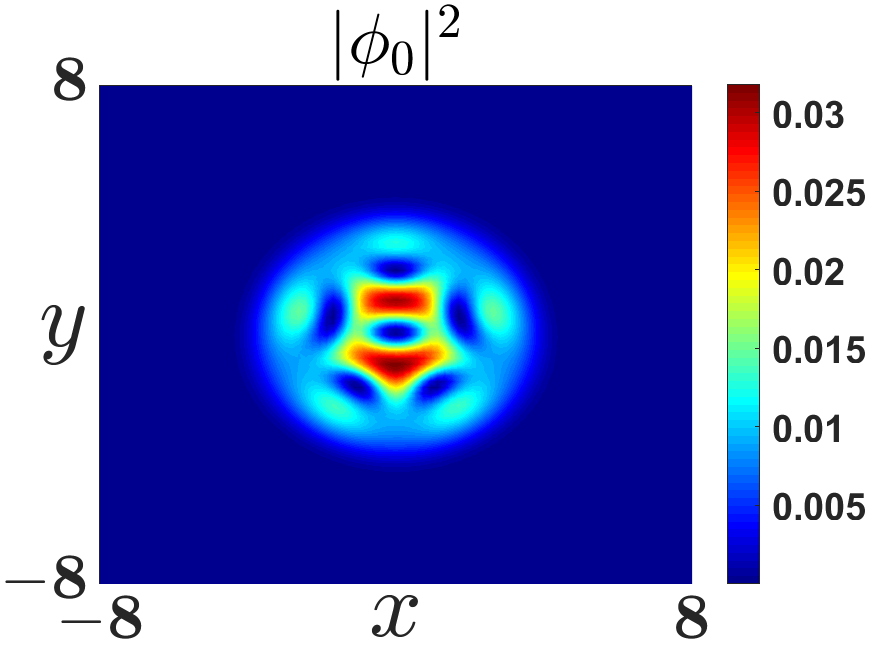}
 	\hspace{-0.22cm}
 	\includegraphics[scale=0.35]{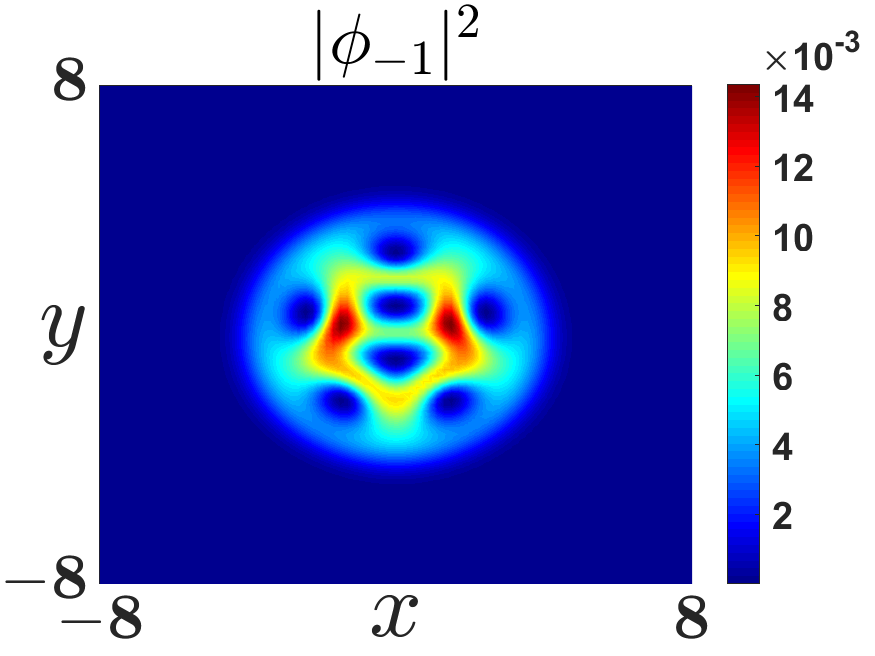}
 	\\
 	\caption{Contour plots of the densities for \textbf{Case 4, 5, 6} (from top to bottom, $c_1$=-20,10,30) in \textbf{Example \ref{Effect-local}} to investigate the effect of spin-exchange interaction.}
 	\label{fig_int_c1}
 \end{figure}
 
 The computational domain is $\mathcal{D} = [-12,12]^2$ and the grid number $N = 256$. 
 Fig.~\ref{fig_int_c0} presents the density plots for \textbf{Case 1-3}, illustrating the influence of the spin-independent interaction parameter $c_0$. Similarly, Fig.~\ref{fig_int_c1} displays the results for \textbf{Case 4-6}, highlighting the effects of the spin-exchange interaction parameter $c_1$. From Fig.~\ref{fig_int_c0} and Fig.~\ref{fig_int_c1}, several interesting phenomena can be observed. In particular, as shown in Fig.~\ref{fig_int_c0}, an increase in spin-independent interaction strength $c_0$ leads to a greater number of vortices, which is attributed to the enhanced repulsive interactions among atoms. Moreover, as demonstrated in Fig.~\ref{fig_int_c1}, increasing the spin-exchange interaction strength $c_1$ also results in an increase in the number of vortices. Additionally, numerical results show that all the ground states exhibit axial symmetry.
 
 \begin{exmp}\label{Effect-Rot}
 	Here, we consider the effect of rotation on the ground state in harmonic potential and harmonic-plus-quartic potential, respectively. The harmonic-plus-quartic potential is defined as
 	\begin{equation}\label{harm-quart}
 		V\left(\mathbf{x}\right) = -0.2 \left(x^2 + y^2\right) + 0.5\left(x^2 + y^2\right)^2.
 	\end{equation} 
 	We set the parameters as $c_1 = 1$, $\gamma = 0.8$, and test the following two cases:
 	\begin{enumerate}[label=\textbf{Case \arabic*. }, leftmargin=*, align=left]
 		\item Fix harmonic potential \eqref{harmonic} with $\gamma_x = \gamma_y = 1$, $c_0 = 100$, and vary $\Omega$ among $0$, $0.45$, $0.90$.
 		\item Fix harmonic-plus-quartic potential \eqref{harm-quart}, $c_0 = 1000$, and vary $\Omega$ among $3.0$, $6.0$, $14.0$.
 	\end{enumerate}
 \end{exmp}
 
 \begin{figure}[h]
 	\centering
 	\includegraphics[scale=0.35]{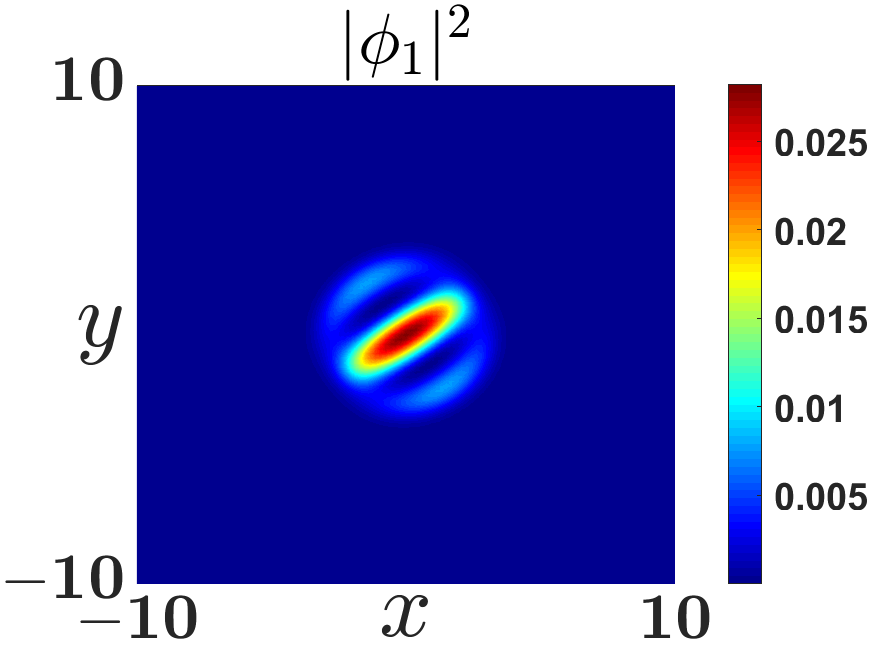}
 	\hspace{-0.22cm}
 	\includegraphics[scale=0.35]{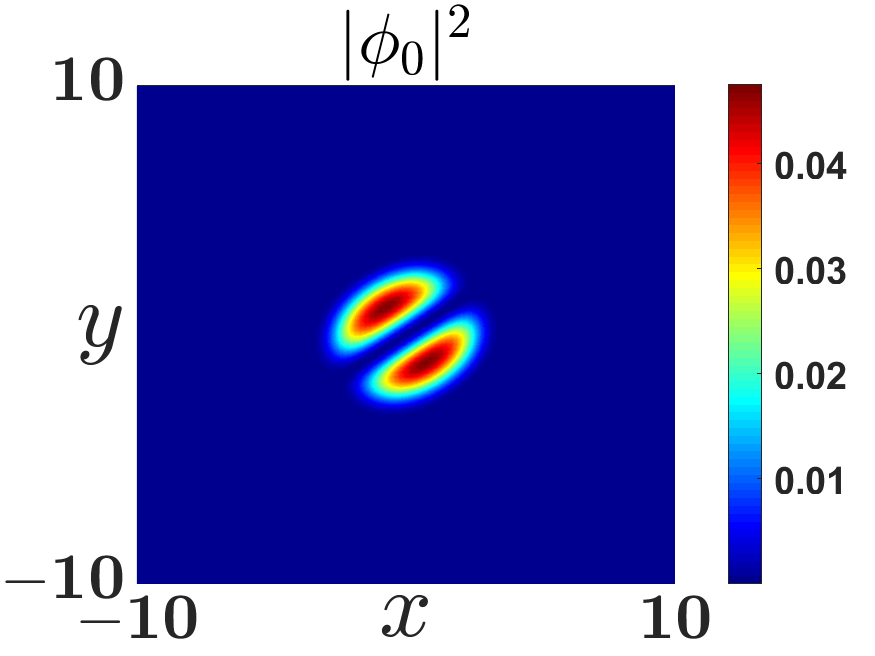}
 	\hspace{-0.22cm}
 	\includegraphics[scale=0.35]{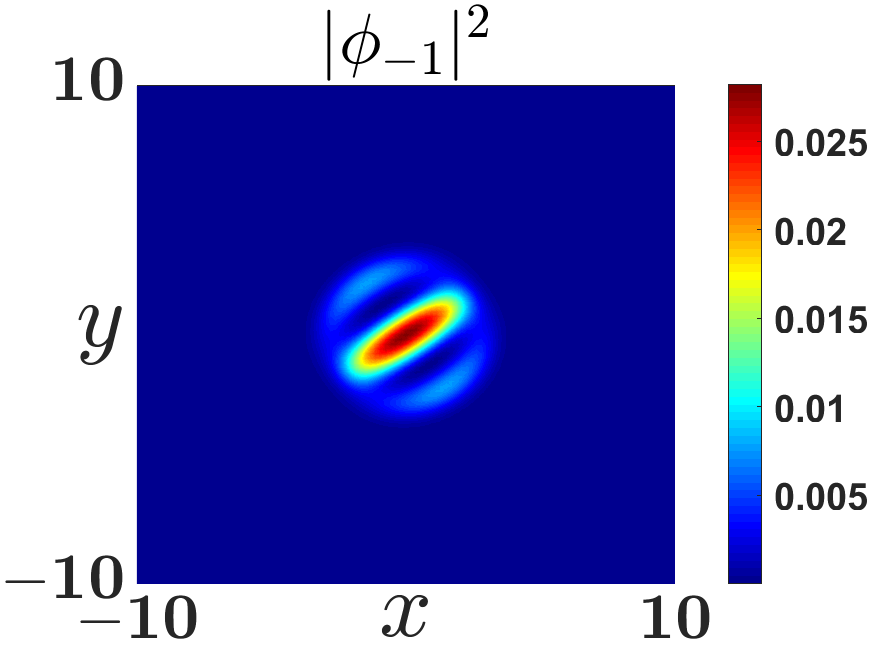}
 	\\
 	\centering
 	\includegraphics[scale=0.35]{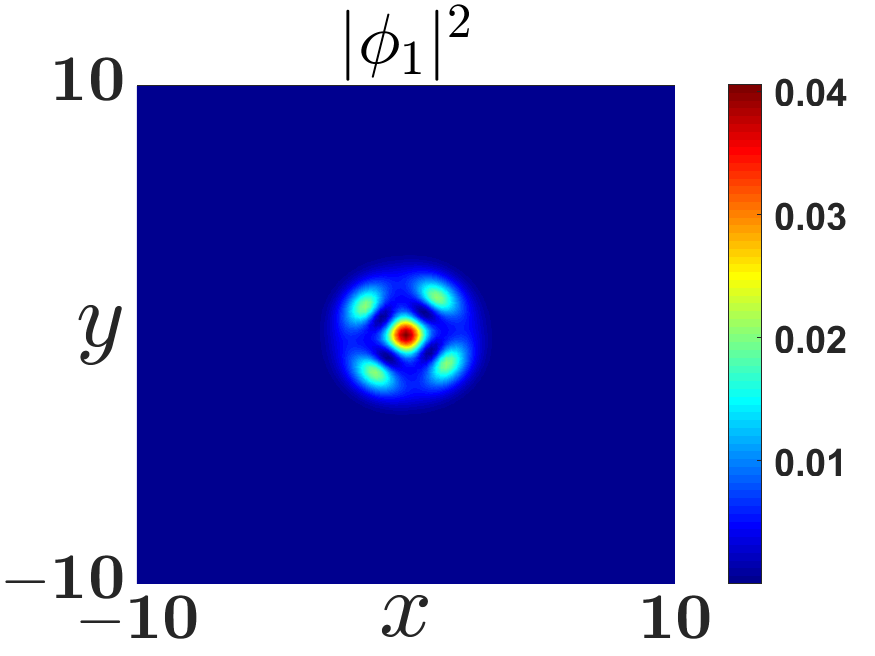}
 	\hspace{-0.22cm}
 	\includegraphics[scale=0.35]{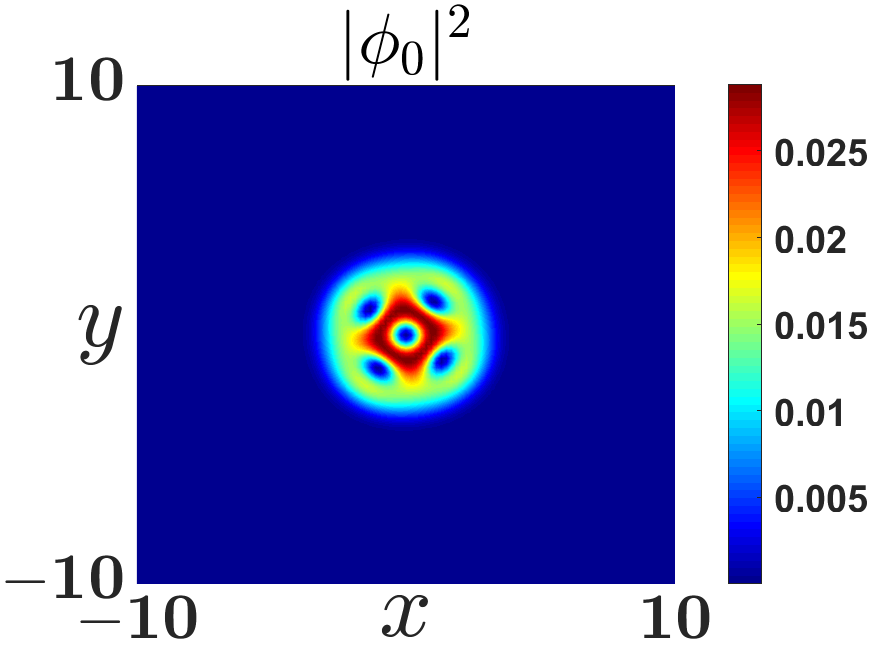}
 	\hspace{-0.22cm}
 	\includegraphics[scale=0.35]{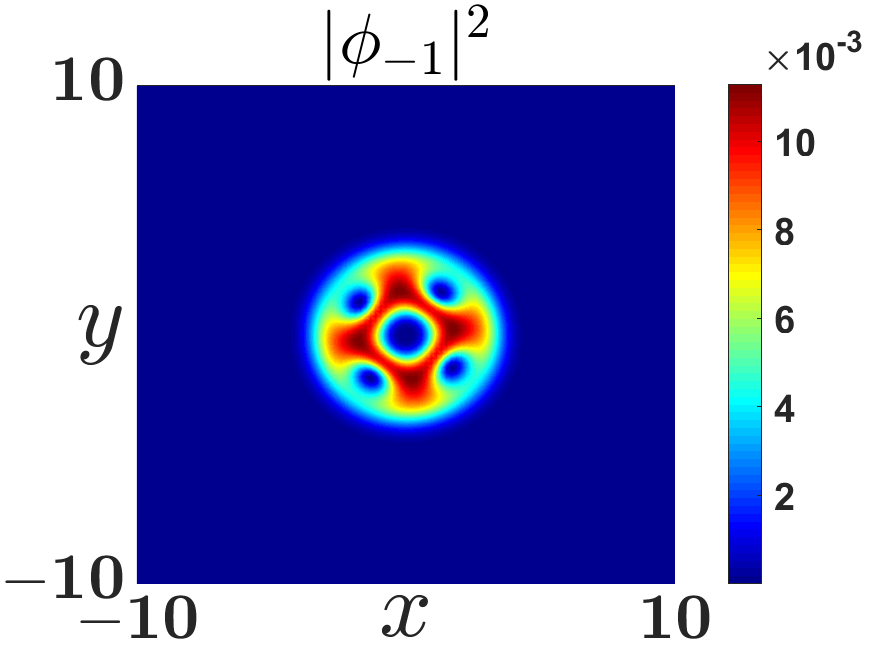}
 	\\
 	\centering
 	\includegraphics[scale=0.35]{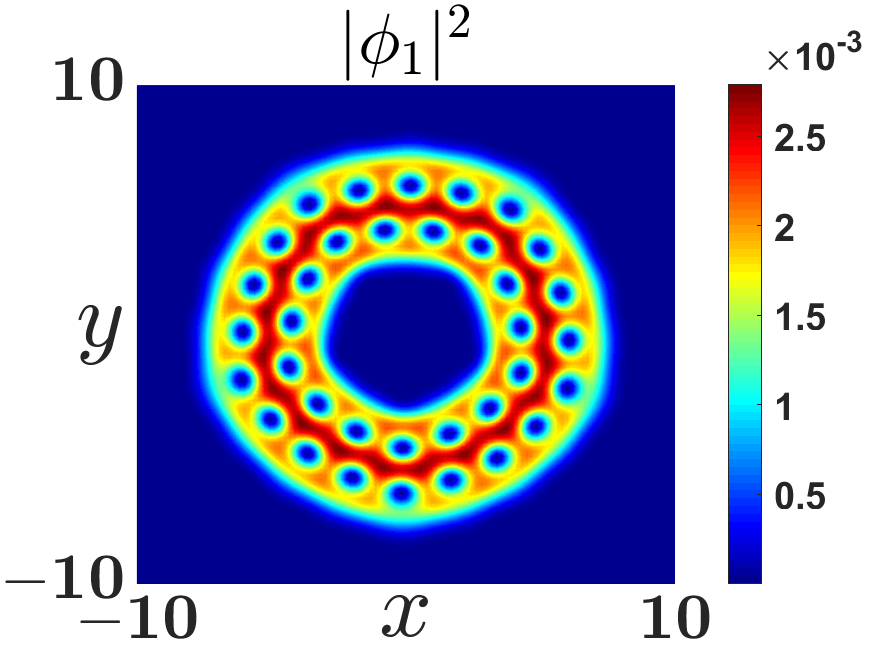}
 	\hspace{-0.22cm}
 	\includegraphics[scale=0.35]{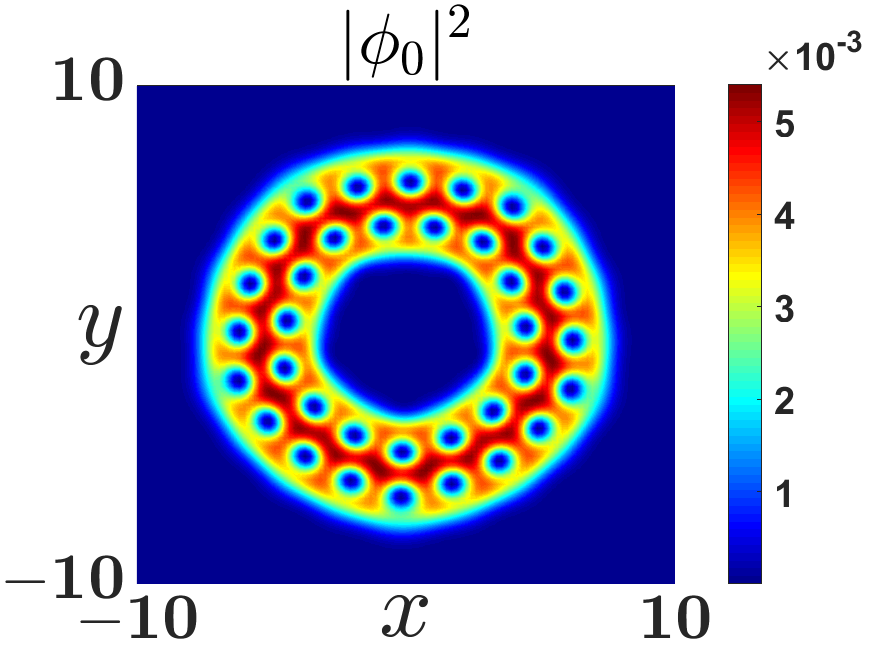}
 	\hspace{-0.22cm}
 	\includegraphics[scale=0.35]{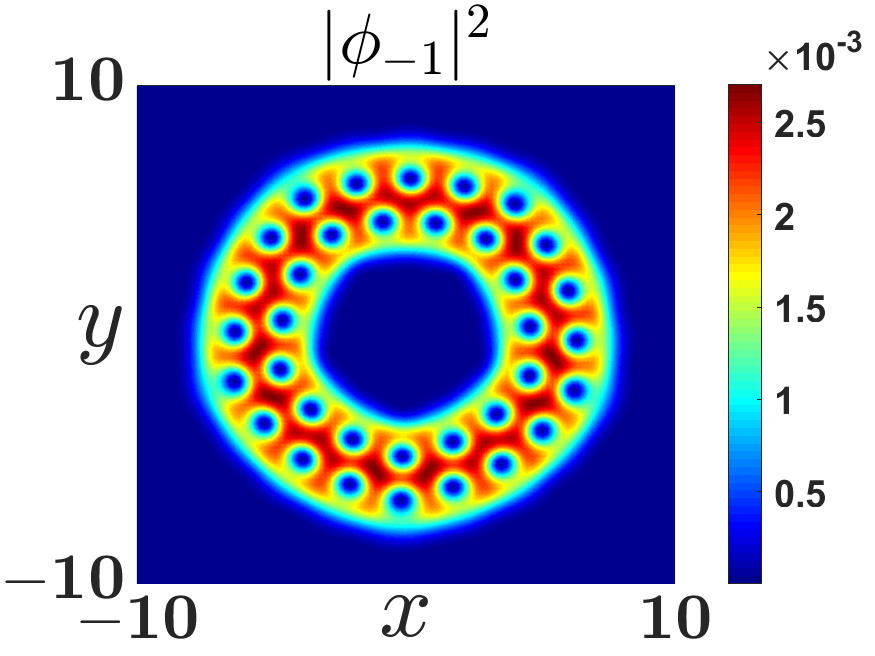}
 	\\
 	\caption{Contour plots of the densities for \textbf{Case 1} (from top to bottom, $\Omega = 0, 0.45, 0.90$) in \textbf{Example \ref{Effect-Rot}} in harmonic potential to investigate the effect of rotation.}
 	\label{fig_rot}
 \end{figure}
 
 \begin{figure}[h]
 	\centering
 	\includegraphics[scale=0.35]{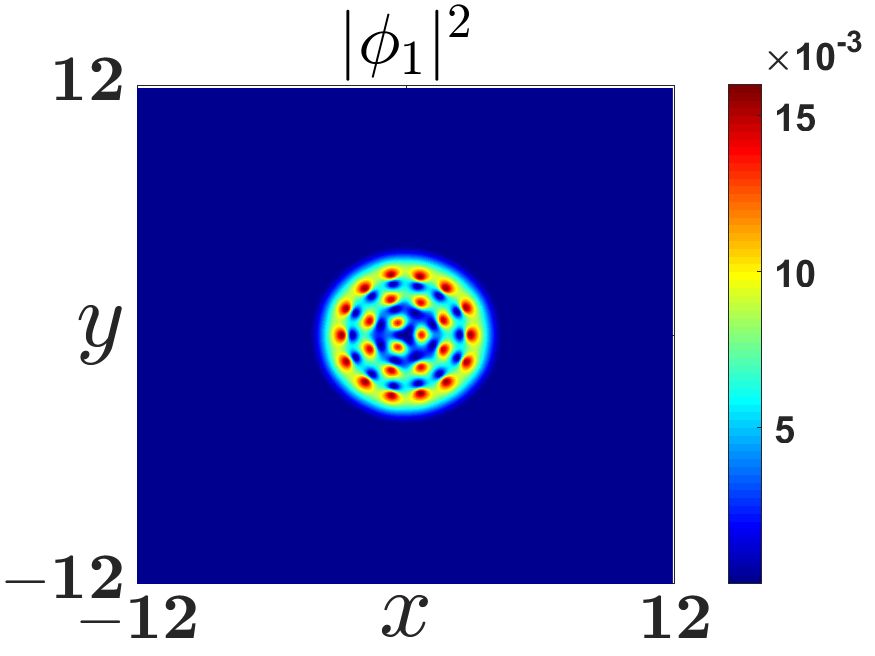}
 	\hspace{-0.22cm}
 	\includegraphics[scale=0.35]{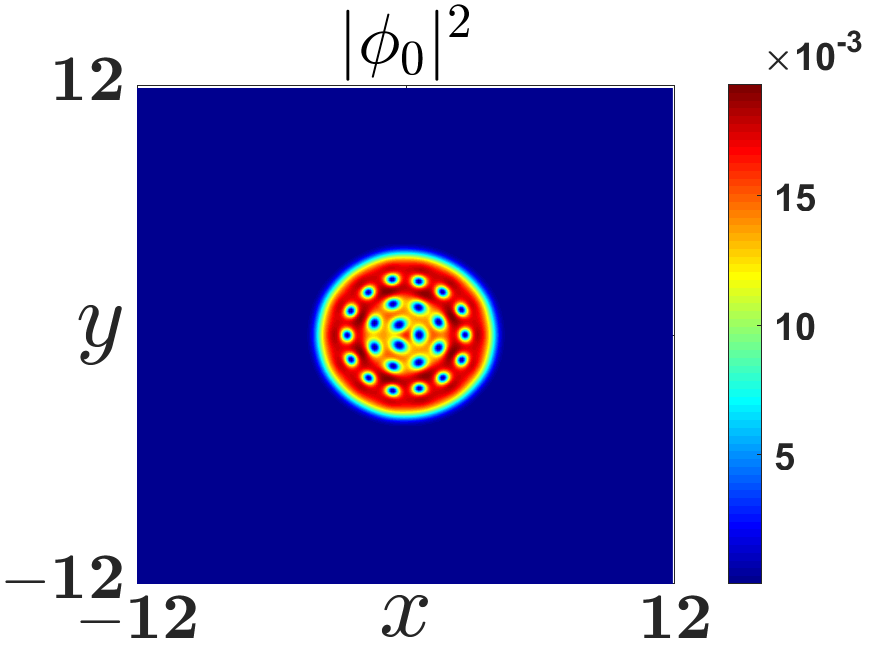}
 	\hspace{-0.22cm}
 	\includegraphics[scale=0.35]{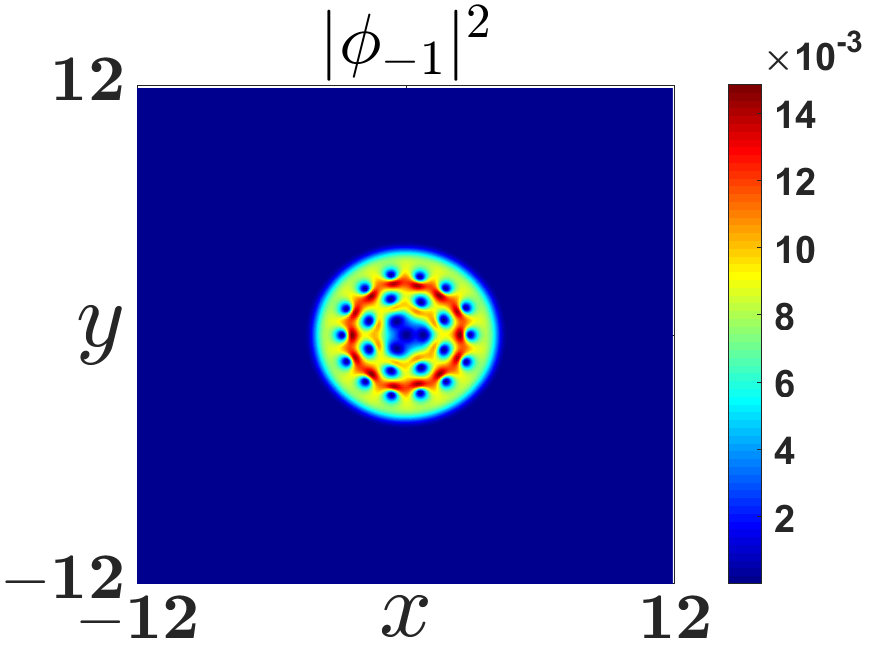}
 	\\
 	\centering
 	\includegraphics[scale=0.35]{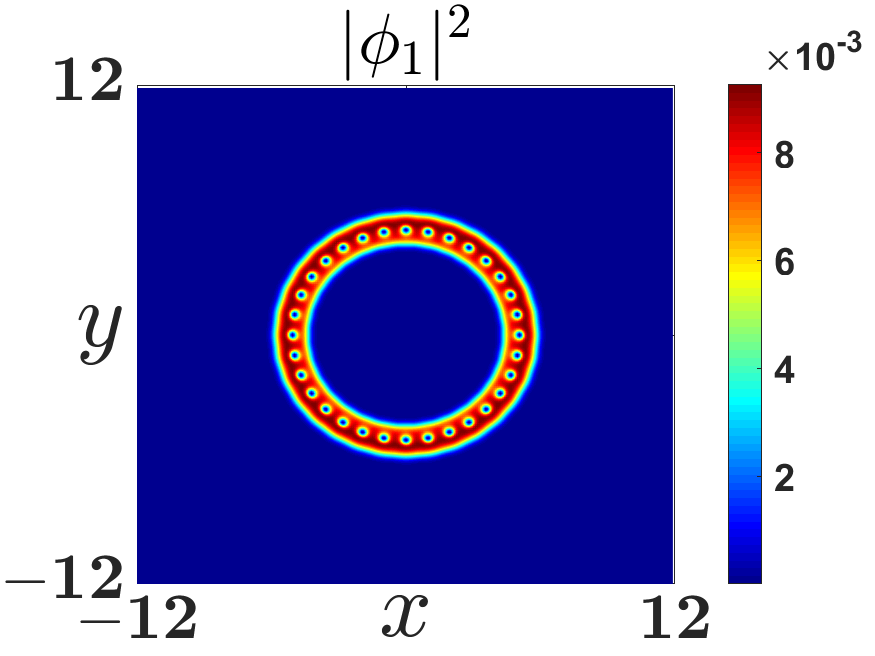}
 	\hspace{-0.22cm}
 	\includegraphics[scale=0.35]{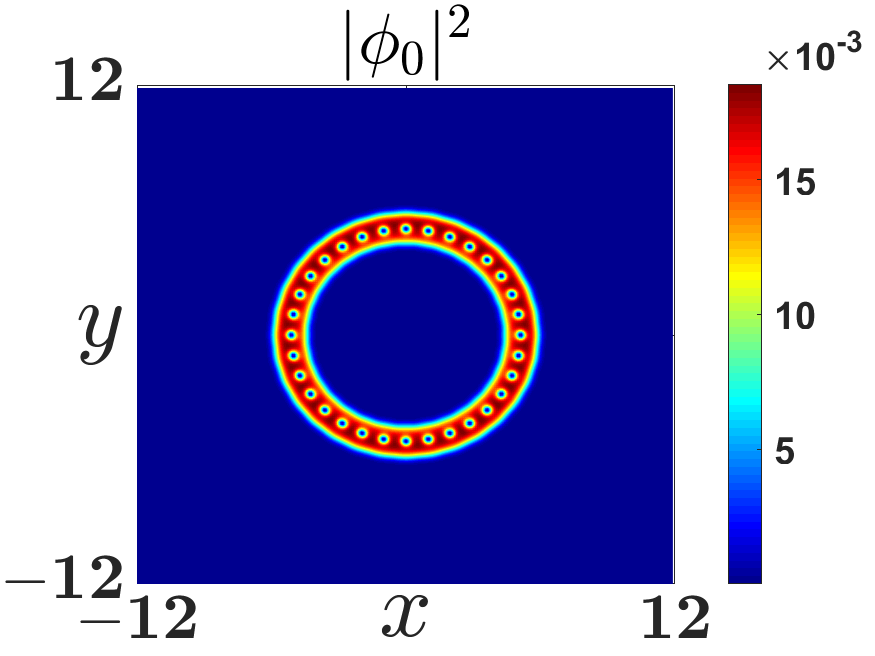}
 	\hspace{-0.22cm}
 	\includegraphics[scale=0.35]{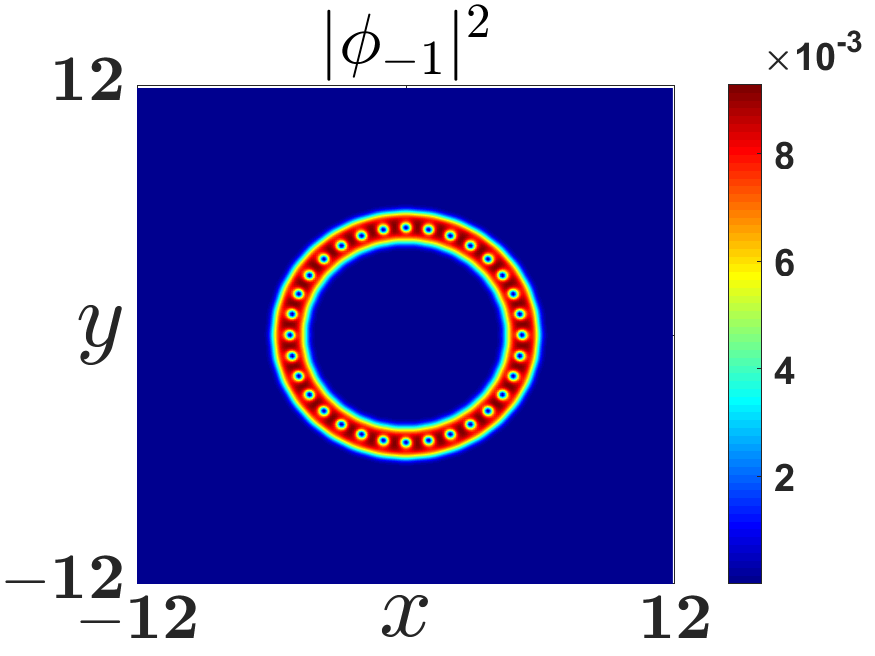}
 	\\
 	\centering
 	\includegraphics[scale=0.35]{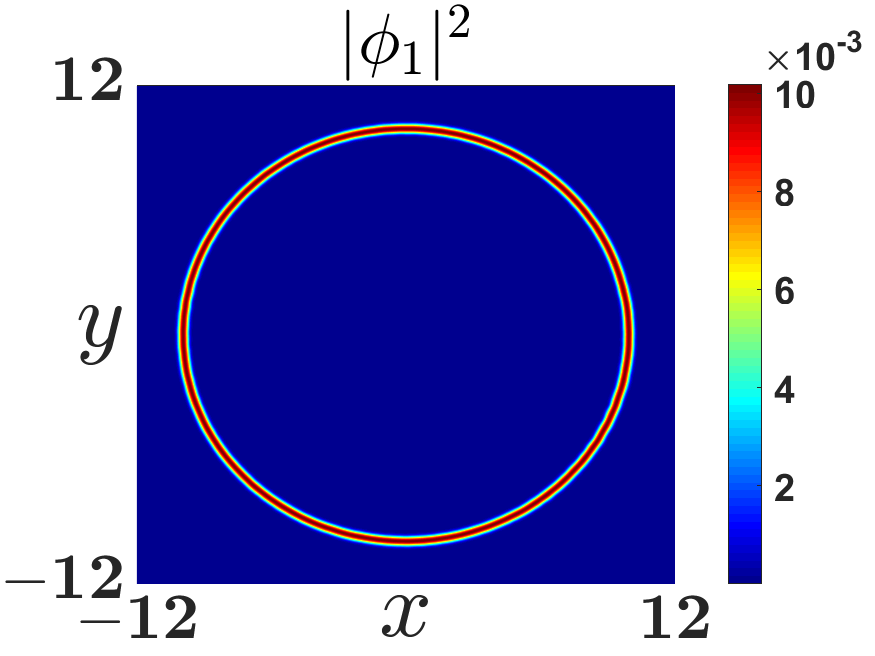}
 	\hspace{-0.22cm}
 	\includegraphics[scale=0.35]{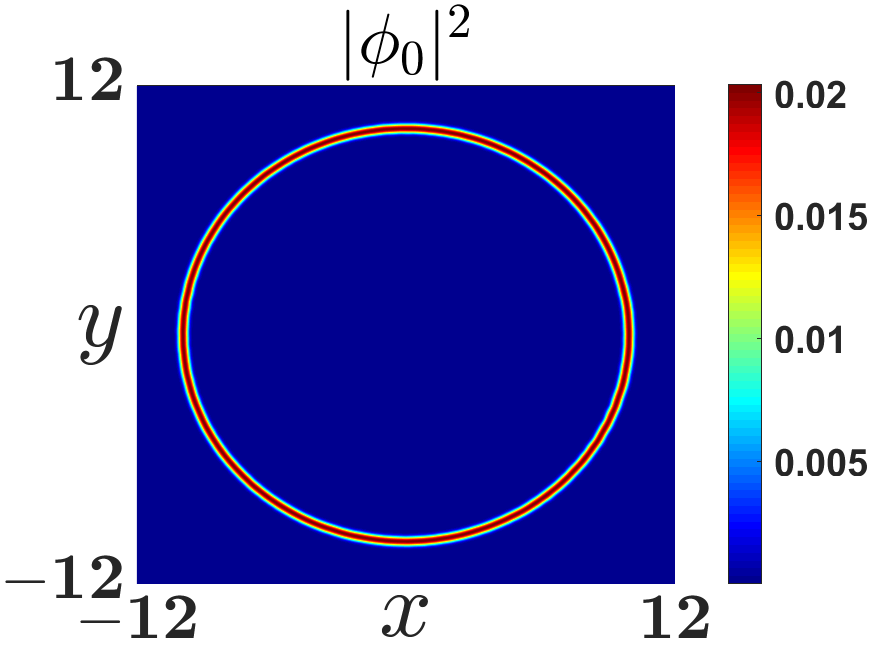}
 	\hspace{-0.22cm}
 	\includegraphics[scale=0.35]{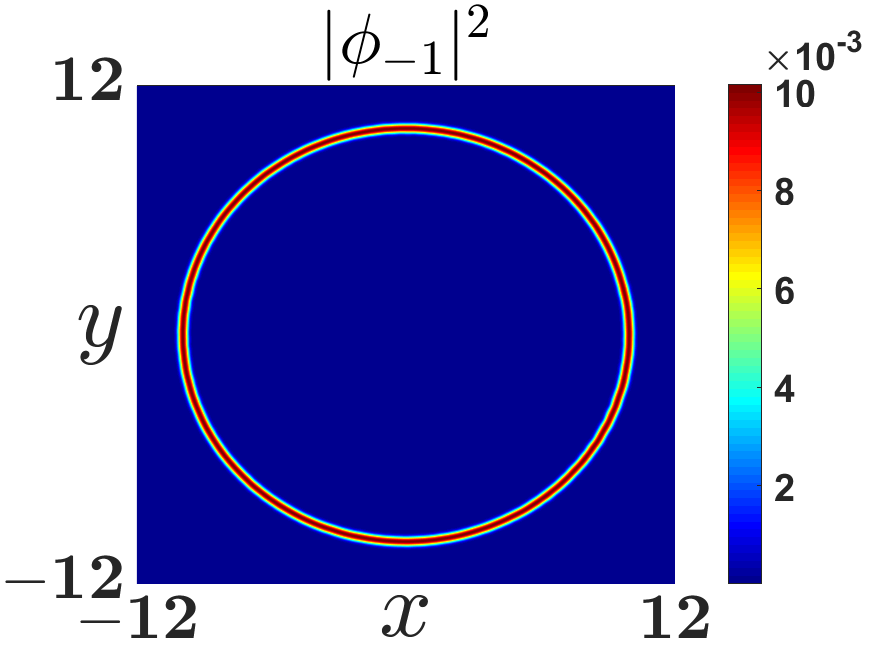}
 	\\
 	\caption{Contour plots of the densities for \textbf{Case 2} (from top to bottom, $\Omega = 3.0, 6.0, 14.0$) in \textbf{Example \ref{Effect-Rot}} in harmonic-plus-quartic potential to investigate the effect of rotation.}
 	\label{fig_rot2}
 \end{figure}
 
 For \textbf{Case 1}, we set $\mathcal{D} = [-12,12]^2$ with $N = 256$; for \textbf{Case 2}, we take $\mathcal{D} = [-20,20]^2$ with $N = 2400$. Fig.~\ref{fig_rot} and Fig.~\ref{fig_rot2} present the densities $\left(\vert \phi_1^{\rm g} \vert^2,\vert \phi_0^{\rm g} \vert^2,\vert \phi_{-1}^{\rm g} \vert^2\right)$ for \textbf{Case 1} and \textbf{Case 2}, respectively. 
 From Fig.~\ref{fig_rot}, we observe that in the harmonic potential, no vortex exists in the condensate for small rotating frequency $\Omega$ and the number of vortices increases as $\Omega$ turns larger. When $\Omega$ is large enough, a giant hole emerges at the center of the condensate \cite{rotSOCvortex}.
 
 While, in the harmonic-plus-quartic potential as shown in Fig.~\ref{fig_rot2}, 
 it is evident that many vortices exist in the condensate due to the large centrifugal force induced by the rotation. As $\Omega$ increases, the structure of the condensate becomes a perfect annulus, with uniformly distributed vortices. When $\Omega$ is increased further, a giant vortex emerges within the condensate.

 \begin{exmp}\label{Effect-SOC}
 	Here we investigate the effect of SOC on the ground state. The parameters are set as: $\Omega = 0.2$, $c_0 = 100$, and $c_1 = -1$. 
 \end{exmp}
 
 \begin{figure}[h]
 	\centering
 	\includegraphics[scale=0.35]{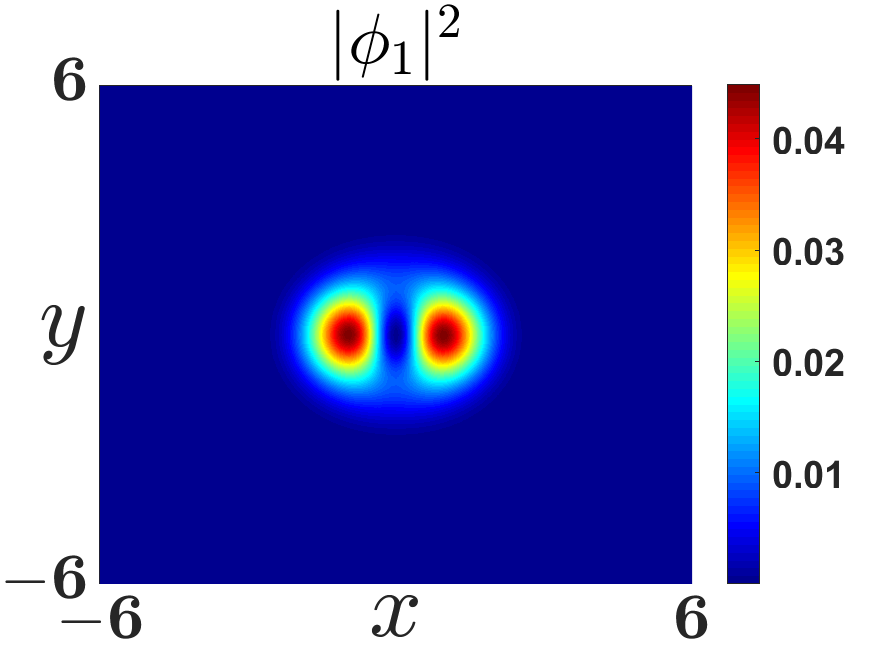}
 	\hspace{-0.22cm}
 	\includegraphics[scale=0.35]{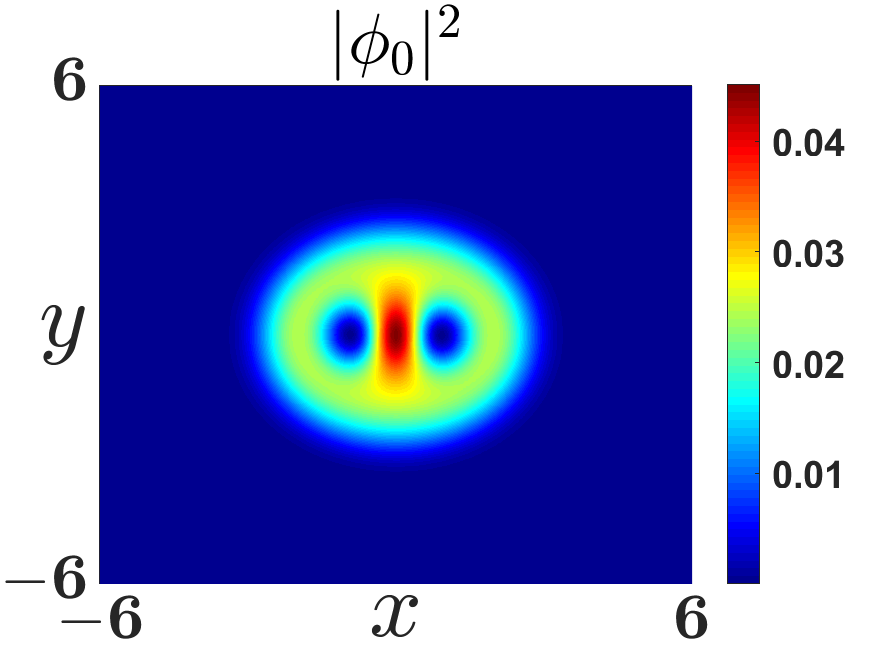}
 	\hspace{-0.22cm}
 	\includegraphics[scale=0.35]{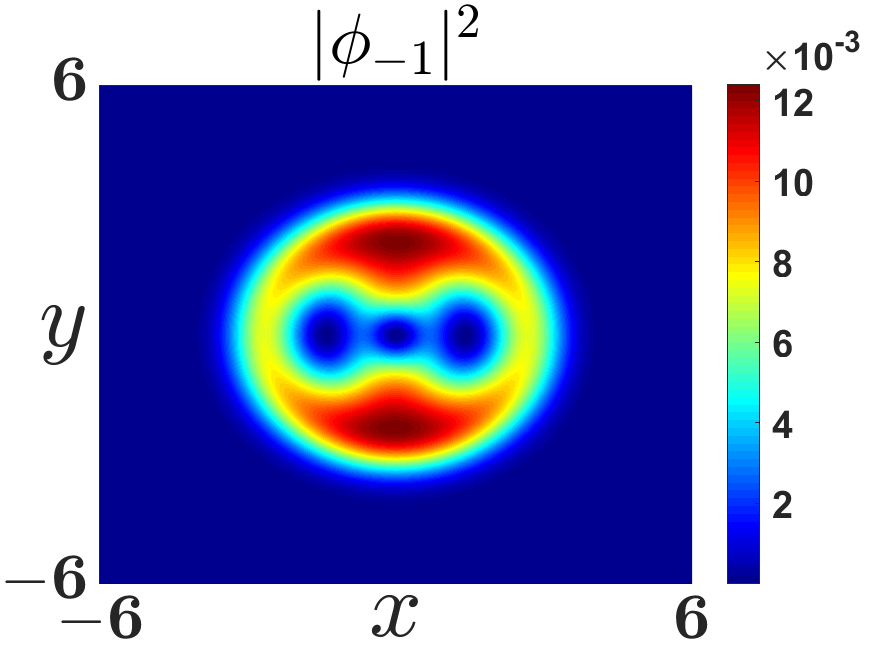}
 	\\
 	\centering
 	\includegraphics[scale=0.35]{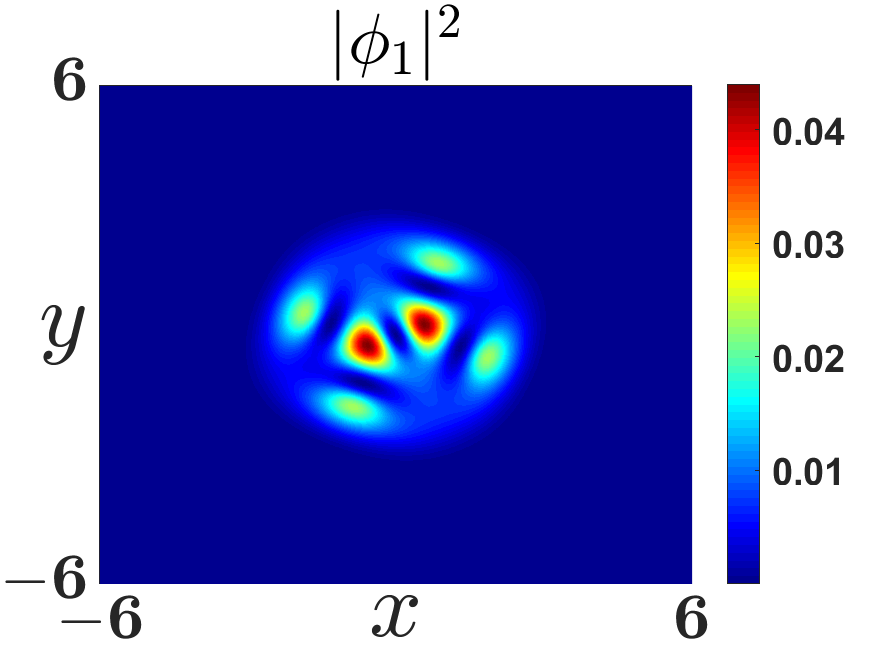}
 	\hspace{-0.22cm}
 	\includegraphics[scale=0.35]{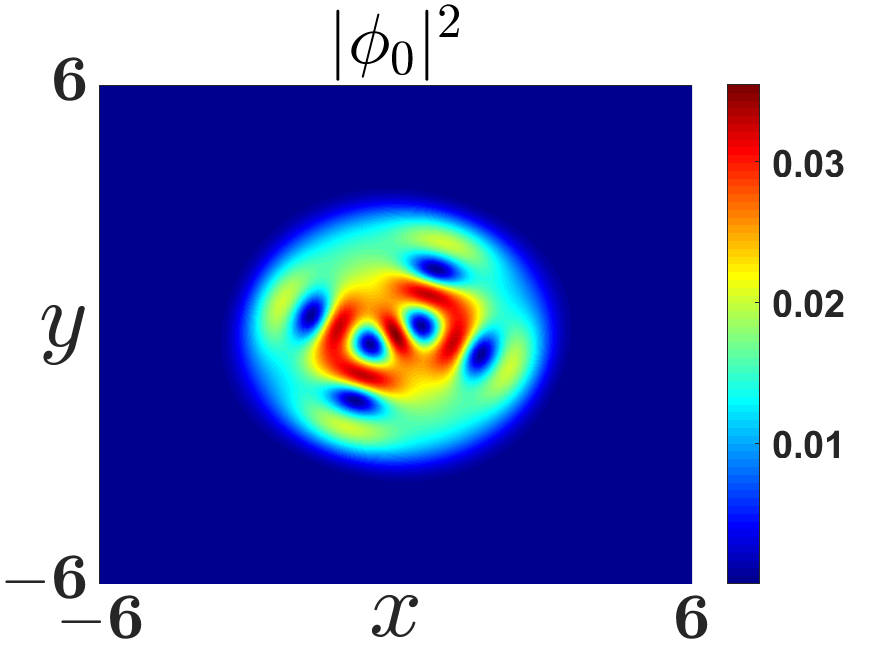}
 	\hspace{-0.22cm}
 	\includegraphics[scale=0.35]{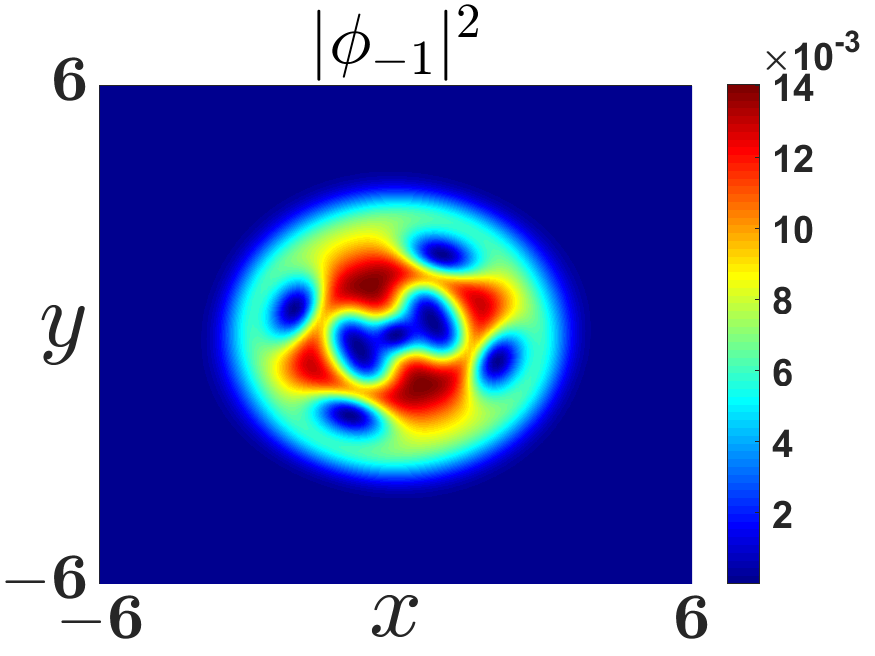}
 	\\
 	\centering
 	\includegraphics[scale=0.35]{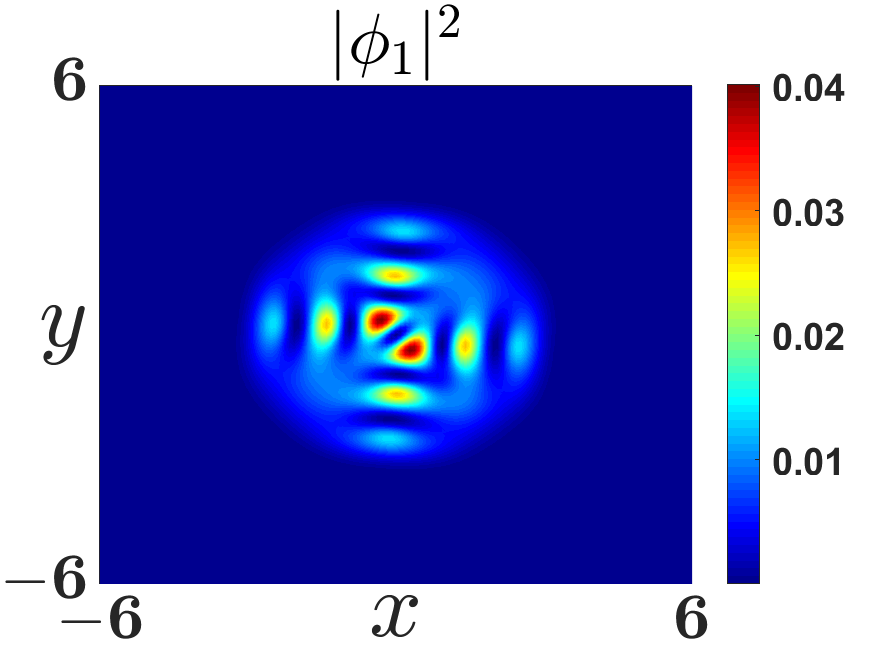}
 	\hspace{-0.22cm}
 	\includegraphics[scale=0.35]{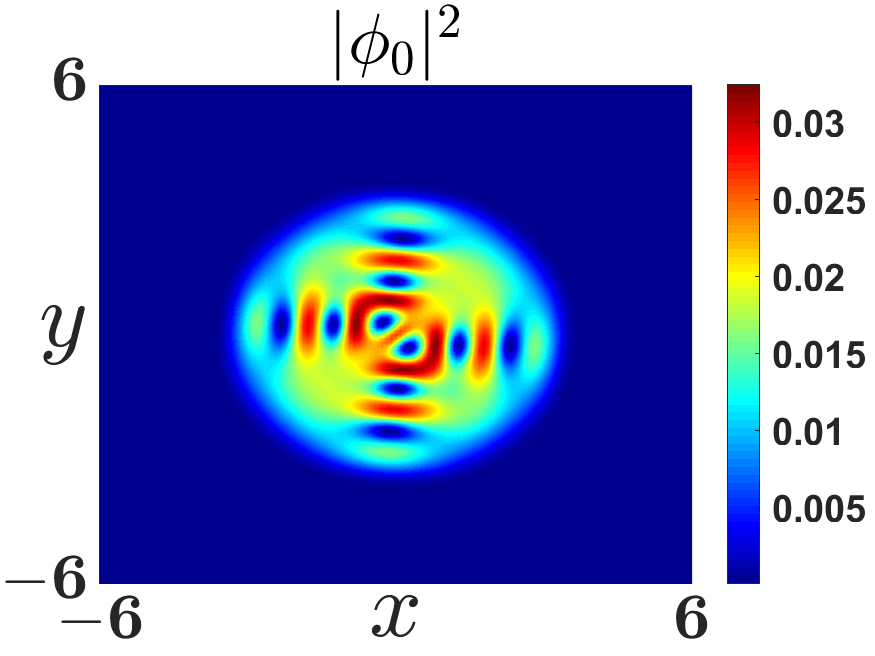}
 	\hspace{-0.22cm}
 	\includegraphics[scale=0.35]{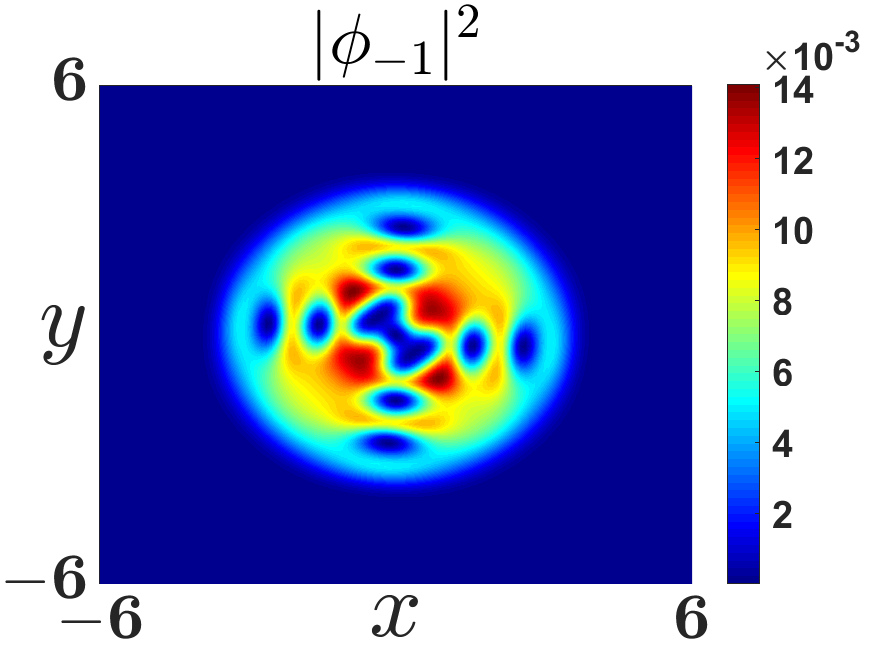}
 	\\
 	\caption{Contour plots of the densities for different SOC strength $\gamma$ (from top to bottom, $\gamma = 0.8, 1.6, 2.4$) in \textbf{Example \ref{Effect-SOC}} to investigate the effect of SOC.}
 	\label{fig_soc}
 \end{figure}
 
 We set $\mathcal{D} = [-12,12]^2$ with $N = 256$. Fig.~\ref{fig_soc} displays the density plots when $\gamma$ is equal to $0.8$, $1.6$, $2.4$, respectively. From Fig.~\ref{fig_soc}, we clearly observe that, as the SOC strength $\gamma$ increases, the number of vortices increases, and the shapes of vortices transition from circular to strip-shaped.

 \begin{exmp}\label{3d-result} 
 	In the example, we simulate some ground states in 3D isotropic/anisotropic harmonic potential. Here, we take $c_1 = 1$ and test the following six cases: 
 	\begin{flalign*}
 		&\textbf{ Case 1. } \gamma_x = \gamma_y =\gamma_z = 1, c_0 = 100, \Omega=0.6 \text{ and } \gamma=0.4. && \\ 
 		&\textbf{ Case 2. } \gamma_x = \gamma_y =\gamma_z = 1, c_0 = 100, \Omega=0.6 \text{ and } \gamma=0.6. && \\
 		&\textbf{ Case 3. } \gamma_x = \gamma_y =\gamma_z = 1, c_0 = 100, \Omega=0.8 \text{ and } \gamma=0.4. && \\
 		&\textbf{ Case 4. } \gamma_x = 0.97, \gamma_y = 1.03, \gamma_z = 0.05, c_0=3000, \Omega=0.5 \text{ and } \gamma=0.4. && \\ 
 		&\textbf{ Case 5. } \gamma_x = 0.97, \gamma_y = 1.03, \gamma_z = 0.05, c_0=3000, \Omega=0.5 \text{ and } \gamma=0.8. && \\
 		&\textbf{ Case 6. } \gamma_x = 0.97, \gamma_y = 1.03, \gamma_z = 0.05, c_0=3000, \Omega=0.7 \text{ and } \gamma=0.4. &&
 	\end{flalign*}
 \end{exmp}
 
 \begin{figure}[h!]
 	\centering
 	\includegraphics[scale=0.35]{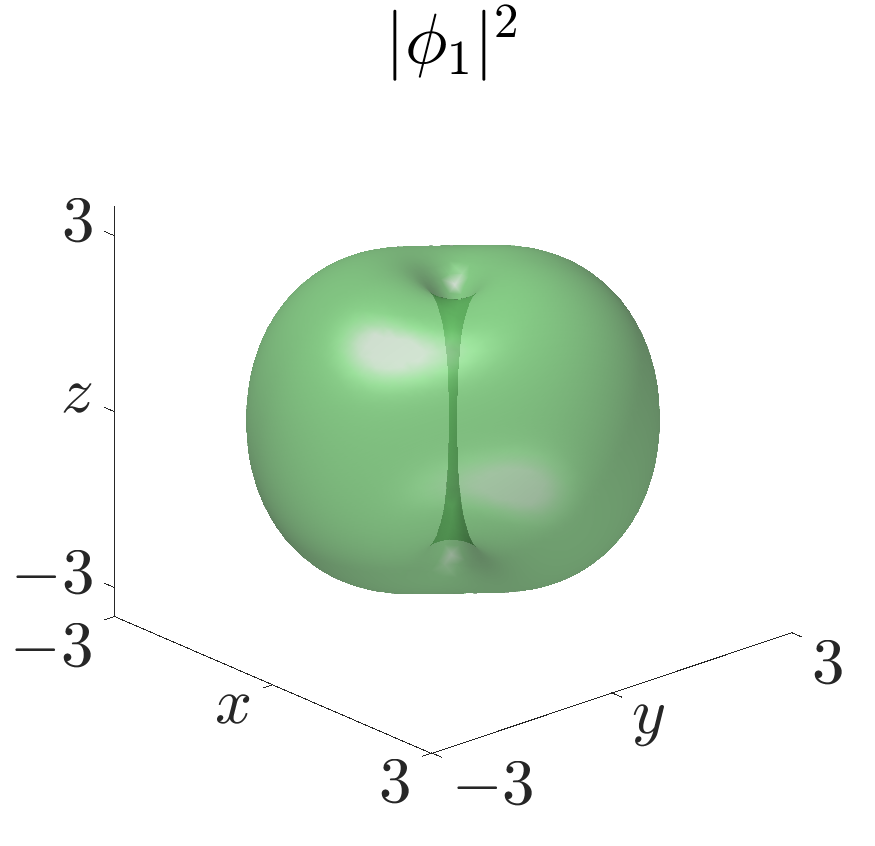}
 	\hspace{-0.22cm}
 	\includegraphics[scale=0.35]{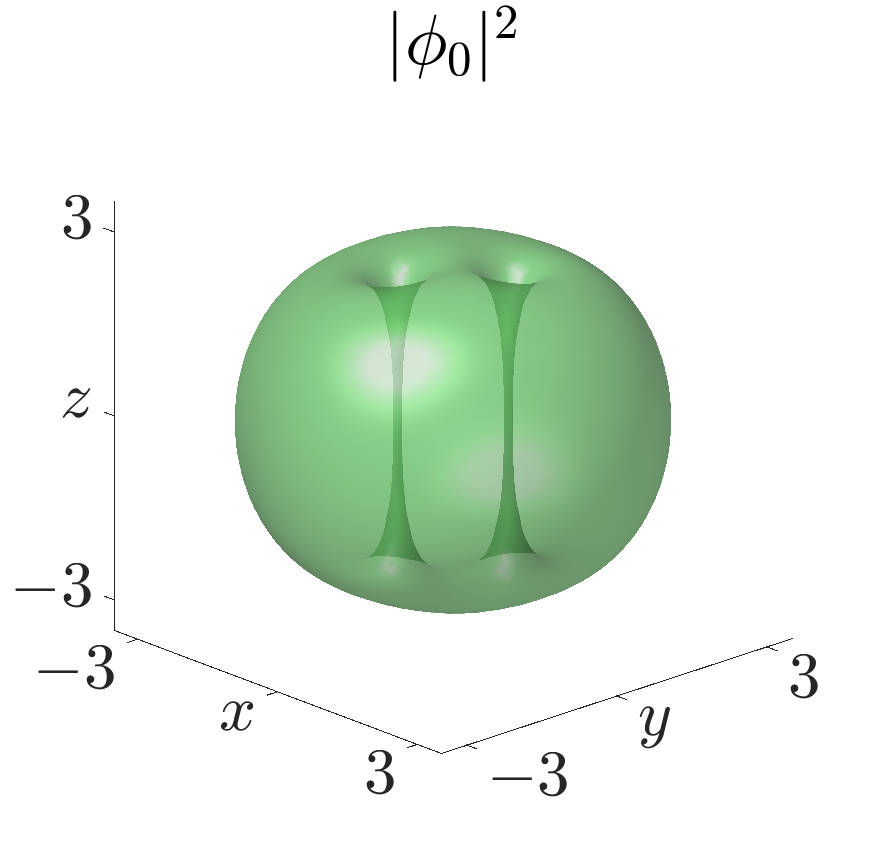}
 	\hspace{-0.22cm}
 	\includegraphics[scale=0.35]{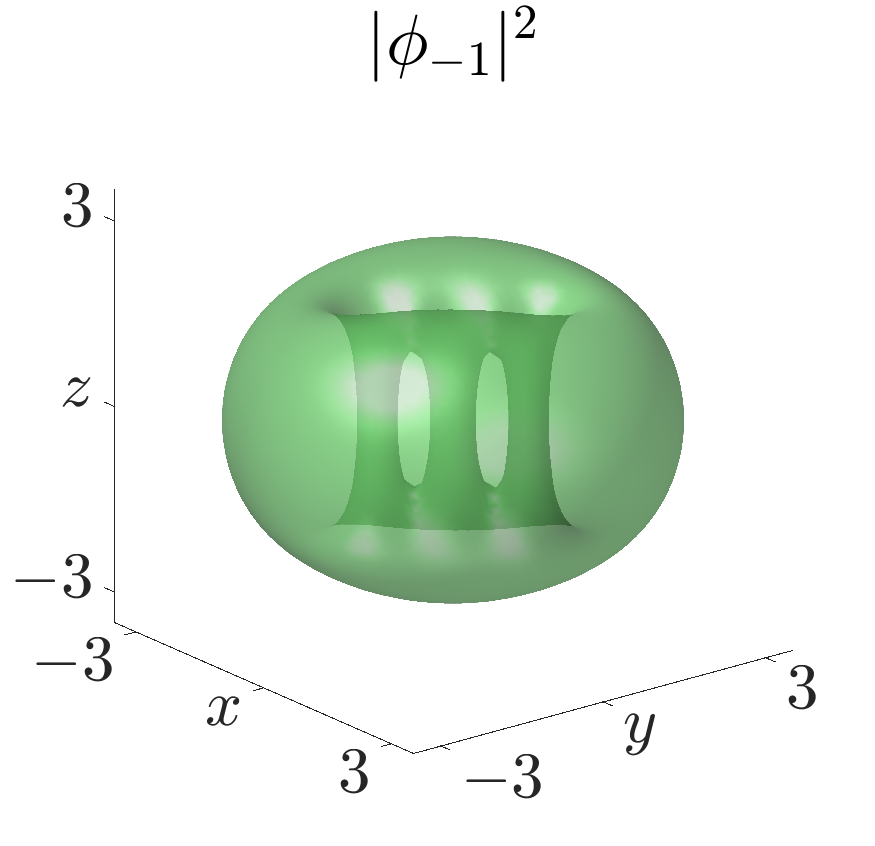}
 	\\
 	\centering
 	\includegraphics[scale=0.35]{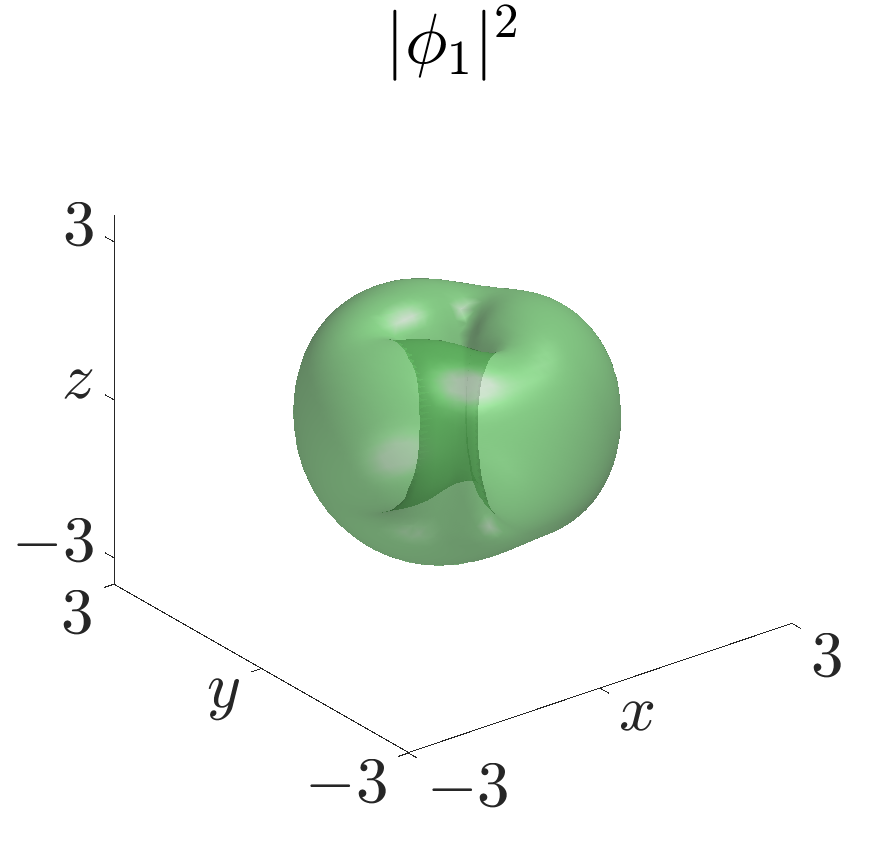}
 	\hspace{-0.22cm}
 	\includegraphics[scale=0.35]{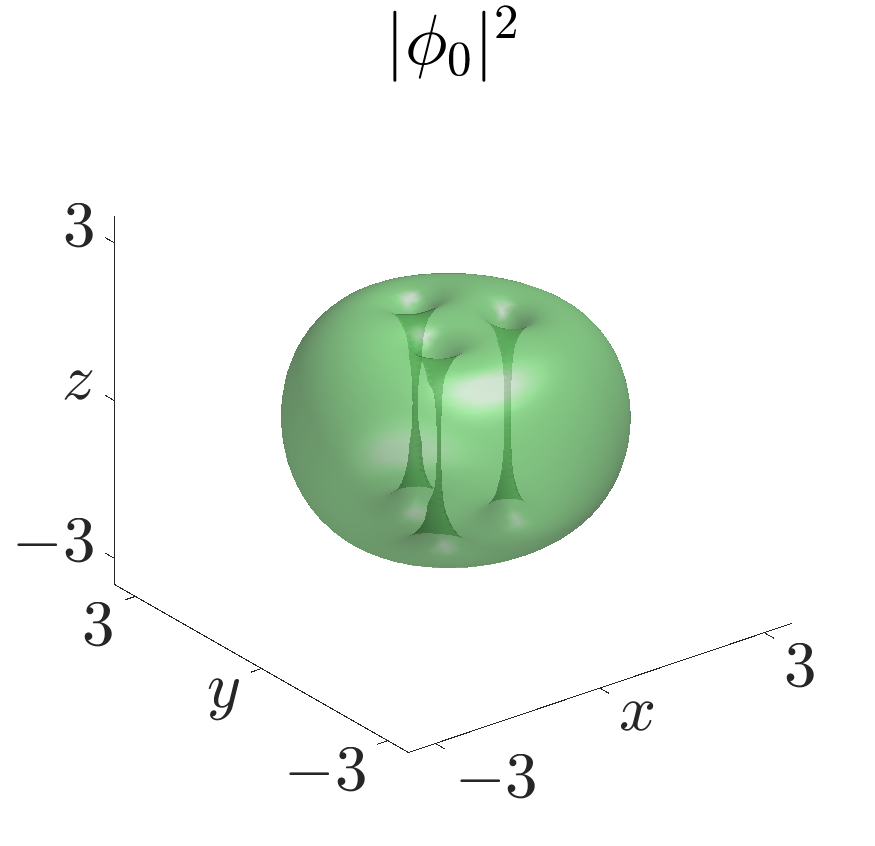}
 	\hspace{-0.22cm}
 	\includegraphics[scale=0.35]{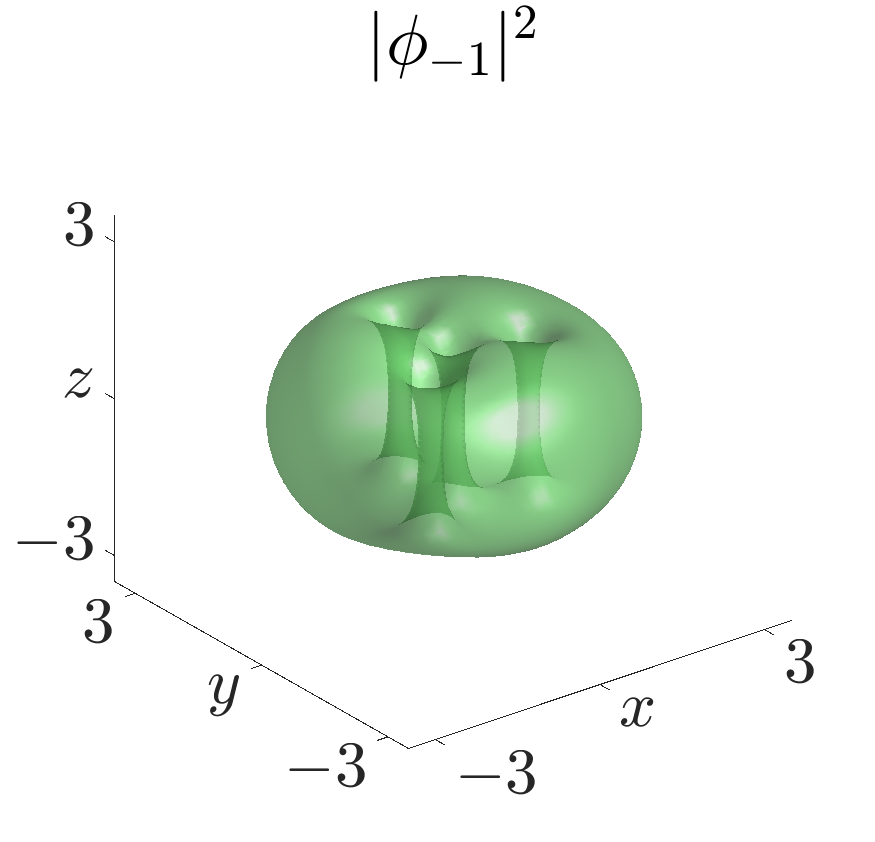}
 	\\
 	\centering
 	\includegraphics[scale=0.35]{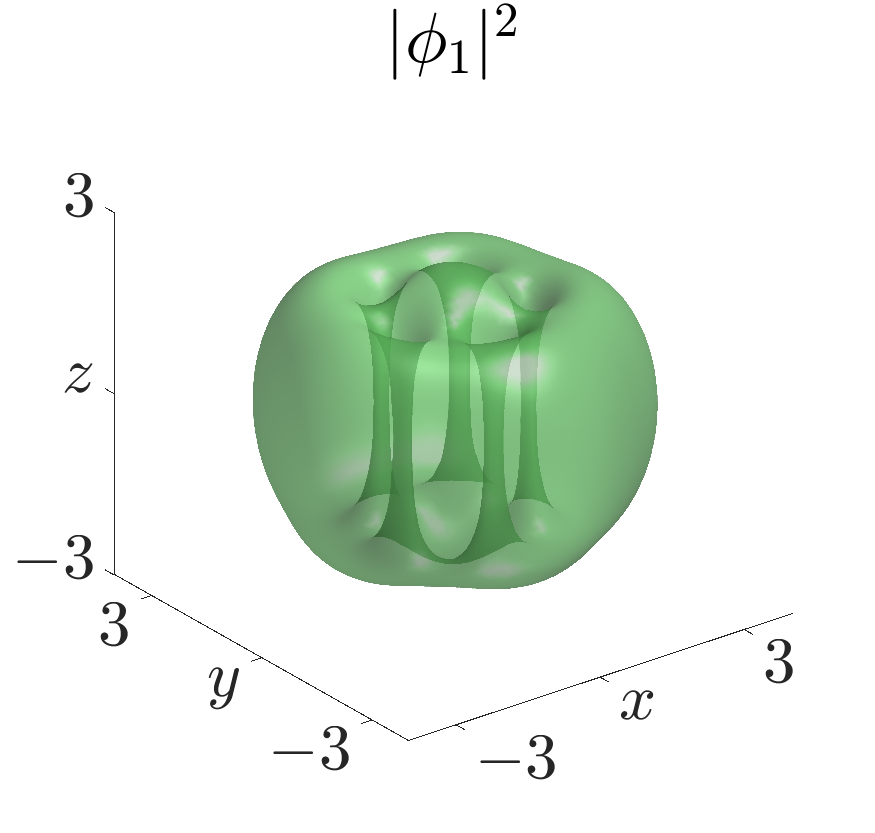}
 	\hspace{-0.22cm}
 	\includegraphics[scale=0.35]{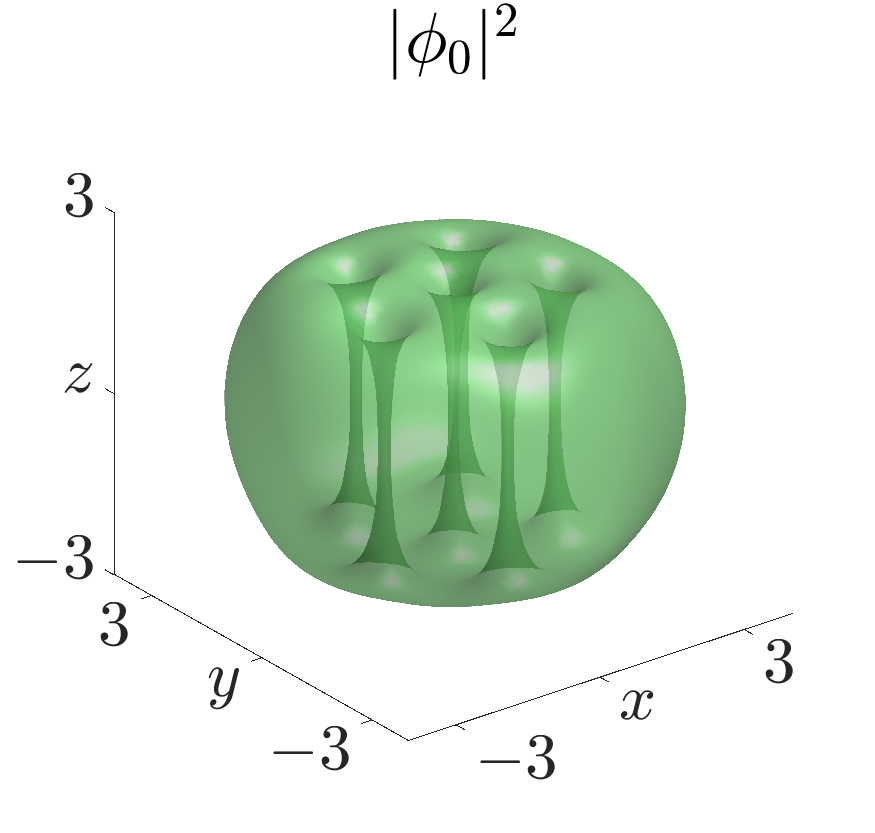}
 	\hspace{-0.22cm}
 	\includegraphics[scale=0.35]{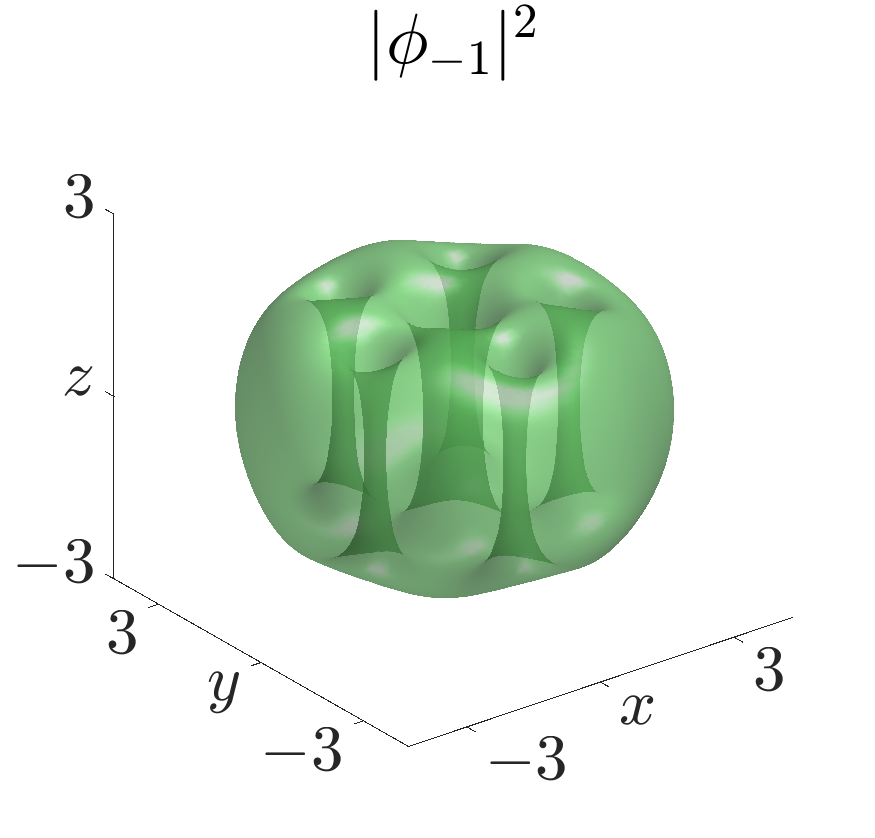}
 	\\
 	\caption{The isosurfaces $\vert\phi_{\ell}^{\rm g}\vert^2 = 10^{-4}$ for \textbf{Case 1, 2, 3} (top to bottom) in \textbf{Example \ref{3d-result}} in 3D isotropic harmonic potential.}
 	\label{fig_isosurface1}
 \end{figure}
 
 \begin{figure}[h!]
 	\centering
 	\includegraphics[scale=0.35]{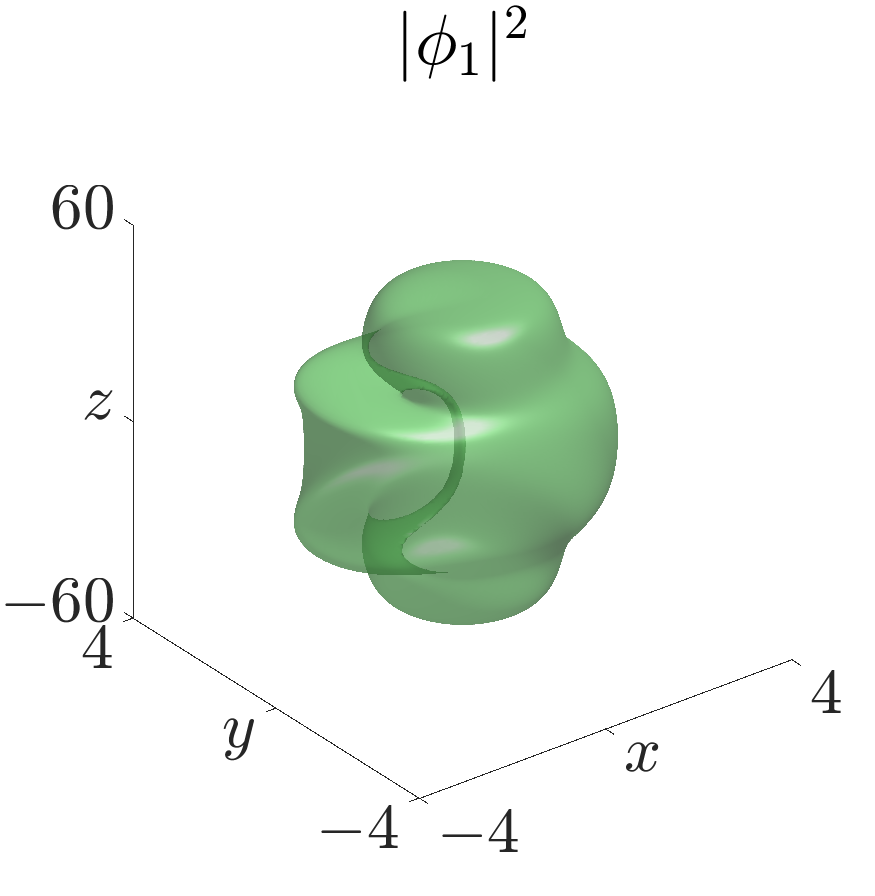}
 	\hspace{-0.22cm}
 	\includegraphics[scale=0.35]{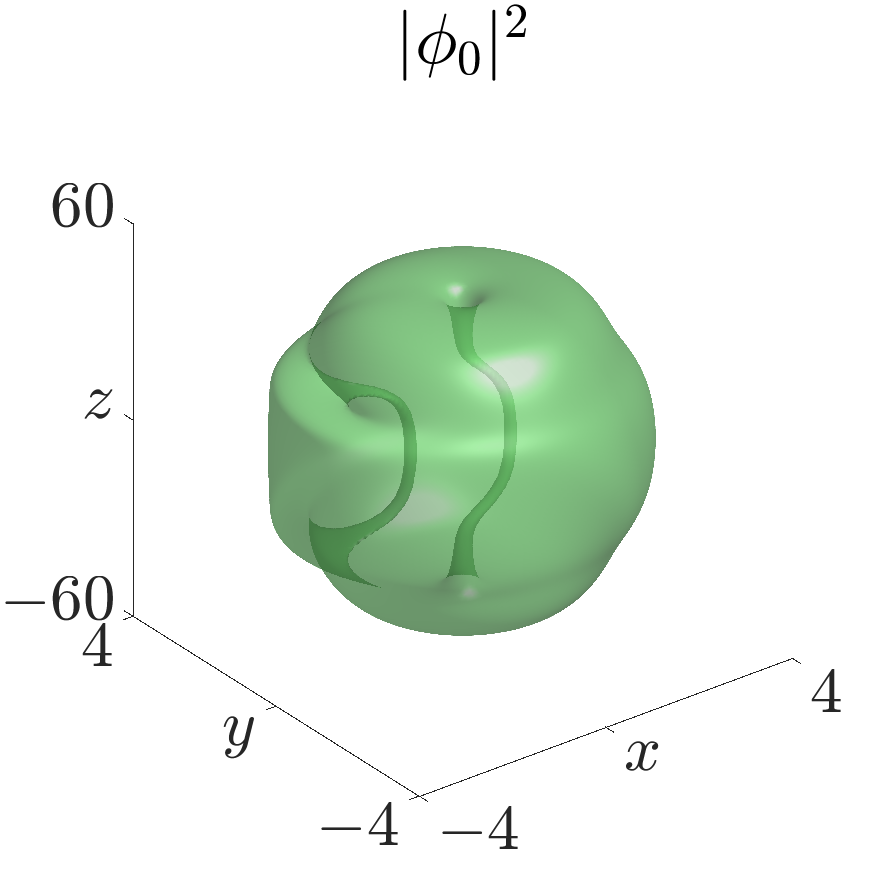}
 	\hspace{-0.22cm}
 	\includegraphics[scale=0.35]{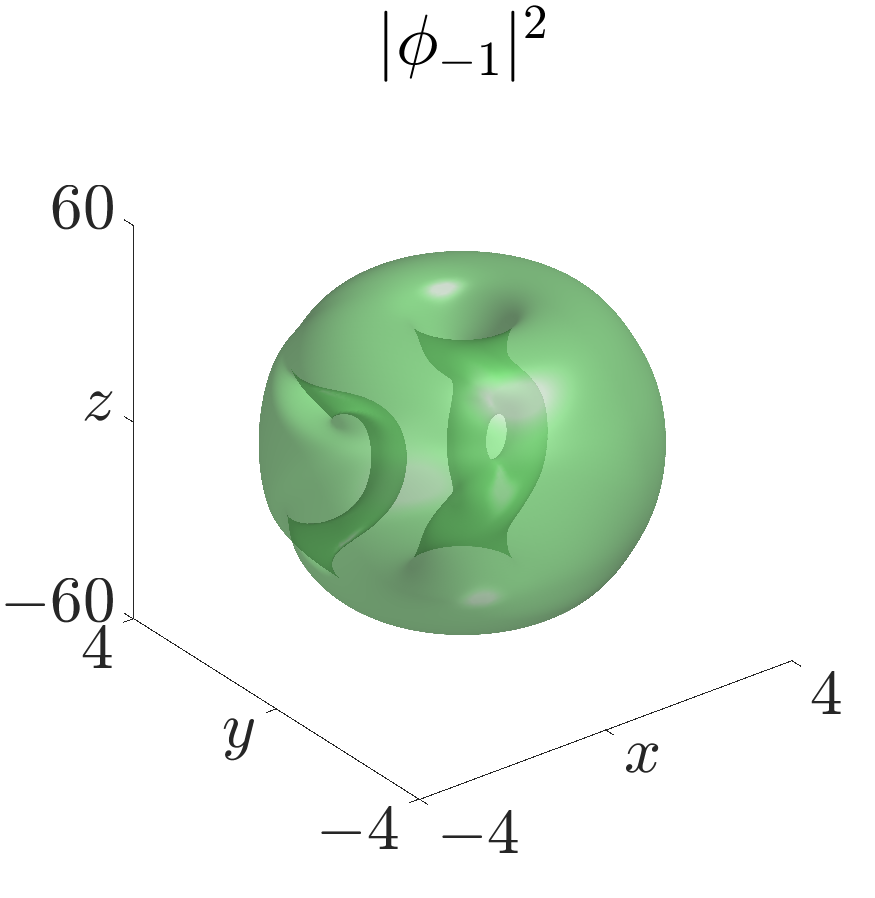}
 	\\
 	\centering
 	\includegraphics[scale=0.35]{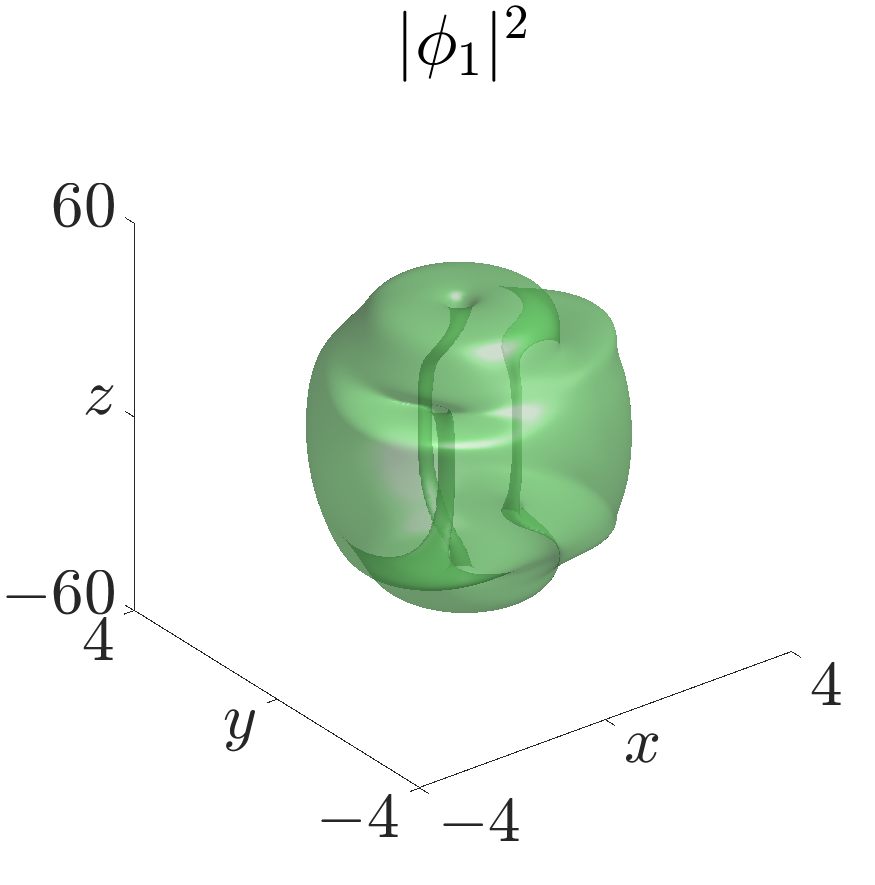}
 	\hspace{-0.22cm}
 	\includegraphics[scale=0.35]{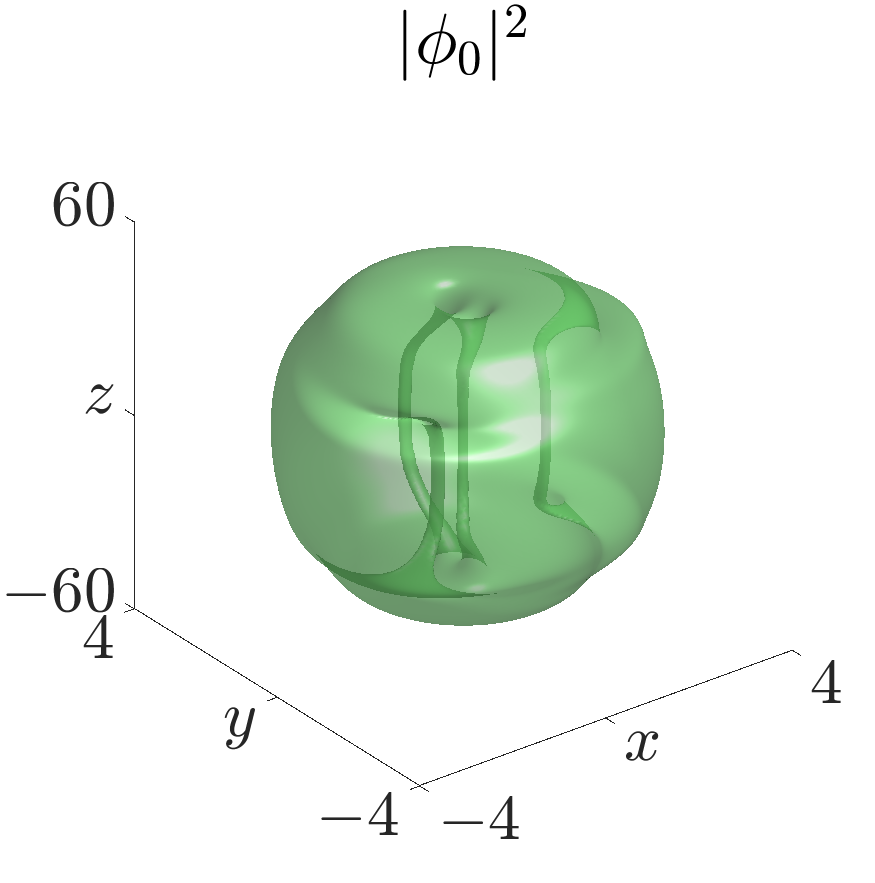}
 	\hspace{-0.22cm}
 	\includegraphics[scale=0.35]{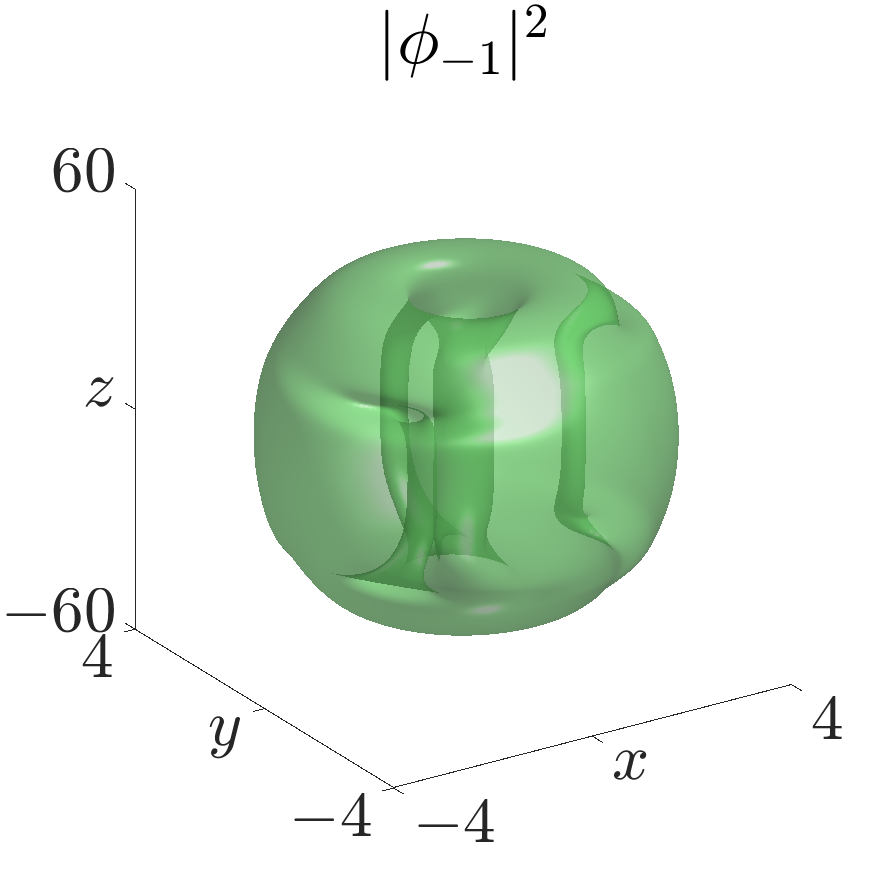}
 	\\
 	\centering
 	\includegraphics[scale=0.35]{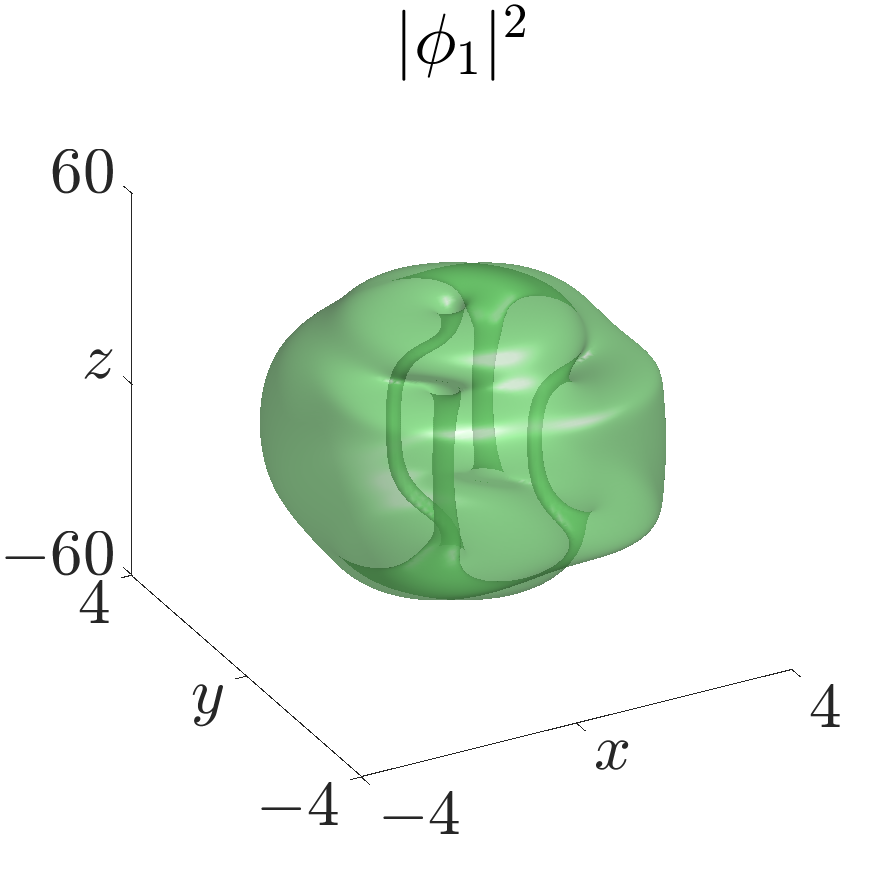}
 	\hspace{-0.22cm}
 	\includegraphics[scale=0.35]{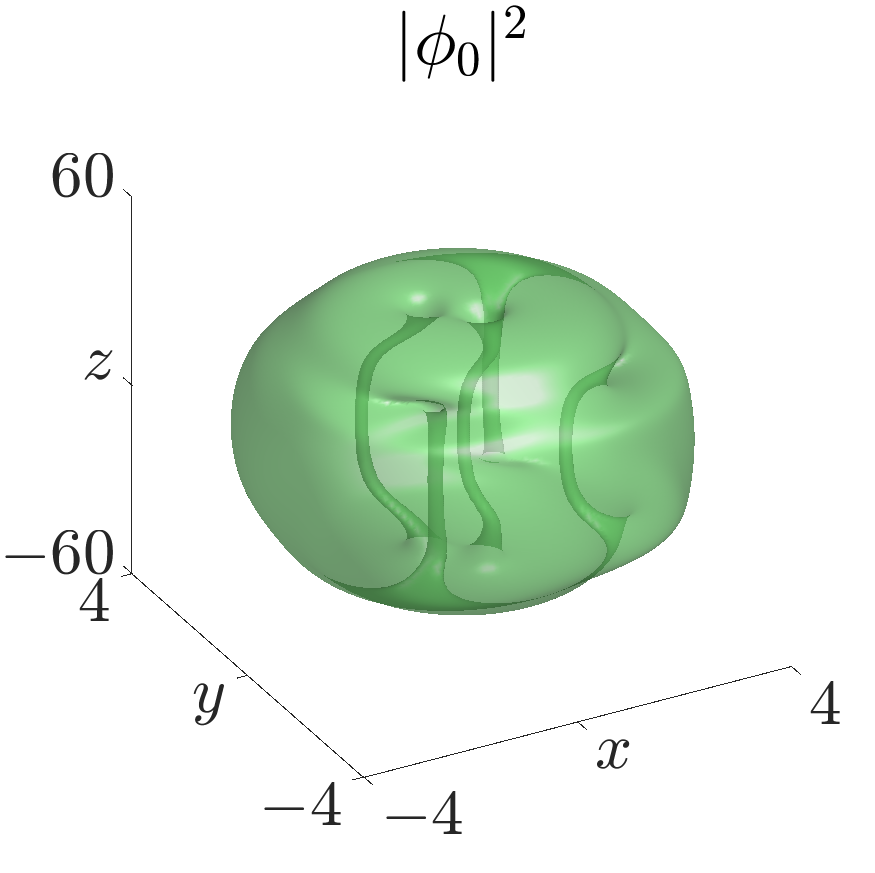}
 	\hspace{-0.22cm}
 	\includegraphics[scale=0.35]{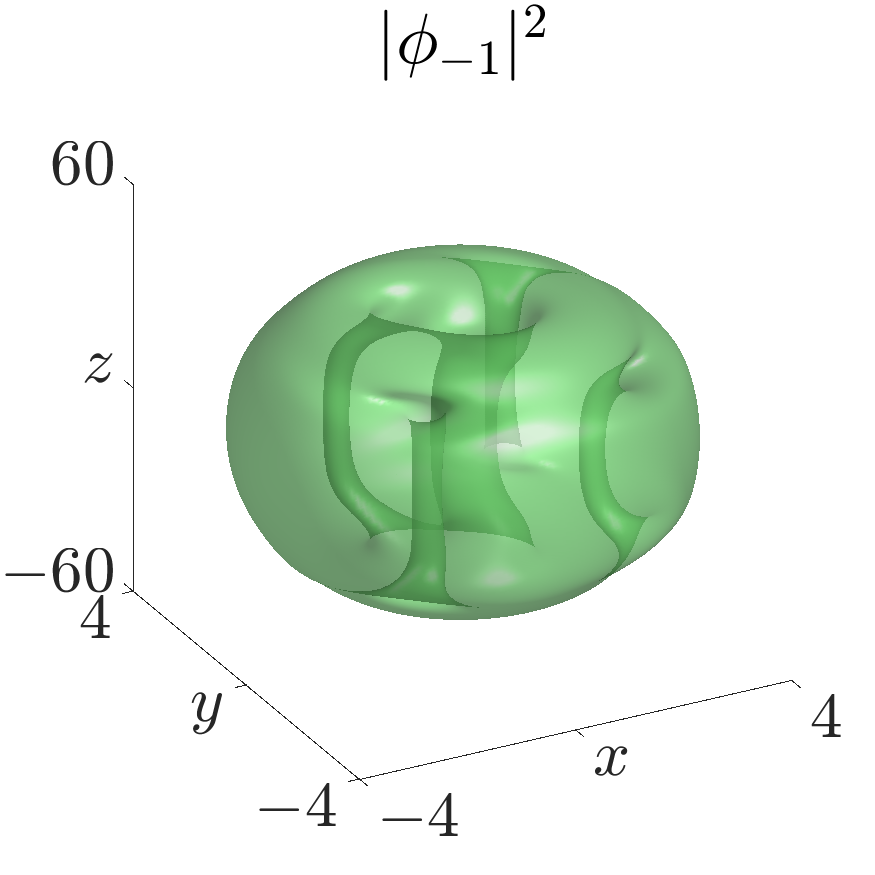}
 	\caption{The isosurfaces $\vert\phi_{\ell}^{\rm g}\vert^2 = 2 \times 10^{-5}$ for \textbf{Case 4, 5, 6} (top to bottom) in \textbf{Example \ref{3d-result}} in 3D anisotropic harmonic potential.}
 	\label{fig_isosurface2}
 \end{figure}
 
 For \textbf{Case 1-3}, the computational domain $\mathcal{D} = [-10,10]^3$, and for \textbf{Case 4-6}, $\mathcal{D} = [-10,10]^2 \times [-80,80]$. We apply Algorithm \ref{algorithm3} with the coarsest grid size $N = 64$ and the finest one $N = 256$.
 Fig.~\ref{fig_isosurface1} and Fig.~\ref{fig_isosurface2} depict the isosurfaces $\vert\phi_{\ell}^{\rm g}\vert^2 = 1 \times 10^{-4}$ $(\ell = 1, 0, -1)$ for the \textbf{Case 1-3} in isotropic harmonic potential and $\vert\phi_{\ell}^{\rm g}\vert^2 = 2 \times 10^{-5}$ $(\ell = 1, 0, -1)$ for the \textbf{Case 4-6} in anisotropic harmonic potential, respectively. From the two figures, we conclude that the number of vortex lines rises as the rotational speed $\Omega$ or SOC strength $\gamma$ increases. Furthermore, the vortex lines remain parallel to the axis of rotation in the isotropic harmonic potential. However, when $\gamma_x/\gamma_z$ and $\gamma_y/\gamma_z$ turn large enough in the anisotropic harmonic potential, the vortex lines are bent and will possess a U shape.

 \section{Conclusion}\label{conclude}
 In this paper, we first study properties of the ground state for the rotating spin-orbit coupled spin-1 BEC, including the existence, virial identity, and negativity of the spin-orbit coupling energy. Then, we propose an efficient and accurate preconditioned nonlinear conjugate gradient method to compute the ground states. 
 Our algorithm is spectrally accurate and performs with great efficiency, especially for the fast-rotating or strongly repulsive interaction, by virtue of the adaptive step size control mechanism and the well-designed preconditioners. The efficiency is further enhanced by cascadic multigrid strategy. Finally, through extensive numerical experiments, we validate the spatial spectral accuracy by traversing the most commonly-used initial guesses for each component, and investigate the effects of local interactions, rotation, spin-orbit coupling, and external trapping potential on the ground states. Furthermore, we unveil some interesting structures of the ground states, such as giant vortex and U-shape vortex line, which, we believe, may inspire more interesting follow-up research such as the ground state patterns.
 
 \section*{Acknowledgements}
This work was partially supported by the National Key R\&D Program of China No. 2024YFA1012803 and basic research fund of Tianjin University under grant 2025XJ21-0010 (W. Yang and Y. Zhang), by the National Natural Science Foundation of China No. 12471375, 11971007 (Y. Yuan) and No. 12271400 (Y. Zhang).

 \end{document}

%% file: command.tex
%%%%%%%%%%%%%%%%% author macros %%%%w%%%%%%%%%
\newcommand{\be}{\begin{equation}}
\newcommand{\ee}{\end{equation}}
\newcommand{\ba}{\begin{array}}
\newcommand{\ea}{\end{array}}
\newcommand{\bea}{\begin{eqnarray}}
\newcommand{\eea}{\end{eqnarray}}
\newcommand{\beas}{\begin{eqnarray*}}
\newcommand{\eeas}{\end{eqnarray*}}

\newcommand{\fl}[2]{\frac{#1}{#2}}

\newcommand{\bx}{{\mathbf x}}
\newcommand{\by}{{\mathbf y}}
\newcommand{\bz}{{\mathbf z}}
\newcommand{\bn}{{\mathbf n}}
\newcommand{\br}{{\mathbf r}}
\newcommand{\bp}{{\mathbf p}}
\newcommand{\im}{\textrm i}

\newcommand{\bu}{{\textbf u}}
\newcommand{\bv}{{\textbf v}}
\newcommand{\bbf}{{\textbf f}}
\newcommand{\bbg}{{\textbf g}}
\newcommand{\bh}{{\textbf h}}

\newcommand{\Th}{{\mathcal T}_{\bf h}}

\newcommand{\dif}{{\textrm d}}

\newcommand{\gm}{{\gamma}}

\newcommand\bl{{\textbf B}_\textbf{L}}
\newcommand{\bk}{{\textbf k}}
\newcommand{\bR}{{\textbf R}}
\newcommand{\hh}{\widetilde{h}}
\renewcommand{\theequation}{\arabic{section}.\arabic{equation}} %
\newtheorem{exmp}{Example}
\newtheorem{remark}{Remark}[section]
\newtheorem{prop}{Proposition}
\newtheorem{thm}{Theorem}
\newtheorem{lem}{Lemma}

\DeclareRobustCommand{\rchi}{{\mathpalette\irchi\relax}}
\newcommand{\irchi}[2]{\raisebox{\depth}{$#1\chi$}} % inner command, used by \rchi